\date{\today}
  \tikzstyle{block}=[draw, rectangle, rounded corners, text centered, minimum height=10mm, node distance=8em, text width=9em]
\tikzstyle{line}=[draw,<->, thick]
\newcommand{\xn}{(x_n)_n}
\newcommand{\yn}{(y_n)_n}
\newcommand{\A}{\textsf{A}}
\newcommand{\la}{\langle}
\newcommand{\ra}{\rangle}
\newcommand{\R}{\textup{R}}
\newcommand{\N}{\textup{N}}
\newcommand{\B}{\mathbf{B}}
\theoremstyle{theorem}
\newtheorem{theorem}{Theorem}[section]
\newtheorem{cor}[theorem]{Corollary}
\newtheorem{prop}[theorem]{Proposition}
\newtheorem{lemma}[theorem]{Lemma}
\theoremstyle{remark}
\newtheorem{remark}[theorem]{Remark}
\newtheorem{example}[theorem]{Example}
\newtheorem{definition}[theorem]{Definition}
\date{\today}
\title{Frames and outer frames for Hilbert $C^*$-modules}
\author[Lj. Aramba\v si\' c]
{Ljiljana Aramba\v si\' c}
\author[D. Baki\' c]
{Damir Baki\' c$^{*}$}
\thanks{$^{*}$ Corresponding author}
\address{Department of Mathematics, University of Zagreb,
Bijeni\v cka cesta 30, 10000 Zagreb, Croatia.}
\email{arambas@math.hr}
\email{bakic@math.hr}
\begin{document}

\begin{abstract}
The goal of the present paper is to extend the theory of frames for countably generated Hilbert $C^*$-modules over arbitrary $C^*$-algebras. In investigating the non-unital case we introduce the concept of outer frame as a sequence in the multiplier module $M(X)$ that has the standard frame property when applied to elements of the ambient module $X$. Given a Hilbert $\A$-module $X$, we prove that there is a bijective correspondence of the set of all adjointable surjections from the generalized Hilbert space $\ell^2(\A)$ to $X$ and the set consisting of all both frames and outer frames for $X$. Building on a unified approach to frames and outer frames we then obtain new results on dual frames, frame perturbations, tight approximations of frames and finite extensions of Bessel sequences.
\end{abstract}

\subjclass[2010]{Primary 46L08; Secondary 42C15}

\keywords{Hilbert $C^*$-modules, frames}
\maketitle

\section{Introduction}

\vspace{.2in}

A Hilbert $C^*$-module over a $C^*$-algebra $\A$ is a right $\A$-module $X$ equipped with an $\A$-valued inner product $\la \cdot,\cdot \ra : X\times X \to \A$ such that $X$ is a Banach space with respect to the norm $\|x\|=\|\la x,x\ra\|^{\frac{1}{2}}.$ Recall that the inner product on $X$ has the properties
\begin{enumerate}
\item $\la x,x\ra \geq 0$;
\item $\la x,x\ra =0 \Leftrightarrow x=0$;
\item $\la x,y+z\ra=\la x,y\ra+\la x,z\ra$;
\item $\la x,ya\ra=\la x,y\ra a$;
\item $\la x,y\ra=\la y,x\ra^*$;
\end{enumerate}
that are satisfied for all $x,y,z\in X$ and $a\in \A.$

A Hilbert $\A$-module $X$ is said to be full if the closed linear span of the set $\{\la x,y\ra : x,y\in X\}$ is all of $\A.$ We say that $X$ is countably generated if there exists a sequence $\xn$ in $X$ such that the closed linear span of the set $\{x_na:n\in \Bbb N, a\in \A\}$ is equal to $X.$ A subclass AFG consists of algebraically finitely generated Hilbert $\A$-modules, i.e.,  those $X$ for which there exists a finite sequence $(x_n)_{n=1}^N$ such that $X=\{\sum_{n=1}^Nx_na_n:a_n\in \A\}.$
The most important examples of Hilbert $C^*$-modules over a $C^*$-algebra $\A$ are:
\begin{itemize}
\item $X=\A$ with the inner product $\la a,b\ra=a^*b$;
\item $X=\A^N=\A \oplus \ldots \oplus \A,$ $N\in \Bbb N,$ ($N$  copies of $\A$) with the inner product $\la (a_1,\ldots,a_N), (b_1,\ldots,b_N)\ra=\sum_{n=1}^Na_n^*b_n$;
\item $X=\ell^2(\A)$ - the generalized Hilbert space over $\A.$ Recall that $\ell^2(\A)$ consists of all sequences $(a_n)_n$ of elements of $\A$ such that the series $\sum_{n=1}^{\infty}a_n^*a_n$ converges in norm, and the inner product on $\ell^2(\A)$ is defined by $\la (a_n)_n,(b_n)_n\ra=\sum_{n=1}^{\infty}a_n^*b_n.$ Given $a\in \A,$ we shall denote by $a^{(n)}\in \ell^2(\A)$ the sequence $(0,\ldots,0,a,0,\ldots)$ with $a$ on the $n$-th position and zeros elsewhere.
\end{itemize}

If $X$ and $Y$ are Hilbert $\A$-modules we denote by $\Bbb B(X,Y)$ the Banach space of all adjointable operators from $X$ to $Y.$ Given an adjointable operator $T$ we denote by $\R(T)$ and $\N(T)$ the range and the null-space of $T,$ respectively.

For $x\in X$ and $y\in Y$ let $\theta_{y,x}\in \Bbb B(X,Y)$ denote the map defined by $\theta_{y,x}(z)=y\la x,z\ra,\,z\in X.$ The linear span of all $\theta_{y,x}$'s is denoted by $\Bbb F(X,Y),$ while its closure is denoted by $\Bbb K(X,Y).$ These two classes of adjointable operators are referred to as the classes of "finite rank" and generalized compact operators, respectively. In the case $Y=X$ we simply write $\Bbb F(X),$ $\Bbb K(X)$ and $\Bbb B(X).$

For basic facts on Hilbert $C^*$-modules we refer the reader to  \cite{L,MT,RW,W-O}.

\begin{definition}\label{very first fd}
Let $X$ be a Hilbert $C^*$-module. A (possibly finite) sequence $(x_n)_n$ in $X$ is called a \emph{frame} for $X$ if there exist positive constants $A$ and $B$ such that
\begin{equation}\label{frame definition}
A \la x,x\ra \le \sum_{n=1}^{\infty}\la x,x_n\ra \la x_n,x\ra \le B \la x,x\ra,\quad\forall x\in X,
\end{equation}
where the sum in the middle converges in norm.
If only the second inequality in (\ref{frame definition}) is satisfied, we say that $(x_n)_n$ is a \emph{Bessel sequence}.
The constants $A$ and $B$ are called \emph{frame bounds}. If $A=B=1,$ i.e.,  if
\begin{equation}\label{Parseval}
\sum_{n=1}^{\infty}\la x,x_n\ra \la x_n,x\ra = \la x,x\ra,\quad\forall x\in X,
\end{equation}
the sequence $(x_n)_n$ is called a \emph{Parseval frame} for $X.$
\end{definition}

\vspace{.1in}

Notice that when we take for the underlying $C^*$-algebra of coefficients the field of complex numbers, i.e.,  when $X$ is a Hilbert space, (\ref{frame definition}) becomes
$$
A\|x\|^2\leq \sum_{n=1}^{\infty}|\langle x_n,x\rangle|^2\leq B\|x\|^2,\quad\forall x\in X,
$$
which means that $\xn$ is a standard Hilbert space frame.

Frames for Hilbert spaces were introduced by R.J.~Duffin and A.C.~Schaeffer in 1952. In 1980's frames begun to play an important role in wavelet and Gabor analysis. Since then, frames are an important tool in both theoretical and applied mathematics. Frames for Hilbert $C^*$-modules were introduced by M.~Frank and D.~Larson; the basic modular frame theory is developed in \cite{FL1, FL2,FL3}. In particular, it was proved in Example~3.5 in \cite{FL2} that frames exist in every finitely or countably generated Hilbert $C^*$-module. The proof is based on the Kasparov stabilization theorem (\cite{W-O}, Theorem~15.4.6).

\vspace{.1in}

In the rest of this introductory section we summarize basic facts concerning modular frames.

\vspace{.1in}

First, observe that we do not require in Definition~\ref{very first fd} that $X$ is a full Hilbert $\A$-module. Also, the underlying $C^*$-algebra $\A$ may be non-unital. When this is the case we can consider the minimal unitization $\tilde{\A}$ of $\A$ whose elements we write in the form $a+\lambda e,$ $a\in \A,$ $\lambda \in \Bbb C,$ where $e$ denotes the unit in $\tilde A.$  It is well known that $X$ can be regarded as a Hilbert $\tilde{\A}$-module with the same inner product and the action given by $x(a+\lambda e)=xa+\lambda x$ for all $x\in X,$ $a\in \A,$ and $\lambda \in \Bbb C.$ However, one should keep in mind that $X$ is never full over $\tilde{\A}.$

Secondly, we point out that we assume the norm-convergence of the series in (\ref{frame definition}). Since in each $C^*$-algebra a convergent series of positive elements necessarily converges unconditionally, we could index our frame by any countable set instead of $\Bbb N.$ Observe also that the unconditional convergence of the series in (\ref{frame definition}) implies that the family $\{\la x,x_n\ra \la x_n,x\ra : n\in \Bbb N\}$ is summable in $\A$ for each $x$ in $X.$

Given a frame $\xn$ for a Hilbert $C^*$-module $X,$ we define the analysis operator $U:X  \to \ell^2(\A)$ (resp.~$U:X  \to \A^N$ if $(x_n)_{n=1}^N$ is a finite frame) by
$$Ux=(\la x_n,x\ra)_n.$$
It is well known that $U$ is an adjointable map and that the adjoint operator $U^*$ - that is called the synthesis operator - is given by $$U^*((a_n)_n)=\sum_{n=1}^{\infty}x_na_n,\quad \textup{resp. } U^*((a_n)_{n=1}^N)=\sum_{n=1}^{N}x_na_n.$$
In particular, if $\A$ is unital, we have $U^*e^{(n)}=x_n,$ $n\in \Bbb N.$ Here and in the sequel we denote by $e$ the unit element in a unital $C^*$-algebra.

Furthermore, the above defining condition (\ref{frame definition}) implies that $U$ is bounded from below; hence, $\R(U)$ is a closed submodule of $\ell^2(\A).$ Using Corollary~15.3.9 from \cite{W-O} we now have $\ell^2(\A)=\R(U)\oplus\N(U^*).$ Moreover, by \cite[Theorem~15.3.8]{W-O}, the range of $U^*$ is also closed and since $U$ is, being bounded from below, an injection, we conclude that $U^*$ is surjective. These properties imply that $U^*U$ is an invertible operator in $\Bbb B(X).$

In particular, note that the analysis operator $U$ of a Parseval frame $\xn$ is an isometry; hence, when this is the case we have $U^*U=I,$ where $I$ is the identity operator on $X.$

\vspace{.1in}

We are now ready to state a result that provides the most fundamental property of modular frames. This was first proved by M.~Frank and D.~Larson for Hilbert $C^*$-modules over unital $C^*$-algebras, but it is easily seen that the result extends to the non-unital case too.

\begin{theorem}\label{full reconstruction}
Let  $(x_n)_n$ be a frame for a Hilbert $C^*$-module $X$ with the analysis operator $U.$ Then
\begin{equation}\label{canonical reconstruction}
x=\sum_{n=1}^{\infty}x_n\la (U^*U)^{-1}x_n,x\ra=\sum_{n=1}^{\infty}(U^*U)^{-1}x_n\la x_n,x\ra,\quad\forall x\in X.
\end{equation}
In particular, $(x_n)_n$ is a Parseval frame for $X$ if and only if
\begin{equation}\label{Parseval reconstruction}
x=\sum_{n=1}^{\infty}x_n\la x_n,x\ra,\quad\forall x\in X.
\end{equation}
\end{theorem}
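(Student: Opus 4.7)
The plan is to identify the expression $U^*Uy = \sum_{n=1}^\infty x_n\la x_n, y\ra$ and then substitute $y = (U^*U)^{-1}x$, which is legitimate because the excerpt has already established that $U^*U \in \B(X)$ is invertible.

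First I would compute $U^*U$ explicitly. For any $y\in X$ we have $Uy = (\la x_n,y\ra)_n \in \ell^2(\A)$, and the formula for the synthesis operator recalled just before the statement gives
\[
U^*Uy \;=\; U^*\bigl((\la x_n,y\ra)_n\bigr) \;=\; \sum_{n=1}^\infty x_n \la x_n,y\ra,
\]
with norm-convergence of the partial sums (this is what makes $U$ adjointable in the first place, so no extra work is needed). Substituting $y=(U^*U)^{-1}x$ yields $x=\sum_{n=1}^\infty x_n\la x_n,(U^*U)^{-1}x\ra$. Since $(U^*U)^{-1}$ is self-adjoint (as the inverse of a positive invertible operator in $\B(X)$), I can move it across the inner product to get $\la x_n,(U^*U)^{-1}x\ra=\la (U^*U)^{-1}x_n,x\ra$, which is the first displayed equality in (\ref{canonical reconstruction}). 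For the second equality I would instead factor from the left: $x=(U^*U)^{-1}U^*Ux=(U^*U)^{-1}\sum_{n=1}^\infty x_n\la x_n,x\ra$. The series on the right converges in norm and $(U^*U)^{-1}$ is bounded, so it can be pulled inside the sum, giving $x=\sum_{n=1}^\infty (U^*U)^{-1}x_n\la x_n,x\ra$.

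For the Parseval characterization, the forward direction is immediate: if $\xn$ is Parseval then $U$ is an isometry, so $U^*U=I$, and (\ref{canonical reconstruction}) collapses to (\ref{Parseval reconstruction}). For the converse, assume (\ref{Parseval reconstruction}) holds for every $x\in X$. Applying $\la x, \cdot \ra$ to both sides and using norm-convergence together with continuity of the inner product in the second variable gives
\[
\la x,x\ra \;=\; \Bigl\la x,\sum_{n=1}^\infty x_n\la x_n,x\ra\Bigr\ra \;=\; \sum_{n=1}^\infty \la x,x_n\ra\la x_n,x\ra,
\]
which is exactly (\ref{Parseval}); so $\xn$ is a Parseval frame.

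There is no serious obstacle here: the content of the theorem really is the invertibility of $U^*U$, which was already derived from the Kasparov-type decomposition $\ell^2(\A)=\R(U)\oplus \N(U^*)$ combined with boundedness-below of $U$, and both of those facts have been cited in the excerpt. The only mild point worth being careful about is the passage from the non-unital setting back to the working formula for $U^*$; but since $U^*((a_n)_n)=\sum_n x_na_n$ was stated without any unitality assumption, the argument above goes through uniformly in both cases.
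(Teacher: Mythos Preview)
Your argument is correct and is exactly the standard derivation. Note that the paper does not supply its own proof of this theorem: it records the statement as known, attributing it to Frank and Larson in the unital case and remarking that the non-unital case follows in the same way. Your write-up is precisely the routine verification the paper alludes to, using only the facts about $U$, $U^*$, and the invertibility of $U^*U$ that are established in the introduction.
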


\vspace{.1in}

Given a frame $\xn$ for a Hilbert $C^*$-module $X,$ each sequence $(y_n)_n$ that satisfies the equality
$$x=\sum_{n=1}^{\infty}x_n\la y_n,x\ra,\quad \forall x\in X,$$
is called a dual of $\xn.$ In general, a frame $\xn$ may posses many duals. The first equality in (\ref{canonical reconstruction}) tells us that the sequence $((U^*U)^{-1}x_n)_n$ is a dual of $\xn.$ This sequence is called the canonical dual frame; namely, $((U^*U)^{-1}x_n)_n$ is indeed a frame for $X$ since $(U^*U)^{-1}$ is an adjointable surjection \cite[Theorem~2.5]{A}.

Observe also that the first equality in (\ref{canonical reconstruction}) shows that each frame $\xn$ for $X$ generates $X.$ In particular, a Hilbert $C^*$-module that admits frames is necessarily countably generated. Analogously, each Hilbert $C^*$-module that possesses a finite frame is an AFG Hilbert $C^*$-module.

Theorem~\ref{full reconstruction} is a natural generalization of the corresponding result for Hilbert spaces. However, further properties of modular frames cannot be derived by simply following the frame theory for Hilbert spaces. To see what the obstacles are, let us first note two basic facts concerning frames in Hilbert spaces.

\begin{remark}\label{what fails}
Let $H$ be a Hilbert space.
\begin{itemize}
\item[(a)] Suppose that $\xn$ is a sequence in $H$ such that the series $\sum_{n=1}^{\infty}c_nx_n$ converges for each sequence $(c_n)_n\in \ell^2(\Bbb C).$ Then $\xn$ is a Bessel sequence (\cite[Corollary~3.2.4]{C}).
\item[(b)] Each bounded surjection $T\in \Bbb B(\ell^2(\Bbb C),H)$ is the synthesis operator of some frame for $H$ (\cite[Theorem~5.5.5]{C}).
\end{itemize}
\end{remark}

Unfortunately, both statements from the preceding remark can fail if the complex field is replaced by a general $C^*$-algebra; that is, the above statements are not generally true for Hilbert $C^*$-modules. This is demonstrated by two examples that follow. First we need a lemma.

\begin{lemma}\label{tko konvergira}
Let $\A$ be a $C^*$-algebra and let $(t_n)_n$ be a sequence in the multiplier algebra $M(\A)$ of $\A.$ The following two statements are equivalent:
\begin{enumerate}
\item[(a)] The series $\sum_{n=1}^{\infty}t_na_n$ is norm-convergent for each $(a_n)_n\in \ell^2(\A).$
\item[(b)] The sequence $(\sum_{n=1}^Nt_nt_n^*)_N$ is bounded.
\end{enumerate}
\end{lemma}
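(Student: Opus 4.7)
The plan is to package each partial sum into a single adjointable operator on $\ell^2(\A)$ and then exploit the $C^*$-identity $\|T\|^2=\|TT^*\|$; the equivalence then reduces to a short tail estimate in one direction and the Banach--Steinhaus theorem in the other.

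For each $N\in \Bbb N$, I would define $S_N: \ell^2(\A)\to \A$ by $S_N((a_n)_n)=\sum_{n=1}^N t_na_n$. Since each $t_n$ lies in $M(\A)$, the product $t_na_n$ belongs to $\A$, and the (finite) formula for $S_N$ makes it manifestly $\A$-linear and bounded. A direct computation shows that $S_N$ is adjointable with $S_N^*b=(t_1^*b,t_2^*b,\ldots,t_N^*b,0,0,\ldots)$, and hence
\[
S_NS_N^*b = \Big(\sum_{n=1}^N t_n t_n^*\Big)b, \qquad b\in \A.
\]
Since left multiplication on $\A$ realises the $M(\A)$-norm, this yields the key identity
\[
\|S_N\|^2 \,=\, \|S_NS_N^*\| \,=\, \Big\|\sum_{n=1}^N t_n t_n^*\Big\|.
\]

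For (b)$\Rightarrow$(a), I assume $K:=\sup_N\|\sum_{n=1}^N t_nt_n^*\|<\infty$, so that $\|S_N\|\le K^{1/2}$ for every $N$. Let $P_M$ denote the coordinate projection of $\ell^2(\A)$ onto the indices strictly greater than $M$. For $M<N$ and $(a_n)_n\in\ell^2(\A)$,
\[
\Big\|\sum_{n=M+1}^N t_na_n\Big\| \,=\, \|S_N(P_M((a_n)_n))\| \,\le\, K^{1/2}\,\|P_M((a_n)_n)\|,
\]
and the right-hand side tends to $0$ as $M\to\infty$, independently of $N$. Hence the partial sums form a Cauchy sequence in $\A$ and converge in norm.

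For (a)$\Rightarrow$(b) I would invoke Banach--Steinhaus: for every fixed $(a_n)_n\in \ell^2(\A)$ the sequence $(S_N((a_n)_n))_N$ converges, hence is bounded, so the uniform boundedness principle gives $\sup_N\|S_N\|<\infty$. Squaring and using the identity above yields (b). The main obstacle is essentially verification-level: confirming that the finite-sum maps $S_N$ are genuinely adjointable (which is precisely what makes $M(\A)$ the natural receptacle for the sequence $(t_n)_n$) and that $S_NS_N^*$ factors through left multiplication by the partial sum $\sum_{n=1}^N t_nt_n^*$. Once that identification is in place, both directions collapse to standard Hilbert $C^*$-module arguments.
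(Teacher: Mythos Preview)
Your proof is correct and follows the same overall architecture as the paper's: both introduce the finite-sum operators $S_N=T_N\in\Bbb B(\ell^2(\A),\A)$, verify their adjointability, use the uniform boundedness principle for (a)$\Rightarrow$(b), and a Cauchy tail estimate for (b)$\Rightarrow$(a). The one noteworthy difference is in (a)$\Rightarrow$(b): the paper argues by estimating $\big\|\sum_{n=1}^N t_nt_n^*a\big\|=\|T(T_N^*a)\|$ and then bootstrapping the inequality $\big\|\sum_{n=1}^N t_nt_n^*\big\|\le M\big\|\sum_{n=1}^N t_nt_n^*\big\|^{1/2}$, whereas you obtain the bound in one stroke via the $C^*$-identity $\|S_N\|^2=\|S_NS_N^*\|$ together with the identification of $S_NS_N^*$ with left multiplication by $\sum_{n=1}^N t_nt_n^*$ (using that $M(\A)\to\Bbb B(\A)$ is isometric). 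Your route is cleaner and avoids the auxiliary limit operator $T$; the paper's computation is more explicit but arrives at exactly the same constant $M^2$.
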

\begin{proof}
Let us assume (a). Then $T:\ell^2(\A) \to \A,$ $T((a_n)_n)=\sum_{n=1}^{\infty}t_na_n$ is a well defined $\A$-linear operator, where $\A$ is regarded as a Hilbert $C^*$-module over itself. Consider also, for each $N\in \Bbb N,$ the operators $T_N:\ell^2(\A) \to \A,$ $T_N((a_n)_n)=\sum_{n=1}^{N}t_na_n.$ Observe that $T_N$'s are adjointable operators; their adjoints are given by $T_N^*a=(t_1^*a,\ldots,t_N^*a,0,0,\ldots),$ $a\in \A.$ In particular, all $T_N$ are bounded. Obviously, the sequence $(T_N)_N$ converges to $T$ in the strong operator topology. By the uniform boundedness principle there exists a positive constant $M$ such that $\|T_N\|\le M$ for all $N \in \Bbb N,$ and $\|T\| \leq M.$

Recall now that for each $t\in M(\A)$ we have
$$
\|t\|=\sup \{\|ta\|: a\in \A, \|a\|\leq 1\}.
$$
Fix $N\in \Bbb N.$ Then for every $a\in \A$ it holds
$$
\Big\|\sum_{n=1}^Nt_nt_n^*a\Big\|=\|T(T_N^*a)\|\leq
M\Big\| \sum_{n=1}^Na^*t_nt_n^*a\Big\|^{\frac{1}{2}}
\leq M\|a^*\|^{\frac{1}{2}}\Big\| \sum_{n=1}^Nt_nt_n^*a\Big\|^{\frac{1}{2}}.
$$
By taking supremum on both sides over all $a\in \A,$ $\|a\|\leq 1,$ we get
$$
\Big\|\sum_{n=1}^Nt_nt_n^*\Big\|\leq M \Big\| \sum_{n=1}^Nt_nt_n^*\Big\|^{\frac{1}{2}},
$$
and hence
$$
\Big\|\sum_{n=1}^Nt_nt_n^*\Big\|\leq M^2.
$$

Let us now suppose (b), that is, let $\Big\|\sum_{n=1}^Nt_nt_n^*\Big\|\leq M$ for some $M>0$ and all $N \in \Bbb N.$ Take any sequence $(a_n)_n\in \ell^2(\A).$ For $\varepsilon >0$ we can find $N_0\in \Bbb N$ such that
$$
N_2\geq N_1\geq N_0 \Rightarrow \Big\|\sum_{n=N_1+1}^{N_2}a_n^*a_n\Big\|<\varepsilon.
$$
From this we conclude, for all $N_2\geq N_1\geq N_0,$
\begin{eqnarray*}
\Big\|\sum_{n=1}^{N_2}t_na_n-\sum_{n=1}^{N_1}t_na_n\Big\|^2&=&\Big\|\sum_{n=N_1+1}^{N_2}t_na_n\Big\|^2\\
  &\leq&\Big\|\sum_{n=N_1+1}^{N_2}t_nt_n^*\Big\|\,\Big\|\sum_{n=N_1+1}^{N_2}a_n^*a_n\Big\|\\
  &<&M\varepsilon.
\end{eqnarray*}
where the first inequality above is obtained by applying the Cauchy-Schwarz inequality in the Hilbert $C^*$-module $M(\A)^{N_2-N_1}.$ Thus, $(\sum_{n=1}^{N}t_na_n)_N$ is a Cauchy sequence, and hence convergent.
\end{proof}

\begin{example}\label{fail 1}
Here we demonstrate an example of a sequence $\xn$ in a Hilbert $C^*$-module $X$ over a $C^*$-algebra $\A$ such that the series $\sum_{n=1}^{\infty}x_na_n$ converges in $X$ for all
$(a_n)_n\in \ell^2(\A),$ but which is not Bessel.

Take an infinite dimensional separable Hilbert space $H$ with an orthonormal basis $(\epsilon_n)_n.$ For $n\in \Bbb N$ let $e_n$ denote the one-dimensional projection onto the subspace $\text{span}\,\{\epsilon_n\}$.

Consider $X=\Bbb B(H)$ as a Hilbert $C^*$-module over itself. Obviously, $\sum_{n=1}^Ne_ne_n^*$ is the orthogonal projection onto
$\textup{span}\,\{\epsilon_1,\ldots,\epsilon_N\}$; thus, the sequence $(\sum_{n=1}^Ne_ne_n^*)_N$ is bounded. By the preceding lemma, the series $\sum_{n=1}^{\infty}e_na_n$ converges for each sequence $(a_n)_n\in \ell^2(\Bbb B(H)).$ However, $(e_n)_n$ is not a Bessel sequence in $X.$ Namely, if it were Bessel, that would imply that the series $\sum_{n=1}^{\infty}\la a,e_n\ra \la e_n,a\ra=\sum_{n=1}^{\infty}a^*e_na$ converges in norm for each $a\in X.$ In particular, this norm-limit should coincide with the strong limit of the series $\sum_{n=1}^{\infty}a^*e_na$ which is, obviously, equal to $a^*a$.  But, this is impossible for each non-compact operator $a$ on $H.$ So, if we put $x_n=e_n, \,n\in \Bbb N$, the sequence $(x_n)_n$ has the desired properties.
\end{example}

\begin{remark}\label{Heuser}
A well-known result (the Heuser lemma) on square summable sequences of scalars states: if $(c_n)_n$ is a sequence of complex numbers such that the series
$\la (c_n)_n, (a_n)_n\ra=\sum_{n=1}^{\infty}c_na_n$ is convergent for each $(a_n)_n\in \ell^2(\Bbb C),$ then $(c_n)_n\in \ell^2(\Bbb C).$ The preceding example shows that an analogous result does not hold in the generalized Hilbert space $\ell^2(\A).$ Namely, the sequence $(x_n)_n$ from the preceding example has the property that the series
$\la (x_n)_n, (a_n)_n\ra=\sum_{n=1}^{\infty}x_na_n$ is convergent for each $(a_n)_n\in \ell^2(\B(H)),$ but $(x_n)_n$ does not belong to $\ell^2(\B(H)).$
\end{remark}

\begin{example}\label{fail 2}
Here we demonstrate an example of an adjointable surjection from $\ell^2(\A)$ to a Hilbert $\A$-module $X$ which is not the synthesis operator of any frame for $X.$

Consider an infinite dimensional separable Hilbert space $H$ such that $H=\bigoplus_{n=1}^{\infty}H_n,$ where $\textup{dim}\,H_n=\infty$ for each $n\in \Bbb N.$ Note that the elements of $H$ can be identified as sequences $(\xi_n)_n$ such that $\xi_n\in H_n,$ $n\in \Bbb N,$ and $\sum_{n=1}^{\infty}\|\xi_n\|^2<\infty.$ Let $X=\Bbb K(H),$ where $\Bbb K(H)$ denotes the $C^*$-algebra of all compact operators on $H.$

Let $s_n\in \Bbb B(H)$ denote the isometry with the final space $H_n$ for every $n\in \Bbb N.$ Observe that $s_n^*s_n=e$ ($e$ stands for the identity operator on $H$), while $s_ns_n^*=p_n,$ where $p_n$ denotes the orthogonal projection onto $H_n.$ Since the sequence $(\sum_{n=1}^Np_n)_N$ converges to $e$ in the strong operator topology, a standard argument shows that the sequence $(\sum_{n=1}^Np_na)_N$ converges in norm to $a$ for each compact operator $a.$ Thus, for each  $a\in \Bbb K(H)$ we have $\sum_{n=1}^{\infty}p_na=a$ in the sense of norm-convergence.

Consider now
$$T: \ell^2(\Bbb K(H)) \to \Bbb K(H),\quad T((a_n)_n)=\sum_{n=1}^{\infty}s_na_n.$$
By Lemma~\ref{tko konvergira}, $T$ is well defined. Moreover, $T$ is an adjointable operator; its adjoint $T^*$ is given by
$$T^*a=(\la s_n,a\ra)_n=(s_n^*a)_n.$$
Note that $T^*$ is well defined since we have, by the conclusion from the preceding paragraph, $$\sum_{n=1}^{\infty}a^*s_ns_n^*a=a^*\sum_{n=1}^{\infty}p_na=a^*a,\quad \forall a\in \Bbb K(H).$$
This also shows that $T^*$ is an isometry; hence, $T$ is a surjection.

We now claim that there does not exist a frame $\xn$ for $X=\Bbb K(H)$ whose synthesis operator is $T.$ To see this, suppose the opposite: let $\xn$ be a frame for $X$ such that $T((a_n)_n)=\sum_{n=1}^{\infty}x_na_n$ for each $(a_n)_n\in \ell^2(\Bbb K(H)).$ Then we have
$\sum_{n=1}^{\infty}x_na_n=\sum_{n=1}^{\infty}s_na_n$ for each $(a_n)_n\in \ell^2(\Bbb K(H)).$ In particular, if we take arbitrary $n\in \Bbb N,$ $a\in \Bbb K(H),$ and
$a^{(n)}\in \ell^2(\Bbb K(H)),$ we get $x_na=s_na$ for all $n\in \Bbb N$ and $a\in \Bbb K(H).$ Since $\Bbb K(H)$ acts non-degenerately on $H,$ this is enough to conclude $x_n=s_n$ for all $n\in \Bbb N.$ But this is obviously impossible since each $s_n$ is a non-compact operator.
\end{example}

As the above two examples show, both statements from Remark~\ref{what fails} can fail in Hilbert $C^*$-modules.
We shall address these problems more thoroughly at the beginning of Section 3 and in Section 5 in the discussion following Remark~\ref{konus}. We will show that in the study of Hilbert $C^*$-modules over non-unital $C^*$-algebras some difficulties arise from sequences and operators with properties as in the preceding two examples and that these difficulties cannot be circumvented by simply adjoining the unit element to the underlying $C^*$-algebra $\A$ and regarding the original Hilbert $C^*$-module $X$ as a module over the unital $C^*$-algebra $\tilde{\A}.$

\vspace{.1in}

The paper is organized as follows. In Section 2 we discuss further basic properties of frames. In particular, we describe in Proposition~\ref{parseval frames are approximate units} and Theorem~\ref{second construction method} the interrelation of Parseval frames for a Hilbert $C^*$-module $X$ with increasing approximate units for $\Bbb K(X).$

In Section 3 we introduce the concept of an outer frame for a Hilbert $C^*$-module $X,$ a concept that naturally fits into the picture when one studies frames for Hilbert $C^*$-modules over non-unital $C^*$-algebras.
We show in Theorem~\ref{stvarna korespondencija frameova i sinteza} that there is a bijective correspondence of the set of all adjointable surjections from the generalized Hilbert space $\ell^2(\A)$ to a Hilbert $\A$-module $X$ and the set consisting of all both frames and outer frames for $X$.

In Section 4 we describe all frames that are dual to a given frame. It turns out that in these considerations one has to take into account outer frames discussed in the preceding section.
In particular, we describe in Theorems~\ref{general dual} and \ref{general dual 1} (synthesis operators of) all frames and outer frames that are dual to a given frame or an outer frame. At the end of Section 4 we discuss frames and outer frames with a unique dual.

Section 5 is devoted to frame perturbations and tight approximations of frames. Again, outer frames naturally fit into the picture when discussing the non-unital case. After proving a perturbation result (Theorem~\ref{okolina1}), we obtain in Propositions~\ref{Parsevalov frame u okolini} and \ref{napeti frame u okolini} the best Parseval resp.~tight approximation of a frame or an outer frame in terms of the distance of the corresponding analysis/synthesis operators.

Finally, in the concluding Section 6 we investigate finite extensions of Bessel sequences to frames and outer frames. In Theorems~\ref{fin_ext_outer} and \ref{fin_ext_Pars_outer} we characterize those Bessel sequences that admit such extensions to frames.

\vspace{.1in}
Throughout the paper $\A$ will denote an arbitrary $C^*$-algebra. We do not assume that $\A$ is unital and this particular assumption will be explicitelly stated when needed. The multiplier algebra of $\A$ will be denoted by $M(\A).$
By an approximate unit for a $C^*$-algebra $\A$ we understand a net $(e_{\lambda})_{\lambda}$ of positive elements in the unit ball of $\A$ such that $\lim_{\lambda}e_{\lambda}a=a$, for all $a \in \textsf{A}.$ Approximate unit is increasing if $e_\lambda\le e_\mu$ whenever $\lambda \le \mu.$ Recall that a $C^*$-algebra $\A$ has a countable approximate unit precisely when it is $\sigma$-unital (i.e.,  when there exists a strictly positive element in $\A$).

Given a $C^*$-algebra $\A$ and the generalized Hilbert space  $\ell^2(\A)$ over $\A$, we denote by $c_{00}(\A)$ the set of all finite sequences in  $\ell^2(\A)$, i.e.
$$c_{00}(\A)=\{(a_n)_n:a_n \in \A, a_n=0, \,\forall n>N \mbox{ for some } N\in \Bbb N\}.$$ Clearly, $c_{00}(\A)$ is norm-dense in $\ell^2(\A)$.

We tacitly assume that the class of countably generated Hilbert $C^*$-modules includes all AFG Hilbert $C^*$-modules (and, obviously, when we work with finite frames for AFG modules the convergence questions become superfluous). We shall explicitelly indicate when a particular discussion is concerned with AFG modules exclusively.

\vspace{0.2in}

\section{Basic properties and characterizations}

\vspace{0.2in}

The frame condition (\ref{frame definition}) from Definition~\ref{very first fd} involves two inequalities concerning order in the underlying $C^*$-algebra that are not always easy to verify. However, it turns out that it suffices to check the corresponding inequalities in norm (\cite[Theorem~2.6]{A} and \cite[Proposition~3.8]{Jing}). In fact, as we shall see in our Theorem~\ref{frame relaxed} below, even more is true.
Our first theorem is concerned with Bessel sequences. We show that, in order to prove that a sequence $\xn$ in a Hilbert $\A$-module $X$ is Bessel, one has only to verify that the sequence $(\la x_n,x\ra)_n$ belongs to $\ell^2(\A)$ for each $x\in X.$

\begin{theorem}\label{Bessel relaxed}
Let $(x_n)_n$ be a sequence in a Hilbert $\A$-module $X.$ Then the following two conditions are equivalent:
\begin{itemize}
\item[(a)] $(x_n)_n$ is a Bessel sequence.
\item[(b)] The series $\sum_{n=1}^{\infty}\la x,x_n\ra \la x_n,x\ra$ converges for all $x$ in $X.$
\end{itemize}

If $(x_n)_n$ is a Bessel sequence, its analysis operator
$$U:X\to\ell^2(\A),\quad U(x)=(\la x_n,x\ra)_n,$$
is well defined and adjointable and the adjoint operator $U^*$ is given by
\begin{equation}\label{Bessel-adjoint}
U^*((a_n)_n)=\sum_{n=1}^{\infty}x_na_n,\quad \forall (a_n)_n\in\ell^2(\A),
\end{equation}
where the series $\sum_{n=1}^{\infty}x_na_n$ converges unconditionally for all $(a_n)_n \in \ell^2(\A).$ In particular, if $(e_{\lambda})_{\lambda}$ is an approximate unit for $\A,$ then $U^*e_{\lambda}^{(n)}=x_ne_{\lambda}$ and $\lim_{\lambda}U^*e_{\lambda}^{(n)}=x_n$ for each $n\in \Bbb N.$ Consequently, the sequence $(x_n)_n$ is bounded and $\|x_n\|\le \|U\|$ for all $n$ in $\Bbb N.$ Finally, if $\A$ is unital then $x_n=U^*e^{(n)}$ for all $n \in \Bbb N.$
\end{theorem}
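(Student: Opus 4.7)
The direction (a)$\Rightarrow$(b) is immediate since norm-convergence of the series is built into Definition~\ref{very first fd}. For (b)$\Rightarrow$(a), the plan is to construct the analysis operator, deduce its adjointability, and then read off the Bessel bound from $\|U\|^2$.

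First I would set $U(x):=(\la x_n, x\ra)_n$, which is well-defined into $\ell^2(\A)$ by hypothesis (b) and is clearly $\A$-linear. I would establish its boundedness via the closed graph theorem: if $y_k\to x$ in $X$ and $U(y_k)\to (a_n)_n$ in $\ell^2(\A)$, then coordinate-wise convergence in $\ell^2(\A)$ combined with continuity of the inner product forces $a_n=\la x_n,x\ra$ for every $n$, so $U(x)=(a_n)_n$.

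The heart of the argument is the adjointability of $U$, for which I need to show that for every $(a_n)_n\in\ell^2(\A)$ the series $\sum_{n=1}^{\infty}x_na_n$ is norm-convergent in $X$. Using the identity $\|y\|=\sup\{\|\la x,y\ra\|:\|x\|\le 1\}$ valid in every Hilbert $\A$-module, together with the Cauchy--Schwarz inequality applied in $\ell^2(\A)$ to the truncated sequences $(\la x_n,x\ra)_{N_1<n\le N_2}$ and $(a_n)_{N_1<n\le N_2}$, the partial sums are controlled by
\[
\Big\|\sum_{n=N_1+1}^{N_2}x_na_n\Big\|=\sup_{\|x\|\le 1}\Big\|\sum_{n=N_1+1}^{N_2}\la x,x_n\ra a_n\Big\|\le \|U\|\,\Big\|\sum_{n=N_1+1}^{N_2}a_n^*a_n\Big\|^{1/2},
\]
which tends to $0$. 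Applied to arbitrary finite index sets avoiding $\{1,\dots,N_0\}$, the same estimate yields unconditional convergence. The relation $\la U(x),(a_n)_n\ra=\la x,\sum x_na_n\ra$, which holds trivially on $c_{00}(\A)$, then extends by continuity to all of $\ell^2(\A)$, identifying the synthesis map with $U^*$ and establishing (\ref{Bessel-adjoint}).

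With $U$ adjointable, the Bessel inequality now follows from $U^*U\le \|U\|^2 I$:
\[
\sum_{n=1}^{\infty}\la x,x_n\ra\la x_n,x\ra=\la U(x),U(x)\ra\le \|U\|^2\la x,x\ra.
\]
The remaining assertions are formal: $U^*e_{\lambda}^{(n)}=x_ne_{\lambda}$ from the formula for $U^*$; $x_ne_\lambda\to x_n$ holds in any Hilbert $\A$-module for any approximate unit $(e_\lambda)_\lambda$; the bound $\|x_n\|\le \|U\|$ follows by passing to the limit in $\|x_ne_\lambda\|=\|U^*e_\lambda^{(n)}\|\le \|U\|$; and the unital-case identity is direct. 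The main obstacle is really the norm-convergence of $\sum x_na_n$, which the Cauchy--Schwarz estimate above resolves without requiring any a priori control on $(x_n)$ beyond what is encoded in $U$.
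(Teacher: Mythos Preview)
Your proposal is correct and follows essentially the same route as the paper: closed graph theorem for boundedness of $U$, then the Cauchy--Schwarz estimate in a finite direct sum $\A^{|F|}$ (what you call ``truncated $\ell^2(\A)$'') to control $\|\sum_{n\in F}x_na_n\|$ by $\|U\|\,\|\sum_{n\in F}a_n^*a_n\|^{1/2}$, yielding unconditional convergence and the formula for $U^*$. The only cosmetic difference is ordering: the paper reads off the Bessel bound immediately from boundedness of $U$ via Paschke's inequality $\la Ux,Ux\ra\le\|U\|^2\la x,x\ra$ (which does not require adjointability) and establishes the synthesis formula afterwards, whereas you first prove adjointability and then deduce the Bessel bound from $U^*U\le\|U\|^2 I$; the content is identical.
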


\begin{proof}
Suppose that (b) is satisfied. Then the operator $U: X \to \ell^2(\A),$ $Ux=(\la x_n,x\ra)_n,$ is well defined and $\A$-linear. We now show that $U$ has a closed graph.
Let $(y,(a_n)_n)=\lim_{k\rightarrow \infty}(y_k,Uy_k),$ where $y_k,y\in X,\,(a_n)_n\in \ell^2(\textsf{A}).$
For each $m\in \Bbb N$ and all $k\in \Bbb N$ we have
\begin{eqnarray*}
(a_m-\la x_m,y_k\ra)^*(a_m-\la x_m,y_k\ra)&\le&\sum_{n=1}^{\infty}(a_n-\la x_n,y_k\ra)^*(a_n-\la x_n,y_k\ra)\\
&=&\la (a_n)_n-Uy_k,(a_n)_n-Uy_k\ra.
\end{eqnarray*}
Taking norms on both sides we get
$$ \|a_m-\la x_m,y_k\ra\|^2\le \|(a_n)_n-Uy_k\|^2.$$
By assumption $(a_n)_n=\lim_{k\rightarrow\infty}Uy_k$ and $y=\lim_{k\rightarrow\infty}y_k,$ so we get
$$
a_m=\lim_{k\rightarrow\infty}\la x_m,y_k\ra=\la x_m,y\ra.
$$
As $m$ was arbitrary, this shows that $(a_n)_n=Uy.$ So, the graph of $U$ is closed and hence $U$ is a bounded operator.
Now, by \cite[Theorem~2.8]{pas}, it follows that $\la Ux,Ux\ra\le \|U\|^2\la x,x\ra$ for all $x\in X$; thus, $(x_n)_n$ is a Bessel sequence.

\vspace{.1in}

Let us now show, for each $(a_n)_n\in \ell^2(\textsf{A}),$ the unconditional convergence of the series $\sum_{n=1}^{\infty}x_na_n.$ Take arbitrary finite set $F\subseteq \Bbb N$ and denote by $|F|$ the cardinality of $F.$ Then
\begin{eqnarray*}
\Big\| \sum_{n\in F}x_na_n\Big\|^2&=&\sup\left\{\Big\| \Big\langle \sum_{n\in F}x_na_n,y \Big\rangle \Big\|^2:y\in X,\,\|y\|\le 1  \right\}\\
&=&\sup\left\{\Big\|\sum_{n\in F}a_n^*\la x_n,y\ra \Big\|^2:y\in X,\,\|y\|\le 1  \right\}\\
&&\textup{(by applying the Cauchy-Schwarz inequality in $\textsf{A}^{|F|}$)}\\
&\le&\sup\left\{\Big\|\sum_{n\in F}a_n^*a_n\Big\| \,\Big\| \sum_{n\in F}\la y,x_n\ra \la x_n,y\ra \Big\|:y\in X,\,\|y\|\le 1  \right\}\\
&\le&\sup\left\{\Big\|\sum_{n\in F}a_n^*a_n\Big\| \,\Big\| \sum_{n=1}^{\infty}\la y,x_n\ra \la x_n,y\ra \Big\|:y\in X,\,\|y\|\le 1  \right\}\\
&=&\Big\|\sum_{n\in F}a_n^*a_n\Big\| \ \sup\left\{\| Uy \|^2:y\in X,\,\|y\|\le 1  \right\}\\
&=&\|U\|^2 \Big\|\sum_{n\in F}a_n^*a_n\Big\|.
\end{eqnarray*}
Since the series $\sum_{n=1}^{\infty}a_n^*a_n$ converges unconditionally, the family
$\{ a_n^*a_n: n\in \Bbb N\}$ is summable. Hence, the inequality $\left\| \sum_{n\in F}x_na_n\right\|^2\le \|U\|^2 \left\|\sum_{n\in F}a_n^*a_n\right\|$ that we have obtained for each finite subset $F$ of $\Bbb N,$ shows summability of the family
$\{ x_na_n: n\in \Bbb N\},$ which is equivalent to the unconditional convergence of the series $\sum_{n=1}^{\infty}x_na_n.$

\vspace{.1in}

It is now easy to prove that $U$ is adjointable, since we now know that the operator given in \eqref{Bessel-adjoint}
is well defined, and satisfies
$$\la Ux,(a_n)_n\ra =\sum_{n=1}^\infty \la x,x_n\ra a_n=  \la x,\sum_{n=1}^\infty x_n a_n\ra=\la x,U^*((a_n)_n)\ra$$
for all $x\in X$ and $(a_n)_n\in\ell^2(\A).$

The remaining assertions are evident.
\end{proof}

A direct consequence of the preceding theorem is the following characterization of frames for Hilbert $C^*$-modules.

\begin{theorem}\label{frame relaxed}
Let $(x_n)_n$ be a sequence in a Hilbert $\textsf{A}$-module $X.$ Then the following two conditions are equivalent:
\begin{itemize}
\item[(a)] $(x_n)_n$ is a frame for $X.$
\item[(b)] The series $\sum_{n=1}^{\infty}\la x,x_n\ra \la x_n,x\ra$ converges for all $x \in X$ and there exists a constant $A>0$ such that
$A\|x\|^2\le \left\| \sum_{n=1}^{\infty}\la x,x_n\ra \la x_n,x\ra \right\|$ for all $x$ in $X.$
\end{itemize}
\end{theorem}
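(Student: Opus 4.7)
The overall strategy is to reduce to the preceding theorem on Bessel sequences and then upgrade a norm-level lower bound to an order-theoretic lower bound by passing to $U^*U$ and using functional calculus in the unital $C^*$-algebra $\mathbf{B}(X)$.

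The implication (a)$\Rightarrow$(b) is straightforward: assuming $A\la x,x\ra \le \sum_{n=1}^{\infty}\la x,x_n\ra \la x_n,x\ra$ and the convergence of this series, I would simply take norms. Since both sides are positive elements of $\A$ and $\|A\la x,x\ra\| = A\|x\|^2$, the required inequality $A\|x\|^2 \le \|\sum_{n=1}^{\infty}\la x,x_n\ra \la x_n,x\ra\|$ follows.

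For (b)$\Rightarrow$(a), the first half of (b)—convergence of the series for every $x\in X$—immediately triggers Theorem~\ref{Bessel relaxed}. I get that $(x_n)_n$ is Bessel, that the analysis operator $U:X\to \ell^2(\A)$, $Ux=(\la x_n,x\ra)_n$, is well defined and adjointable, and that $\la Ux,Ux\ra = \sum_{n=1}^{\infty}\la x,x_n\ra \la x_n,x\ra$. In particular the upper frame bound $B=\|U\|^2$ is automatic. The remaining hypothesis translates to $\|Ux\|^2 \ge A\|x\|^2$, so $U$ is bounded below by $\sqrt{A}$.

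The core step is to promote this norm bound to the operator inequality $\la Ux,Ux\ra \ge A\la x,x\ra$ in $\A$. Set $V=U^*U\in \mathbf{B}(X)$ and $T=V^{1/2}$, both positive and self-adjoint. Then $\|Tx\|^2 = \la Vx,x\ra = \|Ux\|^2 \ge A\|x\|^2$, so $T$ is bounded below by $\sqrt{A}$. Since $T$ is self-adjoint, for every $\lambda\in [0,\sqrt{A})$ the operator $T-\lambda I$ is self-adjoint and bounded below, hence invertible in $\mathbf{B}(X)$; therefore $\sigma(T)\subseteq [\sqrt{A},\|T\|]$. By the continuous functional calculus applied in $\mathbf{B}(X)$ this yields $T\ge \sqrt{A}\, I$ and so $V=T^2 \ge A\, I$. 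Unwinding, $\sum_{n=1}^{\infty}\la x,x_n\ra \la x_n,x\ra = \la Vx,x\ra \ge A\la x,x\ra$, which together with the Bessel upper bound establishes (a).

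I expect the main obstacle to be precisely the upgrade from the scalar inequality $\|\la Ux,Ux\ra\|\ge A\|\la x,x\ra\|$ in $\A$ to the order inequality in $\A$. There is no direct way to do this at the level of $\A$, since norm bounds need not imply order bounds in a general $C^*$-algebra; the point of the argument is that after composing with $U^*$, one lands in the unital $C^*$-algebra $\mathbf{B}(X)$, where positivity and spectral bounds of the self-adjoint element $V^{1/2}$ can be exploited via functional calculus.
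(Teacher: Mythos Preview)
Your proof is correct and follows the same route as the paper: invoke Theorem~\ref{Bessel relaxed} to obtain the Bessel bound and adjointability of $U$, then upgrade the norm lower bound $\|Ux\|\ge\sqrt{A}\,\|x\|$ to the order lower bound---the paper outsources this last step to \cite[Theorem~2.6]{A} (or \cite[Proposition~3.8]{Jing}), while you supply it directly via the spectrum of $(U^*U)^{1/2}$ in the unital $C^*$-algebra $\Bbb B(X)$. One cosmetic slip: in the chain ``$\|Tx\|^2 = \la Vx,x\ra = \|Ux\|^2$'' you equate scalars with elements of $\A$; what you mean (and use) is $\la Tx,Tx\ra=\la Vx,x\ra=\la Ux,Ux\ra$, whence $\|Tx\|=\|Ux\|$.
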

\begin{proof}
Immediate from Theorem~\ref{Bessel relaxed} and \cite[Theorem~2.6]{A} (or \cite[Proposition~3.8]{Jing}).
\end{proof}

\vspace{.1in}

Another useful characterization of frames arises from a correspondence of Parseval frames for a Hilbert $C^*$-module $X$ with approximate units for $\Bbb K(X).$

It is easy to check that, regarding a $C^*$-algebra $\A$ as a Hilbert $C^*$-module over itself, a sequence $(a_n)_n$ of elements of $\A$ is a Parseval frame for $\A$ precisely when the sequence $(\sum_{n=1}^N a_na_n^*)_N$ is an approximate unit for $\A.$
This observation was extended in \cite[Theorem~1.4]{KLZ} to a wider class of Hilbert $C^*$-modules. We show in the following proposition that it remains true for all Hilbert $C^*$-modules.

\begin{prop}\label{parseval frames are approximate units}
Let $X$ be a Hilbert $C^*$-module. Then a sequence $(x_n)_n$ of elements of $X$ is a Parseval frame for $X$ if and only if the sequence $(\sum_{n=1}^N\theta_{x_n,x_n})_N$ is an approximate unit for $\Bbb K(X).$
\end{prop}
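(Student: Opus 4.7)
The plan is to relate the operators $S_N:=\sum_{n=1}^N\theta_{x_n,x_n}\in\Bbb F(X)\subseteq\Bbb K(X)$ to the Parseval reconstruction formula and to handle the two implications separately.

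For the forward direction, assume $(x_n)_n$ is Parseval. The reconstruction identity~(\ref{Parseval reconstruction}) in Theorem~\ref{full reconstruction} says precisely that $S_Nx=\sum_{n=1}^N x_n\la x_n,x\ra\to x$ in norm for every $x\in X$. Each $S_N$ is a positive element of $\Bbb K(X)$, and the estimate $\la S_Nx,x\ra=\sum_{n=1}^N\la x,x_n\ra\la x_n,x\ra\le\la x,x\ra$ forces $\|S_N\|\le 1$. It remains to show $\|S_NK-K\|\to 0$ for every $K\in\Bbb K(X)$. On a rank-one operator $K=\theta_{y,z}$ one has $S_NK-K=\theta_{(S_N-I)y,z}$, whose norm is at most $\|S_Ny-y\|\,\|z\|\to 0$. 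Linearity extends this to $\Bbb F(X)$, and a standard $\varepsilon/3$ argument based on the uniform bound $\|S_N\|\le 1$ and on the density of $\Bbb F(X)$ in $\Bbb K(X)$ extends it to all of $\Bbb K(X)$.

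For the converse, assume $(S_N)$ is an approximate unit for $\Bbb K(X)$. The crucial step is to upgrade the norm convergence $\|S_NK-K\|\to 0$ on $\Bbb K(X)$ to pointwise norm convergence $S_Nx\to x$ on $X$. Once that is in place, continuity of the inner product yields
$$\sum_{n=1}^{\infty}\la x,x_n\ra\la x_n,x\ra=\lim_N\la S_Nx,x\ra=\la x,x\ra,$$
which is the Parseval condition. To obtain $S_Nx\to x$, I would first establish the non-degeneracy of the $\Bbb K(X)$-action on $X$, i.e.\ $X=\overline{\Bbb K(X)\cdot X}$. Given $y\in X$ and $a\in\overline{\la X,X\ra_\A}$, writing $a$ as a limit of finite sums $\sum_i\la u_i,v_i\ra$ gives $ya=\lim\sum_i\theta_{y,u_i}(v_i)\in\overline{\Bbb K(X)\cdot X}$; choosing $a$ from an approximate unit of the $C^*$-algebra $\overline{\la X,X\ra}$ (for which $ye_\lambda\to y$, since $X$ is a full Hilbert module over it) shows $y\in\overline{\Bbb K(X)\cdot X}$. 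With non-degeneracy in hand, the classical argument for approximate units of a $C^*$-algebra acting on a non-degenerate Banach module — approximate $x$ by a finite sum $\sum_i K_iy_i$ and use $\|S_N\|\le 1$ in the usual $\varepsilon/3$ splitting — gives $S_Nx\to x$ for every $x\in X$.

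The main obstacle is precisely the implication $\|S_NK-K\|\to 0\Longrightarrow S_Nx\to x$. The naive attempt with $K=\theta_{x,x}$ yields only $\|(S_Nx-x)\la x,x\ra\|\to 0$, which does not imply $\|S_Nx-x\|\to 0$ because $\la x,x\ra$ need not behave like an approximate identity on $S_Nx-x$ (and $\A$ is not assumed unital). The non-degeneracy argument sidesteps this by routing the convergence through the module structure over $\overline{\la X,X\ra}$ rather than through a single test vector.
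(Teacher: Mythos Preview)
Your proof is correct. The forward direction is essentially identical to the paper's argument. For the converse you establish non-degeneracy of the $\Bbb K(X)$-action on $X$ and then run a standard $\varepsilon/3$ argument for approximate units acting on non-degenerate Banach modules. The paper instead bypasses your non-degeneracy step entirely by invoking the factorization theorem (Proposition~2.31 in \cite{RW}): every $x\in X$ can be written as $x=y\la y,y\ra=\theta_{y,y}(y)$ for some $y\in X$, so that $S_N\theta_{y,y}\to\theta_{y,y}$ in norm immediately gives $S_Nx=S_N\theta_{y,y}(y)\to\theta_{y,y}(y)=x$. In other words, the ``naive attempt'' you discard actually works once one applies $\theta_{y,y}$ to $y$ rather than $\theta_{x,x}$ to $x$; the factorization theorem supplies exactly the auxiliary $y$ that makes this succeed. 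Your approach is more self-contained (it does not quote a factorization theorem) and illustrates a general module-theoretic principle, while the paper's is shorter and exploits that $X$ is not merely dense in $\Bbb K(X)\cdot X$ but equal to it.
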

\begin{proof}
Suppose that $(x_n)_n$ is a Parseval frame for $X.$ Let $F_N=\sum_{n=1}^N\theta_{x_n,x_n},$ $N\in \Bbb N.$ Obviously, $0\le F_N\le F_{N+1}$ for all $N\in \Bbb N.$ From
$$ \la F_Nx ,x\ra =\sum_{n=1}^N\la x,x_n\ra \la x_n,x\ra\le \sum_{n=1}^{\infty}\la x,x_n\ra \la x_n,x\ra =\la x,x\ra, \quad\forall x \in X,$$
it follows $F_N\le I$ for all $N\in\Bbb N$ (where $I$ denotes the identity operator on $X$).

Let us now fix $v$ and $w$ from $X.$ For any $x\in X$ we have
\begin{eqnarray*}
\|(\theta_{v,w}-F_N\theta_{v,w})x\|&=&
\left\| v\la w,x\ra - \sum_{n=1}^Nx_n\la x_n,v\ra \la w,x\ra \right\|\\
&=&\left\| \left(v-\sum_{n=1}^Nx_n\la x_n,v\ra\right) \la w,x\ra \right\|.
\end{eqnarray*}
Since $(x_n)_n$ is a Parseval frame for $X,$ this shows, by the reconstruction property (\ref{Parseval reconstruction}) from Theorem~\ref{full reconstruction}, that $\|\theta_{v,w}-F_N\theta_{v,w}\| \rightarrow 0$ as $N\rightarrow \infty.$ Clearly, this implies $\|\theta-F_N\theta\| \rightarrow 0$ as $N\rightarrow \infty$ for each $\theta\in\Bbb F(X).$
Finally, take arbitrary $T\in \Bbb K(X)$ and $\varepsilon >0.$ Then we can find $\theta \in \Bbb F(X)$ such that $\|T-\theta\|<\frac{\varepsilon}{3}.$ Further,
there exists $N_0$ such that $\|\theta-F_N\theta\|<\frac{\varepsilon}{3}$ whenever $N\ge N_0.$ Then, for each $N\ge N_0$ we have
$$\|T-F_NT\|\le \| T-\theta\|+\|\theta-F_N\theta\|+\|F_N\theta-F_NT\|<\frac{\varepsilon}{3}+\frac{\varepsilon}{3}+\frac{\varepsilon}{3}=\varepsilon,$$
so $(F_N)_N$ is an approximate unit for $\Bbb K(X).$

To prove the converse, suppose that $(\sum_{n=1}^N\theta_{x_n,x_n})_N$ is an approximate unit for $\Bbb K(X).$ Then
$\theta_{y,y}=\lim_{N\to \infty}\sum_{n=1}^N\theta_{x_n,x_n}\theta_{y,y}$ for each $y\in X.$ In particular,
\begin{equation}\label{yyy}
\theta_{y,y}(y)=\lim_{N\to \infty}\sum_{n=1}^N\theta_{x_n,x_n}\theta_{y,y}(y), \quad \forall y\in X.
\end{equation}
Recall from Proposition~2.31 in \cite{RW} that each $x\in X$ can be written in the form $x=y\la y,y\ra=\theta_{y,y}(y)$ for some $y\in X.$ Then \eqref{yyy} becomes
$$x=\lim_{N\rightarrow \infty}\sum_{n=1}^Nx_n\la x_n,x \ra, \quad\forall x\in X,$$
so by Theorem~\ref{full reconstruction}, $(x_n)_n$ is a Parseval frame for $X.$
\end{proof}

\vspace{.1in}
The preceding proposition extends to arbitrary frames in a standard way (see also \cite[Theorem~1.4]{KLZ}). In the corollary that follows we shall use the strict convergence in $\Bbb B(X)$ with respect to the ideal of generalized compact operators $\Bbb K(X).$

\begin{cor}
\label{frames and approximate units}
Let $X$ be a Hilbert $C^*$-module. Then a sequence $(x_n)_n$ of elements of $X$ is a frame for $X$ if and only if the sequence $(\sum_{n=1}^N\theta_{x_n,x_n})_N$ strictly converges to some invertible operator in $\Bbb B(X).$
\end{cor}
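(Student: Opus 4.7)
The plan is to reduce both directions to Proposition~\ref{parseval frames are approximate units} via the algebraic identity
$$\theta_{Rx_n,\,Rx_n} \;=\; R\,\theta_{x_n,x_n}\,R$$
(valid for any self-adjoint $R\in\Bbb B(X)$, since $\theta_{Rx_n,Rx_n}(z)=Rx_n\la Rx_n,z\ra = Rx_n\la x_n,Rz\ra = R\theta_{x_n,x_n}(Rz)$), applied with $R=S^{-1/2}$ in one direction and $R=T^{-1/2}$ in the other. Throughout, write $F_N:=\sum_{n=1}^N\theta_{x_n,x_n}$.

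For the forward direction, assume $(x_n)_n$ is a frame with analysis operator $U$ and set $S:=U^*U$, which is invertible in $\Bbb B(X)$. Since $(S^{-1/2}x_n)_n$ is a Parseval frame (a fact noted in the introduction), Proposition~\ref{parseval frames are approximate units} gives that $(S^{-1/2}F_NS^{-1/2})_N$ is an approximate unit for $\Bbb K(X)$. For any $L\in\Bbb K(X)$, applying the approximate unit property at $K:=S^{1/2}L\in\Bbb K(X)$ yields $S^{-1/2}F_NL = S^{-1/2}F_NS^{-1/2}K \to K = S^{1/2}L$ in norm; multiplying by $S^{1/2}$ from the left gives $F_NL\to SL$, and symmetrically $LF_N\to LS$. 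Thus $F_N\to S$ strictly, with $S$ invertible.

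For the reverse direction, assume $F_N\to T$ strictly with $T\in\Bbb B(X)$ invertible. The crucial step is the order inequality $F_N\le T$ for every $N$. Since every $x\in X$ factors as $x=\theta_{y,y}(y)$ for some $y\in X$ (Proposition~2.31 in \cite{RW}, as used in the proof of Proposition~\ref{parseval frames are approximate units}), strict convergence promotes to strong convergence:
$$F_N x \;=\; F_N\theta_{y,y}(y) \;\to\; T\theta_{y,y}(y) \;=\; Tx \quad\text{in norm}.$$
Because $F_{N+1}-F_N=\theta_{x_{N+1},x_{N+1}}\ge 0$, the sequence $(\la F_Nx,x\ra)_N$ is increasing in $\A$ with norm-limit $\la Tx,x\ra$, whence $\la(T-F_N)x,x\ra\ge 0$ for all $x\in X$; this gives $F_N\le T$. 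Consequently $T^{-1/2}F_NT^{-1/2}\le I$, and the same manipulation as in the forward direction (with $T^{-1/2}$ in place of $S^{1/2}$) shows $T^{-1/2}F_NT^{-1/2}K\to K$ in norm for every $K\in\Bbb K(X)$. Thus $(T^{-1/2}F_NT^{-1/2})_N$ is a positive sequence in the unit ball of $\Bbb K(X)$ satisfying the defining property of an approximate unit. By Proposition~\ref{parseval frames are approximate units}, $(T^{-1/2}x_n)_n$ is a Parseval frame; substituting $y=T^{1/2}x$ into its Parseval identity yields $\sum_n\la x,x_n\ra\la x_n,x\ra = \la Tx,x\ra$, and the frame bounds $A=\|T^{-1}\|^{-1}$ and $B=\|T\|$ follow at once from positivity and invertibility of $T$.

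The main technical obstacle lies in the reverse direction: promoting the strict convergence $F_N\to T$ to pointwise-in-norm convergence $F_Nx\to Tx$, which in turn unlocks the monotonicity of $(\la F_Nx,x\ra)_N$ needed for the order comparison $F_N\le T$ and the subsequent reduction to the Parseval case via conjugation by $T^{-1/2}$.
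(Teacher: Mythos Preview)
Your proof is correct and follows essentially the same strategy as the paper's: reduce both directions to Proposition~\ref{parseval frames are approximate units} by conjugating with the inverse square root of the invertible limit operator. You are in fact more careful than the paper in the reverse direction, where you explicitly promote strict convergence to strong convergence (via the factorization $x=\theta_{y,y}(y)$) in order to establish $F_N\le T$ and hence that $(T^{-1/2}F_NT^{-1/2})_N$ lies in the unit ball; the paper asserts the resulting sequence is an approximate unit without spelling this out. One minor remark: the fact that $(S^{-1/2}x_n)_n$ is Parseval is not stated in the introduction but is cited from \cite{FL1} in the paper's own proof of this corollary.
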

\begin{proof}
Let $(x_n)_n$ be a frame for $X$ and $U$ its analysis operator. We apply Proposition~\ref{parseval frames are approximate units} to the Parseval frame $((U^*U)^{-\frac{1}{2}}x_n)_n$ (the sequence $((U^*U)^{-\frac{1}{2}}x_n)_n$ is indeed a Parseval frame for $X$, see \cite{FL1}). By Proposition~\ref{parseval frames are approximate units} we conclude
that the sequence $(\sum_{n=1}^N\theta_{(U^*U)^{-\frac{1}{2}}x_n,(U^*U)^{-\frac{1}{2}}x_n})_N$ is an approximate unit for $\Bbb K(X).$ From this we deduce that $(\sum_{n=1}^N\theta_{(U^*U)^{-\frac{1}{2}}x_n,(U^*U)^{-\frac{1}{2}}x_n})_N$ strictly converges to the identity operator $I.$ Since we have $$\sum_{n=1}^N\theta_{(U^*U)^{-\frac{1}{2}}x_n,(U^*U)^{-\frac{1}{2}}x_n}=(U^*U)^{-\frac{1}{2}}(\sum_{n=1}^N\theta_{x_n,x_n})(U^*U)^{-\frac{1}{2}}, \quad\forall N\in \Bbb N,$$ it follows that the sequence $(\sum_{n=1}^N\theta_{x_n,x_n})_N$ strictly converges to the invertible operator $(U^*U)^{-1}\in\Bbb B(X).$

Conversely, if $(\sum_{n=1}^N\theta_{x_n,x_n})_N$ strictly converges to some invertible  $T\in\Bbb B(X),$ then $T$ is necessarily positive, so it follows that the increasing sequence $(\sum_{n=1}^N\theta_{T^{-\frac{1}{2}}x_n,T^{-\frac{1}{2}}x_n})_N$ strictly converges to the identity operator on $X.$ In other words, the sequence $(\sum_{n=1}^N\theta_{T^{-\frac{1}{2}}x_n,T^{-\frac{1}{2}}x_n})_N$
is an approximate unit for $\Bbb K(X).$
By Proposition~\ref{parseval frames are approximate units} it follows that $(T^{-\frac{1}{2}}x_n)_n$ is a Parseval frame for $X.$ Finally, applying \cite[Theorem~2.5]{A}, we conclude that $(x_n)_n$ is a frame for $X.$
\end{proof}

\vspace{.1in}

Next we show that every countably generated Hilbert $C^*$-module $X$ admits approximate units for $\Bbb K(X)$ of the form as in Proposition~\ref{parseval frames are approximate units}. In the proof we shall make use of the left Hilbert $C^*$-module structure on $X$ arising from the action of generalized compact operators.

\begin{theorem}\label{Parseval by Brown}
Let $X$ be a countably generated Hilbert $\textsf{A}$-module.
There exists a sequence $(x_n)_n$ in $X$ such that  $(\sum_{n=1}^N\theta_{x_n,x_n})_N$ is an approximate unit for $\Bbb K(X).$
\end{theorem}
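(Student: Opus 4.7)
In view of Proposition~\ref{parseval frames are approximate units}, it suffices to exhibit a Parseval frame $(x_n)_n$ for $X$. My plan is to produce one via Kasparov's stabilization theorem, after a preliminary reduction to the unital case.

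If $\A$ is non-unital, I would first regard $X$ as a Hilbert $\tilde{\A}$-module; the sequence generating $X$ over $\A$ also generates $X$ over $\tilde{\A}$, and the inner product yields the same $\A$-valued quantities in both pictures. Consequently, any Parseval frame obtained over $\tilde{\A}$ is automatically a Parseval frame over $\A$, so I may assume $\A$ is unital. Kasparov's stabilization theorem \cite[Theorem~15.4.6]{W-O} then delivers an adjointable isometry $V\colon X \to \ell^2(\A)$ with $V^* V = I_X$. Setting $x_n := V^*e^{(n)} \in X$, where $e$ is the unit of $\A$, one checks at once that $\la x, x_n\ra = \la Vx, e^{(n)}\ra = (Vx)_n^*$, so
$$
\sum_{n=1}^\infty \la x, x_n\ra \la x_n, x\ra = \la Vx, Vx\ra = \la x, x\ra, \qquad \forall x \in X.
$$
Thus $(x_n)_n$ is a Parseval frame, and Proposition~\ref{parseval frames are approximate units} finishes the proof.

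The only non-trivial ingredient is Kasparov's stabilization theorem; everything else is a direct computation and a standard unitization argument. The suggestion in the paragraph preceding the theorem to exploit the left Hilbert $\Bbb K(X)$-module structure on $X$ is natural because countable generation of $X$ is equivalent to $\sigma$-unitality of $\Bbb K(X)$, so one could attempt a more intrinsic route, starting from a strictly positive element of $\Bbb K(X)$ and decomposing it into rank-one positives $\theta_{x_n,x_n}$. However, such a decomposition is not automatic, and the Kasparov route appears to be the cleanest. The main potential obstacle is therefore invoking Kasparov's theorem correctly in the non-unital setting, which is precisely what the unitization step addresses.
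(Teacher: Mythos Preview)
Your proof is correct, but it follows a genuinely different route from the paper's. The paper argues intrinsically via the left $\Bbb K(X)$-module structure: since $X$ is countably generated, $\Bbb K(X)$ is $\sigma$-unital (Lance, Proposition~6.7); viewing $X$ as a full left Hilbert $\Bbb K(X)$-module with $[x,y]=\theta_{x,y}$, one applies Lance's Lemma~7.3 (a Brown-type decomposition of a strictly positive element) to obtain directly a sequence $(x_n)_n$ with $(\sum_{n=1}^N\theta_{x_n,x_n})_N$ an approximate unit for $\Bbb K(X)$. In contrast, you first produce a Parseval frame via Kasparov's stabilization theorem (after unitizing) and then invoke Proposition~\ref{parseval frames are approximate units}. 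Your approach is the classical Frank--Larson argument and is perfectly valid; the paper's route has the advantage of being Kasparov-free, which is precisely the point the authors make immediately after the theorem---together with Proposition~\ref{parseval frames are approximate units} it yields an \emph{alternative} proof that frames exist in every countably generated Hilbert $C^*$-module. Your argument, by invoking Kasparov's theorem, reverts to the original existence proof and thus forfeits this independence, even though it establishes the stated theorem just as well.
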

\begin{proof}
Since $X$ is countably generated over $\textsf{A},$  Proposition~6.7 from \cite{L} implies that the $C^*$-algebra $\Bbb K(X)$ is $\sigma$-unital.

Now recall that $X$ is also a full left Hilbert $\Bbb K(X)$-module with the action $(T,x) \mapsto Tx,\,T\in \Bbb K(X),\, x\in X,$ and the inner product $[x,y]=\theta_{x,y}.$ The resulting norm $_\Bbb K\|x\|=\|\theta_{x,x}\|^{\frac{1}{2}}$ coincides with the original norm on $X$ that arises from the right module structure over $\textsf{A}.$

We now apply Lemma~7.3 from \cite{L} to the full left Hilbert $\Bbb K(X)$-module $X$: there exists a sequence $\xn$ in $X$ such that the sequence $(\sum_{n=1}^N[x_n,x_n])_N,$ that is, $(\sum_{n=1}^N\theta_{x_n,x_n})_N$, is an approximate unit for $\Bbb K(X).$
\end{proof}

\vspace{.1in}

Observe that the existence of frames in countably generated Hilbert $C^*$-modules can now be reproved by using Theorem~\ref{Parseval by Brown} and Proposition~\ref{parseval frames are approximate units}.

We also have the following easy consequence of Theorem~\ref{Parseval by Brown}.

\begin{cor}\label{oblik pozitivnog kompaktnog operatora}
Let $X$ be a countably generated Hilbert $\textsf{A}$-module. For each positive operator $T\in \Bbb K(X)$ there exists a sequence $(y_n)_n$ in $X$ such that $T=\sum_{n=1}^{\infty}\theta_{y_n,y_n},$ where this series converges in norm.
\end{cor}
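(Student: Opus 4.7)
The plan is to use Theorem~\ref{Parseval by Brown} to pull a Parseval frame $(x_n)_n$ for $X$ out of the air and then twist it by $T^{1/2}$ to manufacture the required sequence.

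First, invoke Theorem~\ref{Parseval by Brown} (combined with Proposition~\ref{parseval frames are approximate units}) to obtain a sequence $(x_n)_n$ in $X$ such that $(F_N)_N := (\sum_{n=1}^N \theta_{x_n,x_n})_N$ is an approximate unit for $\Bbb K(X)$. Next, since $T \in \Bbb K(X)$ is positive, the operator $T^{1/2}$ lies in $\Bbb K(X)$ as well; this is the crucial fact that lets us control convergence in norm. Define $y_n := T^{1/2} x_n \in X$ for each $n \in \Bbb N$.

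The key computation is the identity
\[
\theta_{y_n,y_n}(z) \;=\; T^{1/2} x_n \la T^{1/2} x_n, z\ra \;=\; T^{1/2} x_n \la x_n, T^{1/2} z\ra \;=\; T^{1/2} \theta_{x_n,x_n} T^{1/2}(z),
\]
which yields $\theta_{y_n,y_n} = T^{1/2} \theta_{x_n,x_n} T^{1/2}$ for every $n$. Summing gives $\sum_{n=1}^N \theta_{y_n,y_n} = T^{1/2} F_N T^{1/2}$.

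It remains to verify norm-convergence of the partial sums to $T$. Since $(F_N)_N$ is an approximate unit for $\Bbb K(X)$ and $T^{1/2} \in \Bbb K(X)$, we have $\|F_N T^{1/2} - T^{1/2}\| \to 0$. Therefore
\[
\Big\|\sum_{n=1}^N \theta_{y_n,y_n} - T\Big\| \;=\; \|T^{1/2} F_N T^{1/2} - T^{1/2} T^{1/2}\| \;\leq\; \|T^{1/2}\| \, \|F_N T^{1/2} - T^{1/2}\| \;\longrightarrow\; 0,
\]
which gives the asserted norm-convergent representation. I do not anticipate a serious obstacle here; the only mild point to watch is that the approximate unit only converges on $\Bbb K(X)$, which is why passing through $T^{1/2} \in \Bbb K(X)$ (rather than trying to act on $T$ by $F_N$ on one side) is essential.
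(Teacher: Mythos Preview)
Your proof is correct and follows essentially the same route as the paper: both use the approximate unit $(F_N)_N=(\sum_{n=1}^N\theta_{x_n,x_n})_N$ from Theorem~\ref{Parseval by Brown}, set $y_n=T^{1/2}x_n$, and exploit $T^{1/2}\in\Bbb K(X)$ to pass from $F_NT^{1/2}\to T^{1/2}$ (the paper writes it as $T^{1/2}F_N\to T^{1/2}$) to the desired norm convergence $T^{1/2}F_NT^{1/2}\to T$. Your write-up is in fact slightly more explicit about the identity $\theta_{y_n,y_n}=T^{1/2}\theta_{x_n,x_n}T^{1/2}$ and the final estimate.
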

\begin{proof}
Let $T\in \Bbb K(X),$ $T\ge 0.$ Using the approximate unit from the preceding theorem we have $T^{\frac{1}{2}}=\lim_{N\rightarrow \infty} \sum_{n=1}^NT^{\frac{1}{2}}\theta_{x_n,x_n}.$
Multiplying by $T^{\frac{1}{2}}$ from the right hand side we get $T=\lim_{N\rightarrow \infty} \sum_{n=1}^NT^{\frac{1}{2}}\theta_{x_n,x_n}T^{\frac{1}{2}}.$ This shows that $(y_n)_n,$ where $y_n=T^{\frac{1}{2}}x_n,\,n\in \Bbb N,$ is a sequence with the desired property.
\end{proof}

\vspace{.1in}

The following result shows that approximate units as in Theorem~\ref{Parseval by Brown}, although of a very special form, not only exist (provided that $X$ is countably generated), but can be derived from any increasing countable approximate unit in $\Bbb K(X).$
To prove this, we first need an auxiliary result on approximate units in $C^*$-algebras.

\begin{lemma}\label{auxiliary apprunit}
Let $(e_n)_n$ be a sequence in a $C^*$-algebra $\textsf{A}$ such that $0\leq e_n\leq e_{n+1}$ and $\|e_n\|\leq 1$ for all $n\in \Bbb N.$ If there exists a subsequence $(e_{p(n)})_n$ of $(e_n)_n$ which is an approximate unit for $\textsf{A},$ then $(e_n)_n$ is also an approximate unit for $\textsf{A}.$
\end{lemma}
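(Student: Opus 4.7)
The plan is to reduce the approximate-unit statement to a $C^*$-inequality that exploits the monotonicity of $(e_n)_n$. Fix $a \in \A$ and $\varepsilon > 0$. Since $(e_{p(n)})_n$ is an approximate unit for $\A$, I can choose $n_0$ such that $\|a - e_{p(n_0)}a\| < \varepsilon$. The goal is then to show that $\|a - e_m a\|$ stays comparably small for every index $m \geq p(n_0)$, not merely along the chosen subsequence. All elements of interest will be viewed inside the unitization $\tilde{\A}$ so that $1 - e_m$ makes sense as a positive element bounded by $1$.

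The key computation I would carry out is the following chain in $\tilde{\A}$. For any $m \geq p(n_0)$ monotonicity gives $e_{p(n_0)} \leq e_m \leq 1$, hence $0 \leq 1 - e_m \leq 1 - e_{p(n_0)} \leq 1$. Because $0 \leq 1-e_m \leq 1$ we have $(1-e_m)^2 \leq 1-e_m$, and combined with the previous inequality this yields
\[
(1 - e_m)^2 \leq 1 - e_m \leq 1 - e_{p(n_0)}.
\]
Sandwiching with $a^*$ on the left and $a$ on the right preserves the order, so
\[
\|a - e_m a\|^2 = \|a^*(1 - e_m)^2 a\| \leq \|a^*(1 - e_{p(n_0)}) a\| \leq \|a\|\,\|a - e_{p(n_0)} a\| < \|a\|\,\varepsilon.
\]
Since $\varepsilon$ was arbitrary, this proves $\lim_m e_m a = a$ for every $a \in \A$, and thus $(e_n)_n$ is an approximate unit.

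I do not anticipate a serious obstacle: all the inequalities used are standard consequences of the order structure on a $C^*$-algebra, and monotonicity of $(e_n)_n$ is precisely what allows the subsequence bound to control the intermediate indices. The only minor subtlety is that $1 - e_m$ must be interpreted in the unitization when $\A$ is non-unital, but $a^*(1 - e_m)a = a^*a - a^*e_m a$ lies back in $\A$, so no issue arises in taking norms. No separate argument for the non-unital case is therefore needed.
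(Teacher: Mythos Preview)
Your proof is correct and follows essentially the same route as the paper: work in the unitization, use monotonicity to compare $1-e_m$ with $1-e_{p(n_0)}$, and pass from a bound on $a^*(1-e_m)a$ (or $a^*(1-e_m)^2a$) to a bound on $\|a-e_ma\|^2$. The only cosmetic difference is that the paper first reduces to controlling $\|a^*a-a^*e_{p(n)}a\|$ and uses the factorization $(e-e_n)^{1/2}(e-e_n)^{1/2}$, whereas you use the equivalent inequality $(1-e_m)^2\le 1-e_m$ directly and pick up an extra factor of $\|a\|$ at the end; neither change affects the argument.
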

\begin{proof}
Let $a\in \textsf{A}.$ First observe that $\lim_{n\to \infty}\|e_{p(n)}a-a\|=0$ implies $\lim_{n\to \infty}\|a^*e_{p(n)}a-a^*a\|=0.$
Fix $\varepsilon >0$ and find $n_0\in \Bbb N$ such that
$\|a^*e_{p(n)}a-a^*a\| <\varepsilon$ for all $n\ge n_0.$
Since $(e_n)_n$ increases and $\|e_n\|\le 1$ for all $n,$ we have
$$0\le a^*a-a^*e_na \le a^*a-a^*e_{p(n_0)}a,\quad \forall n\ge p(n_0),$$
so
\begin{equation}\label{druga}
\|a^*a-a^*e_na \|\le \| a^*a-a^*e_{p(n_0)}a\|<\varepsilon, \quad \forall n\ge p(n_0).
\end{equation}
We now continue our computation in $\tilde{\textsf{A}},$ if needed. Observe that $\|e-e_n\|\leq 1$ for all $n\in \Bbb N.$
Since $$\|a-e_na \|^2=\|(e-e_n)^\frac{1}{2}(e-e_n)^\frac{1}{2}a \|^2\le \|(e-e_n)^\frac{1}{2}a \|^2=\|a^*a-a^*e_na \|,$$
\eqref{druga} gives us $\lim_{n\to \infty}\|a-e_na \|=0.$
\end{proof}

\vspace{.1in}

\begin{theorem}\label{second construction method}
Let $X$ be a Hilbert $\textsf{A}$-module.
\begin{enumerate}
\item[(a)] If a sequence $(E_N)_N$ is an increasing approximate unit for $\Bbb K(X)$ then there exists a sequence $(x_n)_n$  in $X$ and an increasing sequence of natural numbers $(p(N))_N$ with the properties $\sum_{n=1}^{p(N)}\theta_{x_n,x_n} \leq E_N$ and
    $\left\| \sum_{n=1}^{p(N)}\theta_{x_n,x_n} - E_N \right\|<\frac{1}{N}$ for all $N\in \Bbb N.$
\item[(b)] If $(x_n)_n$ is any sequence in $X$ as in (a), then $(x_n)_n$ is a Parseval frame for $X.$
\end{enumerate}
\end{theorem}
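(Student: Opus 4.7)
The plan for part~(a) is to construct the sequence $(x_n)_n$ inductively, extending it in blocks indexed by $N$ so that the cumulative partial sum approximates $E_N$ from below to within $\frac{1}{N}$. With the convention $E_0=0$ and $p(0)=0$, suppose $x_1,\dots,x_{p(N-1)}$ have already been chosen with $\sum_{n=1}^{p(N-1)}\theta_{x_n,x_n}\le E_{N-1}$. Then the operator
\[
R_N \;=\; E_N \;-\; \sum_{n=1}^{p(N-1)}\theta_{x_n,x_n}
\]
is positive and lies in $\Bbb K(X)$, since $E_{N-1}\le E_N$. Here I would invoke Corollary~\ref{oblik pozitivnog kompaktnog operatora} (which applies because the existence of a countable increasing approximate unit for $\Bbb K(X)$ forces $\Bbb K(X)$ to be $\sigma$-unital, and hence $X$ to be countably generated) to write $R_N=\sum_{k=1}^\infty \theta_{z_k^{(N)},z_k^{(N)}}$ with norm-convergent sum. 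Choose $K_N$ large enough so that the tail has norm $<\frac{1}{N}$, set $p(N)=p(N-1)+K_N$, and define $x_{p(N-1)+k}=z_k^{(N)}$ for $k=1,\dots,K_N$. By construction $\sum_{n=1}^{p(N)}\theta_{x_n,x_n}=\sum_{n=1}^{p(N-1)}\theta_{x_n,x_n}+\sum_{k=1}^{K_N}\theta_{z_k^{(N)},z_k^{(N)}}\le E_N$, and the norm of $E_N$ minus this partial sum is exactly the norm of the discarded tail, hence less than $\frac{1}{N}$.

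For part~(b), set $F_M=\sum_{n=1}^M\theta_{x_n,x_n}$. The strategy is to show that $(F_M)_M$ is an approximate unit for $\Bbb K(X)$ and then conclude via Proposition~\ref{parseval frames are approximate units}. First, the subsequence $(F_{p(N)})_N$ is an approximate unit for $\Bbb K(X)$: for any $T\in \Bbb K(X)$,
\[
\|T-F_{p(N)}T\| \;\le\; \|T-E_N T\| + \|E_N-F_{p(N)}\|\,\|T\| \;\le\; \|T-E_N T\| + \tfrac{\|T\|}{N},
\]
which tends to $0$ since $(E_N)_N$ is an approximate unit. Next, $(F_M)_M$ is increasing by construction, and since for every $M$ one has $F_M\le F_{p(N)}\le E_N$ for any $N$ with $p(N)\ge M$, the bound $\|E_N\|\le 1$ gives $\|F_M\|\le 1$.

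With these two facts in hand, Lemma~\ref{auxiliary apprunit} applied to the $C^*$-algebra $\Bbb K(X)$ and the sequence $(F_M)_M$ upgrades the approximate-unit property from the subsequence $(F_{p(N)})_N$ to the full sequence $(F_M)_M$. Finally, Proposition~\ref{parseval frames are approximate units} then yields that $(x_n)_n$ is a Parseval frame for $X$.

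The main obstacle is the inductive step in part~(a): ensuring simultaneously that the new block of $\theta_{x_n,x_n}$'s stays below $E_N$ (so that the full partial sum is dominated by $E_N$) and that the remaining gap is controlled in norm. The key trick is to absorb both conditions into a single decomposition of the positive compact operator $R_N$ via Corollary~\ref{oblik pozitivnog kompaktnog operatora}; domination is then automatic from truncating a norm-convergent series of positive operators, and the norm estimate is just the tail.
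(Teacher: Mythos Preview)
Your proposal is correct and follows essentially the same approach as the paper: the inductive construction in (a) via Corollary~\ref{oblik pozitivnog kompaktnog operatora} applied to the positive compact remainder $R_N$, and in (b) the passage from the subsequence $(F_{p(N)})_N$ to the full sequence $(F_M)_M$ via Lemma~\ref{auxiliary apprunit}, followed by Proposition~\ref{parseval frames are approximate units}. Your explicit justification that Corollary~\ref{oblik pozitivnog kompaktnog operatora} applies (the countable approximate unit makes $\Bbb K(X)$ $\sigma$-unital, hence $X$ countably generated) is in fact a point the paper's proof leaves implicit.
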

\begin{proof}
To prove (a), suppose that $(E_N)_N$ is an increasing approximate unit for $\Bbb K(X).$ By Corollary~\ref{oblik pozitivnog kompaktnog operatora}, there exists a sequence $(y_n^1)_n$ in $X$ such that
$E_1=\sum_{n=1}^{\infty}\theta_{y_n^1,y_n^1}.$ Find $p(1)$ such that
$$
\left\|E_1- \sum_{n=1}^{p(1)}\theta_{y_n^1,y_n^1}\right\|<1.
$$
Put $x_n=y_n^1$ for $n=1,2,\ldots, p(1)$ and $F_{p(1)}=\sum_{n=1}^{p(1)}\theta_{x_n,x_n}.$ Then $F_{p(1)}\le E_1$ and $\|F_{p(1)}-E_1\|< 1.$

Now observe that $F_{p(1)}\le E_1\le E_2$ implies $E_2-F_{p(1)}\ge 0.$ Again by Corollary~\ref{oblik pozitivnog kompaktnog operatora}, there exists a sequence $(y_n^2)_n$ in $X$ such that $E_2-F_{p(1)}=\sum_{n=1}^{\infty}\theta_{y_n^2,y_n^2}.$ Choose $M$ such that
$$
\left\|E_2- F_{p(1)} -\sum_{n=1}^{M}\theta_{y_n^2,y_n^2}\right\|<\frac{1}{2}.
$$
Denote $p(2)=p(1)+M$ and $x_{p(1)+n}=y_n^2,\,n=1,2,\ldots,M.$ Let $F_{p(2)}=F_{p(1)}+\sum_{n=1}^{M}\theta_{y_n^2,y_n^2}= \sum_{n=1}^{p(2)}\theta_{x_n,x_n}.$
Then, by construction, we have $F_{p(2)}\le E_2$ and $\|F_{p(2)}-E_2\|<\frac{1}{2}.$

Proceed by induction to obtain $F_{p(N)}=\sum_{n=1}^{p(N)}\theta_{x_n,x_n}$ with the properties $F_{p(N)}\le E_N$ and $\|F_{p(N)}-E_N\|<\frac{1}{N}.$

\vspace{.1in}

Let us now prove (b).
First note that $0\le F_{p(N)}\le E_N$ implies $\|F_{p(N)}\|\le \|E_N\|\leq 1$ for all $N \in \Bbb N.$
By a routine approximation argument one shows that $\|F_{p(N)}T-T\|\rightarrow 0$ for each $T \in \Bbb K(X),$ that is, $(F_{p(N)})_N$ is an approximate unit for $\Bbb K(X).$

Put $F_N=\sum_{n=1}^{N}\theta_{x_n,x_n}$ for each $N\in \Bbb N.$ Clearly, $0\leq F_N\leq F_{N+1}$ and, because of $F_N\leq F_{p(N)}\le E_N$, we also have $\|F_N\|\leq 1.$ By Lemma~\ref{auxiliary apprunit}, $(F_N)_N$ is an approximate unit for $\Bbb K(X).$
Proposition~\ref{parseval frames are approximate units} now implies that $(x_n)_n$ is a Parseval frame for $X.$
\end{proof}


\vspace{.1in}

We end this section by an example of a Parseval frame for $\ell^2(\A)$ (where $\A$ is an arbitrary $\sigma$-unital $C^*$-algebra) and the corresponding approximate unit for $\Bbb K(\ell^2(\A))$ that arises from Proposition~\ref{parseval frames are approximate units}. First we need a useful auxiliary result.

\begin{lemma}\label{frame property on a dense submodule}
Let $Y$ be a dense submodule of a Hilbert $\A$-module $X.$ Suppose that a sequence $(x_n)_n$ in $X$ has the property
$$
A\la y,y\ra \leq \sum_{n=1}^{\infty}\la y,x_n\ra\la x_n,y\ra \leq B\la y,y\ra,\quad \forall y\in Y,
$$
for some positive constants $A$ and $B.$ Then $(x_n)_n$ is a frame for $X$ with frame bounds $A$ and $B.$
\end{lemma}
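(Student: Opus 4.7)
The plan is to produce the analysis operator on all of $X$ by density, then transfer both frame inequalities from $Y$ to $X$ by continuity.

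First, I would use the upper estimate on $Y$ to build a bounded operator. For $y\in Y$, the hypothesis gives
$$\Big\| \sum_{n=1}^{\infty}\la y,x_n\ra\la x_n,y\ra \Big\| \leq B\|y\|^2,$$
so the map $U_Y\colon Y\to\ell^2(\A)$, $U_Y y=(\la x_n,y\ra)_n$, is well defined, $\A$-linear, and satisfies $\|U_Y y\|\leq \sqrt{B}\,\|y\|$. Since $Y$ is dense in $X$, $U_Y$ extends uniquely to a bounded $\A$-linear operator $U\colon X\to\ell^2(\A)$. For any $x\in X$ and any net $y_k\in Y$ with $y_k\to x$, continuity of $U$ gives $Uy_k\to Ux$ in $\ell^2(\A)$; coordinatewise, $\la x_n,y_k\ra\to\la x_n,x\ra$ in $\A$, and since the coordinate projections are contractive on $\ell^2(\A)$, we conclude $Ux=(\la x_n,x\ra)_n$ for all $x\in X$. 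In particular the series $\sum_{n=1}^{\infty}\la x,x_n\ra\la x_n,x\ra=\la Ux,Ux\ra$ converges for every $x\in X$, so by Theorem~\ref{Bessel relaxed} the sequence $(x_n)_n$ is Bessel for $X$.

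Next, I would promote the two order inequalities from $Y$ to $X$ by a density/closedness argument. Both sides of
$$A\la y,y\ra \leq \la Uy,Uy\ra \leq B\la y,y\ra$$
are norm-continuous functions of $y$: the map $y\mapsto\la y,y\ra$ is continuous on $X$ valued in $\A$, and since $U$ is bounded and the inner product on $\ell^2(\A)$ is jointly continuous, $y\mapsto\la Uy,Uy\ra$ is continuous as well. Given $x\in X$, pick $y_k\in Y$ with $y_k\to x$; then
$$\la Uy_k,Uy_k\ra - A\la y_k,y_k\ra \geq 0 \quad \text{and} \quad B\la y_k,y_k\ra - \la Uy_k,Uy_k\ra \geq 0$$
for all $k$. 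Since the positive cone of $\A$ is norm-closed, the limits satisfy
$$A\la x,x\ra \leq \la Ux,Ux\ra = \sum_{n=1}^{\infty}\la x,x_n\ra\la x_n,x\ra \leq B\la x,x\ra,$$
which is precisely the frame condition on $X$ with the same bounds $A$ and $B$.

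The only subtle point is identifying the extension of $U_Y$ with the sequence $(\la x_n,\cdot\ra)_n$ on all of $X$; this is handled automatically by the coordinatewise continuity described above, so no real obstacle arises. Everything else is a routine density and norm-closedness argument in the positive cone of $\A$.
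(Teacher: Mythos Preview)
Your proof is correct and, in one respect, cleaner than the paper's. Both arguments start the same way: define the analysis map on $Y$, bound it by $\sqrt{B}$, extend by density to $U:X\to\ell^2(\A)$, and identify $Ux=(\la x_n,x\ra)_n$ via coordinate continuity. The divergence is in how the frame inequalities are lifted from $Y$ to $X$. The paper builds adjointability of $U$ from scratch (defining $U^*$ on $c_{00}(\A)$, checking the adjoint relation first on $Y\times c_{00}(\A)$, then extending by continuity in both arguments), establishes only the \emph{norm} inequalities $\sqrt{A}\,\|x\|\leq\|Ux\|\leq\sqrt{B}\,\|x\|$ on $X$, and then invokes \cite[Theorem~2.6]{A} to upgrade these to the required \emph{order} inequalities. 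You instead pass the order inequalities directly through the limit, using continuity of $y\mapsto\la Uy,Uy\ra$ and closedness of the positive cone of $\A$. This is more self-contained and sidesteps both the explicit adjoint construction and the external citation. Note also that your appeal to Theorem~\ref{Bessel relaxed} is not strictly needed: your density-plus-closed-cone argument already delivers the full frame condition, so the separate Bessel conclusion is subsumed.
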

\begin{proof}
Let us define $U_0:Y \rightarrow \ell^2(\A)$ by $U_0y=(\la x_n,y\ra)_n.$
Clearly, $U_0$ is well defined, $\A$-linear, bounded and bounded from below. Let $U:X\rightarrow \ell^2(\A)$ be the continuation of $U_0.$ Note that $\|U\|=\|U_0\|\leq \sqrt{B}.$ Similarly, for $x\in X,$ if $(y_n)_n$ is a sequence in $Y$ such that $x=\lim_{n\rightarrow\infty}y_n,$ we have
$$
\|Ux\|=\lim_{n\rightarrow\infty}\|U_0y_n\|\geq \lim_{n\rightarrow\infty}\sqrt{A}\|y_n\|=\sqrt{A}\|x\|.
$$
We now prove that $U$ is an adjointable operator. First, put 
$$U^*(a_1,\ldots,a_N,0,0,\ldots)=\sum_{n=1}^Nx_na_n,\quad \forall(a_1,\ldots,a_N,0,0,\ldots)\in c_{00}(\A).$$
By a routine verification one shows that
\begin{equation}\label{prvi korak do adjointa}
\la Uy,z\ra=\la y,U^*z\ra,\quad \forall y\in Y,\quad\forall z \in c_{00}(\A).
\end{equation}
Suppose now that $x=\lim_{n\rightarrow\infty}y_n$ with $y_n\in Y.$ Then, by (\ref{prvi korak do adjointa}), we have $\la Uy_n,z\ra=\la y_n,U^*z\ra$ for all $n\in \Bbb N$ and $z \in c_{00}(\A).$ By letting $n \rightarrow \infty$
we obtain
\begin{equation}\label{drugi korak do adjointa}
\la Ux,z\ra=\la x,U^*z\ra,\quad\forall x\in X,\quad\forall z \in c_{00}(\A).
\end{equation}
We now show that $U^*$ is bounded on $c_{00}(\A).$ Let $z\in c_{00}(\A).$ Then
\begin{eqnarray*}
\|U^*z\|&=&\sup\{\|\la x,U^*z\ra\|: x\in X, \|x\|\leq 1\}\\
 &\stackrel{\eqref{drugi korak do adjointa}}{=}&\sup\{\|\la Ux,z\ra\|: x\in X, \|x\|\leq 1\}\\
 &\leq&\sqrt{B}\|z\|.
\end{eqnarray*}
This enables us to extend $U^*$ by continuity to $\ell^2(\A).$ It is now evident that (\ref{drugi korak do adjointa}) extends to the same equality that holds true for all $x\in X$ and $z\in \ell^2(\A).$ Thus, $U$ is an adjointable operator.
Since $Ux=(\la x_n,x\ra)_n$ for all $x\in X$ and $A\|x\|^2\le \|Ux\|^2\le B\|x\|^2$ for all $x\in X,$ it only remains to apply Theorem~2.6 from \cite{A}.
\end{proof}

\begin{example}
Let $\A$ be a $\sigma$-unital $C^*$-algebra and let $(e_n)_n$ be an increasing approximate unit for $\A$; put additionally $e_0=0.$ Let $$f_n=(e_n-e_{n-1})^{\frac{1}{2}},\quad n\in \Bbb N.$$
Consider $\A$ as a Hilbert $C^*$-module over itself.
Since $$\sum_{n=1}^N\theta_{f_n,f_n}=\sum_{n=1}^Nf_nf_n^*=\sum_{n=1}^N(e_n-e_{n-1})=e_N,\quad \forall N\in\Bbb N,$$
we conclude from Proposition~\ref{parseval frames are approximate units} that $(f_n)_n$ is a Parseval frame for $\A.$

For $n,j\in \Bbb N$ consider the system
$$
f_n^{(j)}=(0,\ldots,0,f_n,0,\ldots)\in \ell^2(\A)\,\,\, (\text{$f_n$ on $j$-th position, $0$'s elsewhere}).
$$
We will show that the system $(f_n^{(j)})_{n,j=1}^{\infty}$ is a Parseval frame for $\ell^2(\A).$

Let us first organize our system $(f_n^{(j)})_{n,j=1}^{\infty}$ into a sequence.
This can be done in a standard way by enumerating elements along finite diagonals of an infinite matrix starting from the upper left corner. Put
$$
p(m)=\frac{1}{2}m(m+1),\quad m=0,1,2,\ldots
$$
and observe that each natural number $n$ can be written in a unique way as
$$
n=p(m-1)+k_m,\quad m\in \Bbb N,\quad k_m\in \{1,2,\,\ldots ,m\}.
$$
We now put
$$
x_n=x_{p(m-1)+k_m}=f_{m+1-k_m}^{(k_m)},\quad n\in \Bbb N.
$$
This gives us a sequence
$$
f_1^{(1)}, f_2^{(1)}, f_1^{(2)}, f_3^{(1)}, f_2^{(2)}, f_1^{(3)}, f_4^{(1)}, f_3^{(2)}, f_2^{(3)}, f_1^{(4)},\ldots
$$

Let us now show that
\begin{equation}\label{parseval dense}
y=\sum_{n=1}^{\infty}x_n\la x_n,y\ra,\quad \forall y\in c_{00}(\A).
\end{equation}

Fix an arbitrary $y=(a_1,\ldots ,a_m,0,0,\ldots) \in c_{00}(\A).$ Let $\varepsilon>0.$
Since $(f_n)_n$ is a Parseval frame for $\A,$ we can find $N_0\in \Bbb N$ such that
\begin{equation}\label{m parseval approx}
N\geq N_0 \Rightarrow \left\|a_i-\sum_{n=1}^Nf_n\la f_n,a_i \ra\right\|<\frac{\varepsilon}{m},\ \forall i=1,2,\ldots, m.
\end{equation}
For such $N_0$ consider $p(N_0+m).$ We now claim that
\begin{equation}\label{epsilon prag}
N\geq p(N_0+m) \Rightarrow \left\|y-\sum_{n=1}^Nx_n\la x_n,y\ra\right\|<\varepsilon.
\end{equation}
To show this, first observe that each term in the sum $\sum_{n=1}^Nx_n\la x_n,y\ra$ is of the form $f_j^{(k)}\left\langle f_j^{(k)},y\right\rangle$ which is in fact $\left(f_j\la f_j,a_k\ra\right)^{(k)}$ - an element of $\ell^2(\A)$ with
$f_j\la f_j,a_k\ra$ on $k$-th position and $0$'s elsewhere.

Let us first prove (\ref{epsilon prag}) for $N=p(N_0+m).$ Since in this case we have $N=p(N_0+m-1)+(N_0+m),$ the last $N_0+m$ members among
$x_1,x_2,\ldots,x_N$ are
$$
f_{N_0+m}^{(1)},\ldots,f_{N_0+1}^{(m)},\ldots,f_{1}^{(N_0+m)}.
$$
Thus
\begin{eqnarray*}
\left\|y-\sum_{n=1}^N x_n\la x_n,y\ra\right\|&\le & \left\| a_1-\sum_{n=1}^{N_0+m}f_n\la f_n,a_1\ra \right\|+ \left\| a_{2}-\sum_{n=1}^{N_0+m-1}f_n\la f_n,a_{2}\ra \right\|\\
&& +\ldots +
\left\| a_m-\sum_{n=1}^{N_0+1}f_n\la f_n,a_m\ra \right\|\\
&\stackrel{(\ref{m parseval approx})}{<}&\frac{\varepsilon}{m}+\frac{\varepsilon}{m}+\ldots +\frac{\varepsilon}{m}=\varepsilon.
\end{eqnarray*}

Next we  prove (\ref{epsilon prag}) for $N> p(N_0+m).$ By the same reasoning as above we get natural numbers $N_1,N_2,\ldots,N_m>N_0$ such that
\begin{eqnarray*}
\left\|y-\sum_{n=1}^N x_n\la x_n,y\ra\right\|&\le & \left\| a_1-\sum_{n=1}^{N_1}f_n\la f_n,a_1\ra \right\|+
\left\| a_{2}-\sum_{n=1}^{N_2}f_n\la f_n,a_{2}\ra \right\|\\
&&+\ldots + \left\| a_m-\sum_{n=1}^{N_m}f_n\la f_n,a_m\ra \right\|\\
&\stackrel{(\ref{m parseval approx})}{<}&\frac{\varepsilon}{m}+\frac{\varepsilon}{m}+\ldots +\frac{\varepsilon}{m}=\varepsilon.
\end{eqnarray*}

This proves (\ref{parseval dense}). In particular, by taking inner products by $y$ in (\ref{parseval dense}) we obtain
\begin{equation}\label{cetvrti korak}
\la y,y\ra=\sum_{n=1}^{\infty}\la y,x_n\ra \la x_n,y\ra,\quad \forall y\in c_{00}(\A).
\end{equation}
The desired conclusion, namely that $(x_n)_n$ is a Parseval frame for $\ell^2(\A),$ now follows directly from the preceding lemma.

Observe that the same construction can be done starting from an arbitrary Parseval frame $(f_n)_n$ for $\A$ - one can easily check that all the above arguments apply without changes.

\vspace{.1in}

By Proposition~\ref{parseval frames are approximate units} we now know that the sequence $(\sum_{n=1}^N\theta_{x_n,x_n})_N$ is an approximate unit for $\Bbb K(\ell^2(\A)).$ Since each subsequence of an approximate unit is an approximate unit itself, we conclude that the sequence $(\sum_{n=1}^{p(N)}\theta_{x_n,x_n})_N$ is also an approximate unit for $\Bbb K(\ell^2(\A)).$

Finally, note that the operators $T_N=\sum_{n=1}^{p(N)}\theta_{x_n,x_n},\, N\in \Bbb N,$ are in fact of a very simple form. Indeed, by an easy computation one gets
$$
T_N((a_n)_n)=(\sum_{n=1}^Nf_n^2a_1, \sum_{n=1}^{N-1}f_n^2a_2,\ldots,f_1^2a_N,0,0,\ldots),\quad\forall (a_n)_n\in \ell^2(\A);
$$
in other words,
$$
T_N((a_n)_n)=(e_Na_1, e_{N-1}a_2,\ldots,e_1a_N,0,0,\ldots),\quad\forall (a_n)_n\in \ell^2(\A).
$$
\end{example}

\vspace{0.2in}

\section{Outer frames}

\vspace{0.2in}

Recall from the introduction that each frame $(x_n)_n$ for a Hilbert $\A$-module $X$ gives rise to an adjointable surjection (namely, the corresponding synthesis operator) from $\ell^2(\A)$ to $X.$
We open this section with the converse statement - a fact that is, although simple, of great importance in frame theory.
We point out that here the underlying $C^*$-algebra $\A$ must be unital (cf.~Example~\ref{fail 2}).

\begin{prop}\label{frames vs surjections}
Let $X$ be a Hilbert $C^*$-module over a unital $C^*$-algebra $\A$ and let $T\in \Bbb B(\ell^2(\A),X)$ be a surjection. Then there is a frame $(x_n)_n$ for $X$ whose synthesis operator is equal to $T.$
\end{prop}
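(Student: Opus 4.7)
The natural candidate is to set $x_n = T(e^{(n)})$ for each $n\in\Bbb N$, which is only available because $\A$ has a unit $e$. My first step is to verify that this choice of $(x_n)_n$ really has $T$ as its synthesis operator. For any $(a_n)_n\in\ell^2(\A)$, the truncations $(a_1,\ldots,a_N,0,0,\ldots) = \sum_{n=1}^N e^{(n)}a_n$ converge in norm to $(a_n)_n$ in $\ell^2(\A)$, so by continuity and $\A$-linearity of $T$ we get
$$T((a_n)_n) = \lim_{N\to\infty}\sum_{n=1}^N T(e^{(n)})\,a_n = \sum_{n=1}^{\infty} x_n a_n.$$
In particular the series on the right converges for every $(a_n)_n\in\ell^2(\A)$.

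Next I would identify $T^*$ with the analysis operator of $(x_n)_n$. For any $x\in X$ and $n\in\Bbb N$, the $n$-th coordinate of $T^*x\in\ell^2(\A)$ can be extracted as $\la e^{(n)}, T^*x\ra = \la Te^{(n)}, x\ra = \la x_n, x\ra$, so $T^*x = (\la x_n, x\ra)_n$. Hence $T^*$ is exactly the would-be analysis operator of $(x_n)_n$, and by continuity of the inner product on $\ell^2(\A)$,
$$\la T^*x, T^*x\ra = \sum_{n=1}^{\infty}\la x, x_n\ra\la x_n, x\ra,\qquad \forall x\in X.$$

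It remains to produce frame bounds. The upper bound is immediate from Paschke's inequality \cite[Theorem~2.8]{pas}: $\la T^*x, T^*x\ra \le \|T^*\|^2\la x, x\ra$, so we may take $B=\|T\|^2$. For the lower bound, I exploit that $T$ is an adjointable surjection. By \cite[Theorem~15.3.8]{W-O} its range is closed (which is automatic here) and $T^*$ is bounded below; equivalently, $TT^*\in\Bbb B(X)$ is positive and invertible, so there exists $c>0$ with $TT^*\ge cI$. Then
$$\la T^*x, T^*x\ra = \la TT^*x, x\ra \ge c\la x, x\ra,\qquad \forall x\in X,$$
giving the lower frame bound $A=c$. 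Combining everything shows $(x_n)_n$ is a frame for $X$ whose synthesis operator coincides with $T$.

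No step looks genuinely hard; the whole proof is a direct consequence of unitality (which allows the definition $x_n=Te^{(n)}$) together with standard facts about adjointable surjections. The only subtlety worth flagging is that the analogous construction breaks down without a unit, as Example~\ref{fail 2} already illustrates.
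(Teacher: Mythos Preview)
Your proof is correct and follows the same initial step as the paper: set $x_n=Te^{(n)}$. The paper's own argument is even shorter because it invokes \cite[Theorem~2.5]{A} (adjointable surjections map frames to frames) to conclude immediately that $(x_n)_n$ is a frame, and then checks $U^*=T$ by comparing both operators on the vectors $e^{(n)}$. What you have done is essentially unpack that citation: you verify the frame bounds directly from the invertibility of $TT^*$ and Paschke's inequality, which makes your argument self-contained but a bit longer. Both routes are standard and yield the same result; the only real content in either case is the definition $x_n=Te^{(n)}$, which relies on unitality.
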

\begin{proof}
Put $x_n=Te^{(n)},\,n \in \Bbb N.$ By \cite[Theorem~2.5]{A}, the sequence $(x_n)_n$ is a frame for $X.$ Denote the corresponding analysis operator by $U.$ Then we have, for all $x\in X$ and $n\in \Bbb N,$
$$
\la U^*e^{(n)},x\ra=\la e^{(n)},Ux\ra=\la x_n,x\ra=\la Te^{(n)}, x\ra,
$$
which implies $U^*=T.$
\end{proof}

In order to obtain the non-unital version of Proposition~\ref{frames vs surjections}, recall that each Hilbert $C^*$-module $X$ over a non-unital $C^*$-algebra $\A$ can be regarded as a Hilbert $C^*$-module over a unital $C^*$-algebra $\tilde{\A}.$ Since frames for a Hilbert $\A$-module $X$ and frames for a Hilbert $\tilde{\A}$-module $X$ coincide, we conclude from Proposition~\ref{frames vs surjections} that each surjection in $\Bbb B(\ell^2(\tilde{\A}),X)$ serves as the synthesis operator of some frame for $X.$

In some situations this conclusion enables us to reduce the non-unital case to the unital one. However, as we shall see in the subsequent sections, this still does not resolve the difficulty observed in Example~\ref{fail 2}. Namely, there are surjections from $\Bbb B(\ell^2(\A),X)$ which cannot be extended to adjointable operators from $\ell^2(\tilde{\A})$ to $X$ (which is precisely the case with the surjection $T$ from Example~\ref{fail 2}). On the other hand, such surjections, as the same example indicates, might be associated with sequences that behave as frames and the only difference is that the members of such frame-like sequences need not belong to the original module $X.$

The preceding discussion suggests that our study of frames for Hilbert $C^*$-modules over non-unital $C^*$-algebras requires a more general setting.
Thus, we shall extend our considerations to multiplier Hilbert $C^*$-modules.

To avoid unnecessary complications, we shall restrict ourselves in the analysis that follows to {\em infinite sequences}. At the end of this section we shall make appropriate comments on the corresponding results concerning finite frames.

\vspace{.1in}

First, in the remark that follows, we include for reader's convenience the most important facts concerning multiplier Hilbert $C^*$-modules (see \cite{BG} and \cite{BG1}).

\vspace{.1in}

\begin{remark}\label{multiplier modules} Let $X$ be a Hilbert $\textsf{A}$-module.

(a)  There exists a Hilbert $M(\textsf{A})$-module $M(X)$ containing $X$ as the ideal submodule associated with the ideal $\textsf{A}$ in $M(\textsf{A})$; i.e.,  $X=M(X)\textsf{A}.$ It turns out that
$$
X=\{x\in M(X):\la x,v\ra \in \A,\,\forall v\in M(X)\}.
$$
The extended module $M(X)$ is called the multiplier module of $X.$ It is known that $M(X)$ can be naturally identified with $\Bbb B(\textsf{A},X).$ If $\textsf{A}$ is unital, or if $X$ is AFG, $M(X)$ coincides with $X.$
    For each $v\in M(X)$ we have
    $$\|v\|=\sup\{\|va\|:a\in \textsf{A},\,\|a\|\le 1\}
    =\sup\{\|\la v,x\ra\|:x\in X,\,\|x\|\le 1\}.$$
In particular, if $\A$ is a $C^*$-algebra and if one takes $X=A$, then it turns out that the multiplier module $M(X)$ coincides with $M(\A)$.

\vspace{.1in}
(b) The strict topology on $M(X)$ is locally convex topology generated by the family of seminorms $v \mapsto \|va\|,\,a \in \textsf{A},$ and $v \mapsto \| \langle v,x \rangle \|,\, x \in X.$ The multiplier module $M(X)$ is complete with respect to the strict topology. If $(e_{\lambda})_{\lambda}$ is an approximate unit for $\textsf{A},$ then each $v \in M(X)$ satisfies $v = \mbox{(strict)}\lim_{\lambda} ve_{\lambda}.$ Hence, $X$ is strictly dense in $M(X).$ In fact, $M(X)$ is the strict completion of $X.$

\vspace{.1in}

(c) For the generalized Hilbert space $\ell^2(\A)$ over $\A$ we get
$$M(\ell^2(\textsf{A}))=\left\{(c_n)_n \in
M(\textsf{A})^{\Bbb N}:
\sum_{n=1}^{\infty}c_n^*c_n \mbox{\,converges strictly}\right\}$$ and the $M(\textsf{A})$-valued inner product on $M(\ell^2(\textsf{A}))$ is given by
$$\la (c_n)_n,(d_n)_n\ra=\mbox{(strict)}\,\sum_{n=1}^{\infty}c_n^*d_n.$$
The set $c_{00}(M(\textsf{A}))$ of all finite sequences of elements of $M(\textsf{A})$ is strictly dense in $M(\ell^2(\textsf{A})).$

\vspace{.1in}

(d) If $Y$ is a Hilbert $\textsf{A}$-module, each operator $T \in \Bbb B(X,Y)$ has an extension $T_M \in \Bbb B(M(X),M(Y)).$ The extended operator $T_M$ is obtained as the strict continuation of $T$; hence, it is uniquely determined.
    The map $T\mapsto T_M$ is a bijection of $\Bbb B(X,Y)$ and  $\Bbb B(M(X),M(Y))$ such that $\|T_M\|=\|T\|$ and $(T_M)^*=(T^*)_M$ for all $T$ in $\Bbb B(X,Y)$.
\end{remark}

\vspace{.1in}

We now introduce the concept of an outer frame for Hilbert $C^*$-modules. In comparison with frames for $X$ the difference is that the
elements of an outer frame for $X$ are merely members of a larger module $M(X)$ and need not belong to $X.$

\begin{definition}\label{real outer frames}
Let $X$ be a Hilbert $C^*$-module. A sequence $(v_n)_n$ in $M(X)$ is called an \emph{outer frame} for $X$ if   $v_n\in M(X)\setminus X$ for at least one  $n\in\Bbb N,$ and if there exist positive constants $A$ and $B$ such that
\begin{equation}\label{outer-df}
A\la x,x\ra\le \sum_{n=1}^{\infty}\langle x,v_n\rangle \langle v_n,x\rangle\le B\la x,x\ra,\quad\forall x \in X,
\end{equation}
where the series $\sum_{n=1}^{\infty}\langle x,v_n\rangle \langle v_n,x\rangle$ converges in norm of $\A.$

If $A=B=1,$ the sequence $(v_n)_n$ is called an \emph{outer Parseval frame} for $X.$

A sequence $(v_n)_n$ is said to be an \emph{outer Bessel sequence} if only the second inequality in (\ref{outer-df}) is satisfied.
\end{definition}

\vspace{.1in}
Notice that each $\la v_n,x\ra$ belongs to $\A$ for every $x \in X,$ even for those $n$ for which $v_n\in M(X)\setminus X$; this is a consequence of Remark~\ref{multiplier modules}(a).

We also note that outer Parseval frames (though, not under that name) appeared already in \cite{RT} in the context of a generalized version of Kasparov's stabilization theorem.

\begin{remark}\label{no outer frames}
By definition, outer frames do not exist if $X$ is strictly complete, i.e.,  if $M(X)=X$ (by Remark~\ref{multiplier modules}(a), this is the case when $A$ is unital, or when $X$ is AFG).

If $X$ is a countably generated Hilbert $\A$-module such that $M(X)\neq X$ then outer frames exist in abundance. To obtain an outer frame for $X$ we can simply add any vector from $M(X)\setminus X$ to an arbitrary frame for $X.$
\end{remark}

\vspace{.1in}

Let us now show that the sequence from Example~\ref{fail 2} is an outer frame.

\begin{example}\label{fail 2 revisited}
Let us keep  the notations from Example~\ref{fail 2}.
We have seen that
$\lim_{N\rightarrow \infty}\|a-\sum_{n=1}^Np_na\|=0$ for each  $a\in \Bbb K(H).$
This conclusion can be rewritten in the modular context in the form
$$a=\lim_{N\rightarrow \infty}\sum_{n=1}^Np_na=\lim_{N\rightarrow \infty}\sum_{n=1}^N s_n\la s_n,a\ra=
\sum_{n=1}^{\infty} s_n\la s_n,a\ra,
$$
with the norm convergence of the series at the end (recall that the norm on the Hilbert $\Bbb K(H)$-module $\Bbb K(H)$ coincides with the original, i.e.,  operator norm on $\Bbb K(H)$).
By taking the inner product of both sides by $a$ we get
$$
\la a,a\ra=\sum_{n=1}^{\infty} \la a,s_n\ra \la s_n,a\ra,\quad \forall a\in \Bbb K(H).
$$
Thus, $(s_n)_n$ is, being a sequence in $\Bbb B(H)\setminus \Bbb K(H),$ an outer Parseval frame for $\Bbb K(H).$
\end{example}

\vspace{.1in}

We begin our study of outer frames by introducing their analysis and synthesis operators. It turns out that these operators have the same properties as the corresponding operators for frames.

\begin{prop}\label{analysis op outer f}
Let $(v_n)_n$ be an outer frame for a Hilbert $\A$-module $X.$ Then its analysis operator
$$
U: X \to \ell^2(\textsf{A}),\quad U(x)=(\la v_n,x\ra)_n,
$$
is well defined, adjointable and bounded from below. The synthesis operator $U^*$ is surjective and satisfies
$$U^*((a_n)_n)=\sum_{n=1}^{\infty}v_na_n,\quad \forall (a_n)_n\in \ell^2(\textsf{A}),$$ where this series converges in norm.
\end{prop}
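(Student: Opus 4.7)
The plan is to follow essentially the same argument used for Bessel sequences in Theorem~\ref{Bessel relaxed} and for the analysis operator of a frame discussed in the introduction, adding one structural observation: although the $v_n$ live in the larger module $M(X),$ for every $x\in X$ and every $a\in \A$ both $\la v_n,x\ra$ and $v_na$ stay inside $\A$ and $X$ respectively, by Remark~\ref{multiplier modules}(a). So the whole computation takes place inside $X,$ $\A,$ and $\ell^2(\A)$ and never needs to leave these spaces.

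First I would verify that $U$ is well defined, $\A$-linear, bounded and bounded below. Since each $\la v_n,x\ra$ lies in $\A,$ the inequality (\ref{outer-df}) rewrites as $A\la x,x\ra \le \la Ux,Ux\ra \le B\la x,x\ra,$ which, after taking norms, gives $\sqrt{A}\|x\|\le \|Ux\|\le \sqrt{B}\|x\|.$

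Next I would define a candidate synthesis map $V:\ell^2(\A) \to X$ by $V((a_n)_n)=\sum_n v_na_n$ and establish norm convergence of the series. Exactly as in Theorem~\ref{Bessel relaxed}, for each finite $F\subseteq \Bbb N,$
$$
\Bigl\|\sum_{n\in F} v_n a_n\Bigr\|^2 = \sup_{\|y\|\le 1}\Bigl\|\sum_{n\in F}\la y,v_n\ra a_n\Bigr\|^2 \le B\,\Bigl\|\sum_{n\in F} a_n^*a_n\Bigr\|,
$$
where the inequality uses the Cauchy-Schwarz inequality in $\A^{|F|}$ followed by the upper outer frame bound. Unconditional summability of $\{a_n^*a_n\}$ in $\A$ then forces unconditional summability of $\{v_na_n\};$ each partial sum lies in $M(X)\A\subseteq X$ and $X$ is norm-closed in $M(X),$ so the limit lies in $X.$ A direct computation $\la Ux,(a_n)_n\ra = \sum_n \la x,v_n\ra a_n = \la x,V((a_n)_n)\ra$ then identifies $V=U^*$ and simultaneously proves adjointability of $U$ and the stated synthesis formula.

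Finally, surjectivity of $U^*$ follows by the chain of arguments already recalled in the introduction for ordinary frames: $U$ bounded below makes $\R(U)$ a closed submodule of $\ell^2(\A)$ and $U$ injective, so by \cite[Corollary~15.3.9]{W-O} we get $\ell^2(\A)=\R(U)\oplus \N(U^*),$ and by \cite[Theorem~15.3.8]{W-O} $\R(U^*)$ is closed; combined with $\N(U)=0$ this forces $\R(U^*)=X.$ The only point that genuinely requires care is the initial structural check that $\la v_n,x\ra \in \A$ and $v_na_n\in X$ — after that, the proof is formally identical to the case of ordinary frames and presents no essential obstacle.
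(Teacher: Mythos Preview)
Your proof is correct. The only technical difference from the paper's own argument is in how you establish adjointability and the synthesis formula: the paper defines $U^*$ first on $c_{00}(\A)$, proves it is bounded there via the adjoint relation $\|U^*y\|=\sup_{\|x\|\le 1}\|\la y,Ux\ra\|\le\sqrt{B}\|y\|$, and then extends by continuity to all of $\ell^2(\A)$; you instead import the Cauchy--Schwarz estimate from Theorem~\ref{Bessel relaxed} to prove directly that $\sum_n v_na_n$ converges in norm for every $(a_n)_n\in\ell^2(\A)$, define $V$ globally, and verify $\la Ux,(a_n)_n\ra=\la x,V((a_n)_n)\ra$. Your route gives unconditional convergence of the synthesis series for free, while the paper's route avoids redoing the summability estimate and is slightly shorter; both rely on exactly the same structural input ($\la v_n,x\ra\in\A$ and $v_na\in X$ from Remark~\ref{multiplier modules}(a)), and the boundedness-from-below and surjectivity parts are identical.
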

\begin{proof}
By defining inequalities \eqref{outer-df}, the operator $U$ is well defined, $\textsf{A}$-linear, bounded by $\sqrt{B},$ and bounded from below by $\sqrt{A}.$ Let us show that $U$ is an adjointable operator.
For $N\in\Bbb N$ and any $y=(a_1,\ldots,a_N,0,\ldots) \in c_{00}(\textsf{A}),$ we put
$$U^*((a_1,\ldots,a_N,0,\ldots))=\sum_{n=1}^Nv_na_n.$$
Observe that all $v_na_n$ belong to $X$ since $M(X)\textsf{A}=X$ (see Remark~\ref{multiplier modules}(a)).
By a routine verification one concludes that
\begin{equation}\label{adjoint phase0}
\langle U^*y,x\rangle=\langle y,Ux\rangle,\quad \forall x\in X,\quad \forall y\in c_{00}(\textsf{A}).
\end{equation}
We now claim that $U^*$ is bounded on $c_{00}(\A).$ Indeed, we have for each $y\in c_{00}(\A)$
\begin{eqnarray*}
\|U^*y\|&=&\sup\{\|\la U^*y,x\ra\|:x\in X, \|x\|\le 1\}\\
&\stackrel{(\ref{adjoint phase0})}{=}&\sup\{\|\la y,Ux\ra\|:x\in X, \|x\|\le 1\}\\
&\le& \sqrt{B} \|y\|.
\end{eqnarray*}
This enables us to extend $U^*$ to all of $\ell^2(\textsf{A})$ by continuity. Moreover, one easily concludes that equality  (\ref{adjoint phase0}) extends then to
$$
\langle U^*y,x\rangle=\langle y,Ux\rangle,\quad \forall x\in X,\quad \forall y\in \ell^2(\textsf{A}).
$$
This proves that $U$ is an adjointable operator.
The preceding discussion also shows that $U^*$ is given by $U^*((a_n)_n)=\sum_{n=1}^{\infty}v_na_n$ for all $(a_n)_n\in \ell^2(\textsf{A}).$ Since $U$ is bounded from below, $U^*$ is surjective.
\end{proof}

\vspace{.1in}

An immediate consequence of (the proof of) the preceding proposition is the corresponding statement concerning outer Bessel sequences.

\begin{cor}\label{adjointable operators are Bessel sequences 1}
Let $(v_n)_n$ be an outer Bessel sequence for a Hilbert $\A$-module $X.$ Then its analysis operator
$$
U: X \to \ell^2(\textsf{A}),\quad U(x)=(\la v_n,x\ra)_n,
$$
is a well defined adjointable operator. The synthesis operator $U^*$ satisfies
$$U^*((a_n)_n)=\sum_{n=1}^{\infty}v_na_n,\quad \forall (a_n)_n\in \ell^2(\textsf{A}),$$ where this series converges in norm.
\end{cor}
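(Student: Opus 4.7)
The plan is to run the same argument used for Proposition~\ref{analysis op outer f}, observing that the lower bound in \eqref{outer-df} was only invoked there to conclude that $U$ is bounded from below (and hence that $U^*$ is surjective); everything else in that proof used only the upper Bessel inequality. So I would simply track the proof of Proposition~\ref{analysis op outer f} and drop the surjectivity conclusion.

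First, the Bessel inequality $\sum_{n=1}^\infty \la x,v_n\ra\la v_n,x\ra \le B\la x,x\ra$ guarantees, for each $x\in X$, that the sequence $(\la v_n,x\ra)_n$ lies in $\ell^2(\A)$ with $\|Ux\|\le \sqrt{B}\,\|x\|$; in particular $U\colon X\to \ell^2(\A)$ is well defined, $\A$-linear and bounded. Note, as in the proof of Proposition~\ref{analysis op outer f}, that each $\la v_n,x\ra$ lies in $\A$ even when $v_n\in M(X)\setminus X$, thanks to Remark~\ref{multiplier modules}(a).

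Next I would define a candidate adjoint on the dense submodule $c_{00}(\A)\subset \ell^2(\A)$ by
\[
U^*(a_1,\ldots,a_N,0,0,\ldots)=\sum_{n=1}^N v_na_n,
\]
noting that each $v_na_n$ lies in $X$ since $M(X)\A=X$. A direct computation then yields $\la U^*y,x\ra=\la y,Ux\ra$ for all $x\in X$ and $y\in c_{00}(\A)$. Using this identity together with the characterization $\|w\|=\sup\{\|\la w,x\ra\|:x\in X,\|x\|\le 1\}$ (valid for $w\in X$), one gets $\|U^*y\|\le \sqrt{B}\,\|y\|$ on $c_{00}(\A)$, so $U^*$ extends by continuity to a bounded operator on all of $\ell^2(\A)$; the adjoint identity persists on the extension, proving that $U$ is adjointable with the extended operator as its adjoint.

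Finally, to obtain the explicit norm-convergent series formula $U^*((a_n)_n)=\sum_{n=1}^\infty v_na_n$ for arbitrary $(a_n)_n\in \ell^2(\A)$, I approximate $(a_n)_n$ by its truncations in $c_{00}(\A)$ and use continuity of $U^*$; the truncations give the partial sums $\sum_{n=1}^N v_na_n$, so these converge in norm to $U^*((a_n)_n)$. There is no real obstacle here — this is a straightforward specialization of Proposition~\ref{analysis op outer f}, and in writing the paper the cleanest route is to remark that the argument for that proposition goes through verbatim without the lower bound, with only the surjectivity conclusion dropped.
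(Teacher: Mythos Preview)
Your proposal is correct and follows essentially the same approach as the paper: the paper simply observes that the corollary is an immediate consequence of (the proof of) Proposition~\ref{analysis op outer f}, and you have spelled out exactly that, noting that the lower frame bound there is used only to obtain boundedness from below of $U$ (hence surjectivity of $U^*$), so the remaining argument goes through verbatim for outer Bessel sequences.
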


\vspace{.1in}

As we shall see, outer frames for a countably generated Hilbert $\A$-module $X$ are exactly what one should add to the set of all frames for $X$ in order to establish a bijective correspondence with surjections from $\Bbb B(\ell^2(\A),X).$ To do that, we need a unified approach to frames and outer frames, and it turns out that this can be done by using another new concept: strict frames for multiplier Hilbert $C^*$-modules.

\begin{definition}\label{outer frames}
Let $X$ be a Hilbert $\textsf{A}$-module. A sequence $(v_n)_n$ in the multiplier module $M(X)$ is called a \emph{strict frame} for  $M(X)$ if there exist positive constants $A$ and $B$ such that
\begin{equation}\label{outer frame definition}
A \la v,v\ra \le \mbox{(strict)}\sum_{n=1}^{\infty}\la v,v_n\ra \la v_n,v\ra \le B \la v,v\ra,\quad\forall v\in M(X).
\end{equation}
If $A=B=1,$ i.e.,  if
\begin{equation}\label{Parseval drugi puta neoprezno}
\mbox{(strict)}\,\sum_{n=1}^{\infty}\la v,v_n\ra \la v_n,v\ra = \la v,v\ra,\quad\forall v\in M(X),
\end{equation}
the sequence $(v_n)_n$ is called a \emph{strict Parseval frame} for $M(X).$
\end{definition}

\vspace{.1in}

\begin{example}\label{kanonska striktna}
Let $\A$ be a non-unital $C^*$-algebra. Then the sequence $(e^{(n)})_n$ is a strict Parseval frame for the multiplier module $M(\ell^2(\A))$.
This follows immediately from Remark~\ref{multiplier modules}(c).
\end{example}

\begin{prop}\label{outer is fine} Let $X$ be a Hilbert $\A$-module.
Every strict frame for $M(X)$ is a frame or an outer frame for $X.$
\end{prop}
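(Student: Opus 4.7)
The plan is to show that, for any strict frame $(v_n)_n$ in $M(X)$ with bounds $A$ and $B,$ the restriction of the frame inequality \eqref{outer frame definition} to $x\in X$ yields a middle series converging in norm of $\A.$ Once this norm convergence is in hand, the two outer inequalities carry over from $M(\A)$ to $\A$ unchanged, and by Definition~\ref{real outer frames} the sequence is then either a frame for $X$ (if every $v_n$ lies in $X$) or an outer frame for $X$ (if at least one $v_n$ lies in $M(X)\setminus X$).

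For the norm-convergence step, I would fix $x\in X$ and use Remark~\ref{multiplier modules}(a) to note that each $\la v_n,x\ra$ lies in $\A,$ so the partial sums $S_N=\sum_{n=1}^{N}\la x,v_n\ra\la v_n,x\ra$ form an increasing sequence of positive elements of $\A$ that converges strictly in $M(\A)$ to some $s$ with $A\la x,x\ra\le s\le B\la x,x\ra.$ Because $\A$ is a closed (hence hereditary) ideal of $M(\A)$ and $s\le B\la x,x\ra\in\A,$ the first observation is that $s$ itself lies in $\A.$ The heart of the argument is then to upgrade the strict convergence $S_N\to s$ to norm convergence, equivalently to show $\|R_N\|\to 0$ for $R_N:=s-S_N\ge 0.$ Given $\varepsilon>0,$ I would pick, from an approximate unit for $\A,$ a positive element $e$ in the unit ball of $\A$ with $\|s-se\|<\varepsilon;$ strict convergence immediately yields $\|R_N e\|\to 0.$ Working in $\tilde{\A}$ and using $\|1-e\|\le 1,$ the operator inequality
$$R_N^{2}\le\|R_N\|R_N\le\|s\|R_N\le\|s\|s$$
(which rests on $0\le R_N\le s$ and hence $\|R_N\|\le\|s\|$) gives
$$\|R_N(1-e)\|^{2}=\|(1-e)R_N^{2}(1-e)\|\le\|s\|\,\|(1-e)s(1-e)\|\le\|s\|\,\|s-se\|<\|s\|\varepsilon,$$
so that $\|R_N\|\le\|R_N e\|+(\|s\|\varepsilon)^{1/2};$ letting first $N\to\infty$ and then $\varepsilon\to 0$ produces $\|R_N\|\to 0,$ as required.

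The main obstacle is the implication \emph{strict convergence in $M(\A)$ of an increasing bounded sequence in $\A,$ whose limit already lies in $\A,$ forces norm convergence}. This cannot be circumvented by a purely formal argument, since on bounded subsets of $M(\A)$ the strict and norm topologies are not equivalent in general. The two decisive ingredients of the sketched proof are the membership $s\in\A$ (supplied by the hereditary-ideal property) and the operator inequality $R_N^{2}\le\|s\|s;$ together with the approximate-unit splitting $R_N=R_N e+R_N(1-e)$ they reduce everything to $\|R_N e\|\to 0,$ which holds by the very definition of strict convergence.
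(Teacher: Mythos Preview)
Your proof is correct but follows a genuinely different route from the paper's. The paper argues via Cohen--Hewitt factorization: it writes an arbitrary $x\in X$ as $x=va$ with $v\in X$ and $a\in\A$ (Proposition~2.31 in \cite{RW}), notes that $\la x,v_n\ra\la v_n,x\ra=a^*\la v,v_n\ra\la v_n,v\ra a$, and observes that strict convergence of $\sum_n\la v,v_n\ra\la v_n,v\ra$ immediately gives norm convergence of $\sum_n\la v,v_n\ra\la v_n,v\ra a$, hence (after left multiplication by $a^*$) of $\sum_n a^*\la v,v_n\ra\la v_n,v\ra a$. This is essentially a two-line argument once the factorization is invoked. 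Your approach instead isolates and proves a self-contained lemma: an increasing sequence in $\A$ that converges strictly in $M(\A)$ to an element already lying in $\A$ must converge in norm. You extract this from the approximate-unit splitting $R_N=R_Ne+R_N(1-e)$ together with the order inequality $R_N^2\le\|s\|s$, and the computation checks out. The paper's proof is shorter because it outsources the analytic work to the factorization theorem; yours is longer but more elementary in that it relies only on approximate units and the hereditary-ideal property of $\A$ in $M(\A)$, and it yields a general order-theoretic principle that is of independent interest.
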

\begin{proof}
Let  $(v_n)_n$ be a strict frame for $M(X).$ By definition of the strict convergence in $M(\textsf{A})$  this implies that the series
$\sum_{n=1}^{\infty}\la v,v_n\ra \la v_n,v\ra a$ is norm convergent in $\textsf{A}$ for all $a\in \textsf{A}.$
Then the series $\sum_{n=1}^{\infty}\la va,v_n\ra \la v_n,va\ra $ is norm convergent for all $v\in M(X)$ and $a\in \textsf{A}.$
Since, by Proposition~2.31 from \cite{RW}, each $x\in X$ can be written in the form $x=va$ for some $v\in X$ and $a\in \textsf{A},$ the preceding discussion shows that
the series $\sum_{n=1}^{\infty}\la x,v_n\ra \la v_n,x\ra$ converges in norm for every $x\in X.$
Now, if each $v_n$ belongs to $X$ then  $(v_n)_n$ is a frame for $X,$ and if some $v_n$ is in $M(X)\setminus X$ then $(v_n)_n$ is an outer frame for $X.$
\end{proof}

\vspace{.1in}

\begin{remark}\label{outer vs standard}
If $X$ is a strictly complete Hilbert $C^*$-module, i.e.,  if $M(X)=X$ (for example, when $\A$ is unital or $X$ is AFG), the preceding proposition implies that strict frames are simply frames for $X.$
\end{remark}

\vspace{.1in}

We begin our study of strict frames by showing that the conditions in the definition of a strict frame can be relaxed in a manner similar to that in Theorem~\ref{frame relaxed}.

\begin{theorem}\label{outer frame relaxed}
Let $X$ be a Hilbert $\textsf{A}$-module and let $(v_n)_n$ be a sequence in $M(X).$ Then the following two conditions are equivalent:
\begin{itemize}
\item[(a)] $(v_n)_n$ is a strict frame for $M(X).$
\item[(b)] The series $\sum_{n=1}^{\infty}\la v,v_n\ra \la v_n,v\ra$  converges strictly for all $v$ in $M(X)$ and there is $A>0$ such that
$A\|v\|^2\le \left\| (\textup{strict})\sum_{n=1}^{\infty}\la v,v_n\ra \la v_n,v\ra \right\|$ for all $v$ in $M(X).$
\end{itemize}
If $(v_n)_n$ is a strict frame for $M(X),$ its analysis operator
\begin{equation}\label{analysis-for-strict}
U: M(X) \rightarrow M(\ell^2(\textsf{A})),\quad U(v)=(\la v_n,v\ra)_n,
\end{equation}
is well defined, adjointable and bounded from below. The synthesis operator $U^*$ is surjective and satisfies
\begin{equation}\label{synthesis-for-strict}
U^*((b_n)_n)=(\textup{strict})\sum_{n=1}^{\infty}v_nb_n,\quad \forall (b_n)_n\in M(\ell^2(\textsf{A})).
\end{equation}
In particular, $v_n=U^*e^{(n)}$ for all $n \in \Bbb N.$
\end{theorem}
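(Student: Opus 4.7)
The plan is to imitate the proof of Theorem~\ref{Bessel relaxed} one level up, working with $M(X)$ as a Hilbert $M(\A)$-module and allowing strict convergence in $M(\A)$ and $M(\ell^2(\A))$ to replace the norm convergence used there. The implication (a)$\Rightarrow$(b) is immediate by taking norms. For (b)$\Rightarrow$(a), the analysis operator $U: M(X) \to M(\ell^2(\A))$, $Uv = (\la v_n, v\ra)_n$, is well-defined and $M(\A)$-linear precisely because of the strict-convergence clause in (b), using the description of $M(\ell^2(\A))$ from Remark~\ref{multiplier modules}(c).

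To show $U$ is bounded I plan to apply the closed graph theorem in the Banach spaces $M(X)$ and $M(\ell^2(\A))$: if $v_k \to v$ in $M(X)$ and $Uv_k \to w$ in $M(\ell^2(\A))$ in norm, then the inequality $\|a_m\|\le \|(a_n)_n\|$ valid in $M(\ell^2(\A))$ gives componentwise convergence, while continuity of the inner product forces $w_m = \la v_m,v\ra$, so $w = Uv$. Paschke's inequality \cite[Theorem~2.8]{pas} then delivers the operator upper bound $\la Uv,Uv\ra \le \|U\|^2 \la v,v\ra$, and the norm lower bound $A\|v\|^2 \le \|Uv\|^2$ from (b) upgrades to the operator inequality $A\la v,v\ra \le \la Uv,Uv\ra$ by \cite[Theorem~2.6]{A} applied to the unital Hilbert $M(\A)$-module $M(X)$, exactly as in the proof of Theorem~\ref{frame relaxed}. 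This establishes (a).

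For adjointability and the synthesis formula, I first define $U^*$ on $c_{00}(M(\A))$ via $U^*(b_1,\ldots,b_N,0,\ldots) = \sum_{n=1}^N v_n b_n$ (each $v_n b_n\in M(X)$), and verify $\la U^*y,v\ra = \la y,Uv\ra$ for $y \in c_{00}(M(\A))$, $v \in M(X)$, which yields $\|U^*y\| \le \|U\|\,\|y\|$. For a general $b = (b_n)_n \in M(\ell^2(\A))$, the key observation is that for each $a\in\A$ the sequence $(b_n a)_n$ lies in $\ell^2(\A)$: right-multiplication by $a$ turns strict convergence of $\sum b_n^*b_n$ into norm convergence of $\sum (b_n a)^*(b_n a)$. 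Combined with the Cauchy--Schwarz argument from the proof of Theorem~\ref{Bessel relaxed} and the upper frame bound established above, this shows $(\sum_{n=1}^N v_n b_n) a$ is norm Cauchy in $X$; a parallel estimate controls the seminorm $v \mapsto \|\la v,x\ra\|$ for $x\in X$. Hence the partial sums form a strict Cauchy net in $M(X)$ with strict limit $U^*b$, the adjointness identity extends to all of $M(\ell^2(\A))$, surjectivity of $U^*$ follows from $U$ being bounded below, and $U^*e^{(n)} = v_n$ is immediate. The main obstacle is precisely this last step: because $c_{00}(M(\A))$ is only strictly (not norm) dense in $M(\ell^2(\A))$, the extension of $U^*$ cannot proceed by ordinary norm continuity and must be handled via the strict-topology seminorms.
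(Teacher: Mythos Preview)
Your outline is correct and follows essentially the same route as the paper: closed graph for boundedness of $U$, definition of $U^*$ on $c_{00}(M(\A))$ with the bound $\|U^*y\|\le\|U\|\,\|y\|$, then a strict-Cauchy argument for the extension to all of $M(\ell^2(\A))$ via the two families of seminorms. Two small points of alignment with the paper: (i) the upgrade from the norm lower bound to the order inequality $A\la v,v\ra\le\la Uv,Uv\ra$ uses \cite[Proposition~2.1]{A}, which requires adjointability of $U$, so in the write-up you should establish adjointability \emph{before} concluding~(a) (the paper does exactly this, invoking \cite[Proposition~2.1]{A} only at the very end); (ii) for the seminorm $v\mapsto\|\la v,x\ra\|$ the paper avoids a direct Cauchy--Schwarz estimate by using the already-verified identity $\la U^*z_N,x\ra=\la z_N,Ux\ra$ together with $Ux\in\ell^2(\A)$, which is slightly cleaner than producing a separate bound on $\|\sum_{N_1}^{N_2} b_n^*b_n\|$.
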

\begin{proof}
Let us first make an observation concerning elements of $M(\ell^2(\textsf{A})).$ For each $(b_n)_n\in M(\ell^2(\textsf{A}))$ we know that $
b:=(\mbox{strict})\,\sum_{n=1}^{\infty}b_n^*b_n$ exists, which means that for all $a\in \textsf{A}$ the series
$\sum_{n=1}^{\infty}ab_n^*b_n$ and $\sum_{n=1}^{\infty}b_n^*b_na$ converge in norm to $ab$ and $ba,$ respectively. In particular, if we assume that $\textsf{A}$ is faithfully and non-degenerately represented on some Hilbert space $H,$
then the series
$\sum_{n=1}^{\infty}b_n^*b_n$ also converges to $b$ in the strong operator topology. This, in particular, implies that
$\sum_{n=1}^{N}b_n^*b_n\le b$ and, consequently,
$\|\sum_{n=1}^{N}b_n^*b_n\|\le \|b\|$ for all $N \in \Bbb N.$

\vspace{.1in}

Let us now assume (b).

By the first assumption in (b), the operator $U:M(X) \rightarrow M(\ell^2(\textsf{A})),$ $U(v)=(\la v_n,v\ra)_n,$ is well defined and $M(\textsf{A})$-linear. By applying the closed graph theorem, precisely as in the first part of the proof of Theorem~\ref{Bessel relaxed}, one shows that $U$ is bounded.
Put $\|U\|^2=B.$

As in the proof of Proposition~\ref{outer is fine} we observe that $Ux\in \ell^2(\textsf{A})$ for each $x\in X.$ Thus, the restriction $U_X$ of $U$ to $X$ takes values in
$\ell^2(\textsf{A}).$ Since norms on $M(X)$ and $M(\ell^2(\textsf{A}))$ extend the original norms on $X$ and $\ell^2(\textsf{A}),$ respectively, we also have $\|U_Xx\|\le \sqrt{B} \|x\|$ for all $x\in X.$

We now prove that $U$ is an adjointable operator. Let us first
define $U^*$ on finite sequences by putting $U^*(b_1,\ldots,b_N,0,\ldots)=\sum_{n=1}^Nv_nb_n$ for each $(b_1,\ldots,b_N,0,\ldots)\in c_{00}(M(\textsf{A})).$ In particular, we have $U^*e^{(n)}=v_n, n\in \Bbb N.$ By a routine computation one finds
\begin{equation}\label{adjoint phase1}
\la z,Uv\ra=\la U^*z,v\ra,\quad\forall z\in c_{00}(M(\textsf{A})),\quad \forall v\in M(X).
\end{equation}
We now claim that $U^*$ is bounded on $c_{00}(M(\textsf{A})).$ Indeed, we have for each $z\in c_{00}(M(\textsf{A}))$
\begin{eqnarray*}
\|U^*z\|&=&\sup\{\|\la U^*z,v\ra\|:v\in M(X), \|v\|\le 1\}\\
&\stackrel{(\ref{adjoint phase1})}{=}&\sup\{\|\la z,Uv\ra\|:v\in M(X), \|v\|\le 1\}\\
&\le& \sqrt{B} \|z\|.
\end{eqnarray*}

Next we claim: if we have $z\in M(\ell^2(\textsf{A}))$ and a net $(z_{\lambda})_{\lambda}$ in $c_{00}(M(\textsf{A}))$ such that $z=(\mbox{strict})\lim_{\lambda} z_{\lambda,}$ then there exists $(\mbox{strict}) \lim_{\lambda} U^*z_{\lambda}$ in $M(X).$
By Remark~\ref{multiplier modules}(b) it is enough to prove that $(U^*z_{\lambda})_{\lambda}$ is a strictly Cauchy net. This means that $((U^*z_{\lambda})a)_{\lambda}$
and $(\la U^*z_{\lambda},x\ra)_{\lambda}$ should be Cauchy nets, for all $a\in \textsf{A}$ and $x\in X.$

First, for each $a\in \textsf{A}$ and $\lambda,\mu,$ we have
$$
\|(U^*z_{\mu})a-(U^*z_{\lambda})a\|=\|U^*(z_{\mu}a-z_{\lambda}a)\|\le \sqrt{B} \|z_{\mu}a-z_{\lambda}a\|.
$$
This is enough, since $(z_{\lambda}a)_{\lambda}$ is a norm convergent net.

Secondly, for each $x\in X,$ we have
$\la U^*z_{\lambda},x\ra=\la z_{\lambda},Ux\ra$; but $(\la z_{\lambda},Ux\ra)_{\lambda}$ is a convergent net since $Ux\in \ell^2(\textsf{A}).$

Let us now fix an arbitrary
$z=(b_1,b_2,b_3,\ldots) \in M(\ell^2(\textsf{A})).$ By Remark~\ref{multiplier modules}(c) we have
$z=(\mbox{strict})\lim_{N\rightarrow \infty} z_N$ with
$z_N=\sum_{n=1}^Ne^{(n)}b_n.$ By the preceding paragraph there exists $(\mbox{strict})\lim_{N\rightarrow \infty} U^*z_N$ in $M(X)$ and we denote this limit by $U^*z.$

Let us now prove that $\la z,Uv\ra=\la U^*z,v\ra$ for all $v\in M(X).$

First, for each $a\in \textsf{A}$ we know by (\ref{adjoint phase1}) that $a\la z_N,Uv\ra=a\la U^*z_N,v\ra$ for all $v\in M(X).$ This implies
\begin{equation}\label{adjoint ending}
\la z_Na^*,Uv\ra=\la (U^*z_N)a^*,v\ra,\quad\forall a\in \textsf{A},\quad\forall v\in M(X).
\end{equation}
Since $\|z_Na^*-za^*\| \rightarrow 0$ as $N$ tends to infinity, the left hand side in (\ref{adjoint ending}) converges to $\la za^*,Uv\ra.$
On the other hand, $\|(U^*z_N)a^*-(U^*z)a^*\| \rightarrow 0$ as $N$ tends to infinity; hence the right hand side in (\ref{adjoint ending}) converges to $\la (U^*z)a^*,v\ra.$ Thus, by letting $N\rightarrow \infty$ in (\ref{adjoint ending}), we obtain
$$a\la z,Uv\ra=a\la U^*z,v\ra,\quad\forall a\in \textsf{A},\quad \forall v\in M(X)$$
or, equivalently,
$$
a(\la z,Uv\ra-\la U^*z,v\ra)=0,\quad\forall a\in \textsf{A},\quad \forall v\in M(X).
$$
This is enough to conclude $\la z,Uv\ra=\la U^*z,v\ra$ for each $v\in M(X).$ As $z$ was arbitrary element of
$M(\ell^2(\textsf{A})),$ we have finally proved that $U$ is an adjointable operator.
Recall that $U^*$ is given by
$$
U^*(b_1,b_2,b_3,\ldots)=(\mbox{strict})\lim_{N\rightarrow \infty}U^*(\sum_{n=1}^Ne^{(n)}b_n)=
(\mbox{strict})\lim_{N\rightarrow \infty}\sum_{n=1}^Nv_nb_n$$
for each $(b_1,b_2,b_3,\ldots)\in M(\ell^2(\textsf{A})).$ In other words,
$$
U^*(b_1,b_2,b_3,\ldots)=(\mbox{strict})\sum_{n=1}^{\infty}v_nb_n,\quad\forall (b_1,b_2,b_3,\ldots) \in M(\ell^2(\textsf{A})).
$$

Furthermore, $U^*$ is, being adjointable, norm-continuous. Since for each $(a_1,a_2,a_3,\ldots) \in \ell^2(\textsf{A})$ we have
$(a_1,a_2,a_3,\ldots)=\lim_{N\rightarrow \infty}\sum_{n=1}^Ne^{(n)}a_n$ with convergence in norm, this gives us
$U^*(a_1,a_2,a_3,\ldots)=\sum_{n=1}^{\infty}v_na_n,$ where this series converges with respect to the norm.
Hence, $U_X : X \rightarrow \ell^2(\textsf{A})$ is also an adjointable operator and we have inequalities
$$
A\|v\|^2\le \|Uv\|^2\le B\|v\|^2,\quad\forall v\in M(X),
$$
$$
A\|x\|^2\le \|U_Xx\|^2\le B\|x\|^2,\quad\forall x\in X.
$$
Proposition 2.1 from \cite{A} now implies
$$
A\langle v,v\ra \le (\mbox{strict})\,\sum_{n=1}^{\infty}\la v,v_n\ra \la v_n,v\ra \le B \la v,v\ra ,\quad\forall v\in M(X),
$$
$$
A\langle x,x\ra \le \sum_{n=1}^{\infty}\la x,v_n\ra \la v_n,x\ra \le B \la x,x\ra ,\quad\forall x\in X.
$$
In particular, since $U$ and $U_X$ are bounded from below, $U^*$ and $(U_X)^*$ are surjective.
\end{proof}

\vspace{.1in}

\begin{prop}\label{surjections preserve frames}
Let $X$ and $Y$ be Hilbert $C^*$-modules
and $T\in \Bbb B(M(X),M(Y)).$ Then $T$ maps strict frames for $M(X)$ to strict frames for $M(Y)$ if and only if $T$ is surjective.
\end{prop}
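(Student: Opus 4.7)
The plan is to reduce the statement to a straightforward surjectivity argument about compositions of adjointable operators. Given a strict frame $(v_n)_n$ for $M(X)$ with analysis operator $U\colon M(X)\to M(\ell^2(\A))$, $U(v)=(\la v_n,v\ra)_n$, I would first compute the analysis operator of the image sequence $(Tv_n)_n$. For every $y\in M(Y)$,
$$(\la Tv_n,y\ra)_n=(\la v_n,T^*y\ra)_n=U(T^*y),$$
which shows both that $(\la Tv_n,y\ra)_n\in M(\ell^2(\A))$ (so, by Remark~\ref{multiplier modules}(c), the series $\sum_n\la y,Tv_n\ra\la Tv_n,y\ra$ converges strictly in $M(\A)$) and that the analysis operator $W$ of $(Tv_n)_n$ equals $UT^*$. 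In particular $W$ is adjointable with $W^*=TU^*$.

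For the ``if'' direction I would note that, by Theorem~\ref{outer frame relaxed}, $U^*$ is surjective; if $T$ is surjective as well, then $W^*=TU^*$ is a composition of surjective adjointable maps, hence surjective, which is equivalent to $W$ being bounded below (the standard equivalence for adjointable operators between Hilbert $C^*$-modules, obtained by applying Corollary~15.3.9 of~\cite{W-O} to $S^*$ to get injectivity of $S$ with closed range, together with an open mapping argument). Applying the implication (b)$\Rightarrow$(a) of Theorem~\ref{outer frame relaxed} to $W$ then yields that $(Tv_n)_n$ is a strict frame for $M(Y)$. For the ``only if'' direction I would pick any strict frame $(v_n)_n$ for $M(X)$; the hypothesis delivers that $W^*=TU^*$ is surjective, and since $U^*$ is already surjective the identity $\R(T)=T(\R(U^*))=\R(TU^*)=M(Y)$ forces $T$ to be surjective.

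The only genuinely non-formal point is to ensure that a strict frame for $M(X)$ is available for the ``only if'' part, since otherwise the hypothesis would be vacuous. This is supplied by the paper's standing countable generation assumption: Theorem~\ref{Parseval by Brown} together with Proposition~\ref{parseval frames are approximate units} produces a Parseval frame $(x_n)_n$ for $X$ whose analysis operator $U_X\colon X\to\ell^2(\A)$ is an isometry; since extension to the multiplier module is a $*$-preserving isometric homomorphism $\Bbb B(X,\ell^2(\A))\to\Bbb B(M(X),M(\ell^2(\A)))$, its strict continuation remains an isometry, so by Theorem~\ref{outer frame relaxed} the sequence $(x_n)_n\subset X\subset M(X)$ is a strict Parseval frame for $M(X)$. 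Once this existence is in place, the rest of the argument is the formal composition sketched above; no delicate convergence issues arise, because the identity $W=UT^*$ takes care of both the well-posedness of the series and the bounded-below-ness of $W$ simultaneously.
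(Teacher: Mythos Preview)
Your proof is correct and follows essentially the same route as the paper: both identify the analysis operator of $(Tv_n)_n$ as $W=UT^*$, use surjectivity of $W^*=TU^*$ in the ``if'' direction (the paper computes the explicit lower bound $Am^2\|w\|^2$ via $\la w,Tv_n\ra=\la T^*w,v_n\ra$, while you invoke the equivalence ``$W^*$ surjective $\Leftrightarrow$ $W$ bounded below'' and then Theorem~\ref{outer frame relaxed}; these are the same argument), and for the ``only if'' direction both deduce surjectivity of $T$ from surjectivity of $TU^*$. Your explicit remark that a strict frame for $M(X)$ must exist for the converse to be non-vacuous is a point the paper passes over in silence; note that you could cite Theorem~\ref{outer frames are strict frames} directly (every frame for $X$ is a strict frame for $M(X)$) rather than rederiving it via extension of the isometry.
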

\begin{proof}
Let $(v_n)_n$ be a strict frame for $M(X)$ and $T\in \Bbb B(M(X),M(Y))$ a surjective operator. Denote by $A$ and $B$ the frame bounds of $(v_n)_n.$ Since $T$ is a surjection, $T^*$ is bounded from below, so there exists $m>0$
such that $\|T^*w\|\geq m \|w\|$ for all $w \in M(Y).$
Let $w_n=Tv_n,\,n\in \Bbb N.$
Observe that, for each $w\in M(Y),$ we have $\la  w,w_n\ra \la w_n,w\ra=\la T^*w,v_n\ra \la v_n,T^*w\ra.$ Therefore, there exists
$(\mbox{strict})\,\sum_{n=1}^{\infty}\la  w,w_n\ra \la w_n,w\ra.$ Moreover, for each $w\in M(Y),$
\begin{eqnarray*}
\left\|(\mbox{strict})\,\sum_{n=1}^{\infty}\la  w,w_n\ra \la w_n,w\ra\right\|&=&\left\|(\mbox{strict})\,\sum_{n=1}^{\infty}\la T^*w,v_n\ra \la v_n,T^*w\ra\right\|\\
 &\geq&A\left\|\left\langle T^*w,T^*w\right\rangle\right\|\\
 &=&A\left\|T^*w\right\|^2\\
 &\geq&Am^2\|w\|^2.
\end{eqnarray*}
By Theorem~\ref{outer frame relaxed}, $(w_n)_n$ is a strict frame for $M(Y).$

Conversely, suppose that  $T\in \Bbb B(M(X),M(Y))$ preserves strict frames. So, if $(v_n)_n$ is a strict frame for $M(X),$ then $(Tv_n)_n$ is  a strict frame for $M(Y).$ If $U$ is the analysis operator for $(v_n)_n,$ then $UT^*$ is the analysis operator for $(Tv_n)_n,$ so $TU^*$ is the corresponding synthesis operator.
By Theorem~\ref{outer frame relaxed}, $TU^*$ is surjective; hence, $T$ must be surjective.
\end{proof}

\vspace{.1in}

As a direct consequence we now get an analog of Proposition~\ref{frames vs surjections} for strict frames. Note that here we do not need any assumptions on the underlying $C^*$-algebra $\A.$ In fact, when $\A$ is unital the statement of the following corollary reduces, due to Remark~\ref{outer vs standard} and Remark
\ref{multiplier modules}(a), to Proposition~\ref{frames vs surjections}.

\begin{cor}\label{Msurjekcije}
If $T\in \Bbb B(M(\ell^2(\textsf{A})),M(X))$ is a surjection, then there exists a unique strict frame for $M(X)$ whose synthesis operator is equal to $T.$
\end{cor}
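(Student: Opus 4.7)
The plan is to mimic the unital argument from Proposition~\ref{frames vs surjections}, but now carried out inside the multiplier module using the tools developed in this section.

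First, I would set $v_n := Te^{(n)}$ for each $n\in\Bbb N$. To see that $(v_n)_n$ is a strict frame for $M(X)$, I would invoke Example~\ref{kanonska striktna}, which says that the canonical sequence $(e^{(n)})_n$ is a strict Parseval frame for $M(\ell^2(\A))$. Since $T$ is by hypothesis an adjointable surjection from $M(\ell^2(\A))$ onto $M(X)$, Proposition~\ref{surjections preserve frames} gives at once that the image sequence $(Te^{(n)})_n=(v_n)_n$ is a strict frame for $M(X)$.

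Next I would identify the synthesis operator. Let $U\colon M(X)\to M(\ell^2(\A))$ be the analysis operator of $(v_n)_n$ as described in \eqref{analysis-for-strict}. By Theorem~\ref{outer frame relaxed}, $U^*e^{(n)}=v_n=Te^{(n)}$ for all $n\in\Bbb N$. Both $U^*$ and $T$ are $M(\A)$-linear, so they agree on every finite combination $\sum_{n=1}^{N}e^{(n)}b_n$, i.e.\ on all of $c_{00}(M(\A))$. By Remark~\ref{multiplier modules}(d), every adjointable operator on a multiplier module is strictly continuous (it is the unique strict continuation of an adjointable operator between the underlying modules), and by Remark~\ref{multiplier modules}(c) the set $c_{00}(M(\A))$ is strictly dense in $M(\ell^2(\A))$. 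Strict continuity therefore forces $U^*=T$ on all of $M(\ell^2(\A))$, so $T$ is indeed the synthesis operator of $(v_n)_n$.

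Finally, uniqueness is immediate: if $(w_n)_n$ is any strict frame for $M(X)$ whose synthesis operator equals $T$, then the last assertion of Theorem~\ref{outer frame relaxed} gives $w_n=Te^{(n)}=v_n$ for every $n$. The only real subtlety in the argument is the passage from agreement on $\{e^{(n)}:n\in\Bbb N\}$ to agreement on all of $M(\ell^2(\A))$; this is not a norm-density issue but a strict-density one, and it is handled precisely by the strict continuity of adjointable operators recorded in Remark~\ref{multiplier modules}(d).
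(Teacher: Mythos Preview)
Your proof is correct and follows essentially the same approach as the paper's own proof: define $v_n=Te^{(n)}$, use Example~\ref{kanonska striktna} and Proposition~\ref{surjections preserve frames} to see that $(v_n)_n$ is a strict frame, then verify $U^*=T$ by noting that both agree on finite sequences and are strictly continuous, with strict density of finite sequences in $M(\ell^2(\A))$ giving equality. The only cosmetic difference is that the paper phrases the density argument via $c_{00}(\A)$ rather than $c_{00}(M(\A))$, but this is immaterial.
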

\begin{proof}
Since $(e^{(n)})_n$ is a strict frame for $M(\ell^2(\textsf{A}))$ (see Example~\ref{kanonska striktna}), the preceding proposition implies that $(v_n)_n$ defined by  $v_n=Te^{(n)},\,n\in \Bbb N,$ is a strict frame for $M(X).$ Its synthesis operator $U^*$ also satisfies, by the last assertion of Theorem~\ref{outer frame relaxed}, $U^*e^{(n)}=v_n$ for all $n \in \Bbb N.$ This implies that $U^*$ and $T$ coincide on $c_{00}(\A).$ Since $c_{00}(\A)$ is strictly dense in $M(\ell^2(\A))$ and both $U^*$ and $T$ are strictly continuous (see Remark~\ref{multiplier modules}(d)), this is enough to conclude that $U^*=T.$
Uniqueness is evident.
\end{proof}

\vspace{.1in}

Next we show the reconstruction property of strict frames.

Let $X$ be a Hilbert $\textsf{A}$-module. Suppose that $(v_n)_n$ is a strict frame for  $M(X)$ with the analysis operator $U\in \Bbb B(M(X), M(\ell^2(\textsf{A}))).$ Then $U^*U$ is an invertible operator for which we have
$$
U^*Uy=(\mbox{strict})\sum_{n=1}^{\infty}v_n\la v_n,y\ra,\quad\forall y\in M(X).
$$
If we put $U^*Uy=v,$ this can be rewritten as
$$
v=(\mbox{strict})\sum_{n=1}^{\infty}v_n\la v_n,(U^*U)^{-1}v\ra=(\mbox{strict})\sum_{n=1}^{\infty}v_n\la (U^*U)^{-1}v_n,v\ra$$
for all $v\in M(X).$
Let $w_n=(U^*U)^{-1}v_n,\,n\in \Bbb N.$ By the preceding corollary $(w_n)_n$ is also a strict frame for $M(X)$ that satisfies
\begin{equation}\label{vanjski duali}
v=(\mbox{strict})\sum_{n=1}^{\infty}v_n\la w_n,v\ra,\quad\forall v\in M(X).
\end{equation}
If we denote by $V$ the analysis operator of $(w_n)_n$ then the above equality can we rewritten as $U^*V=I.$ This obviously implies $V^*U=I,$ so we also have
\begin{equation}\label{vanjski duali bis}
v=(\mbox{strict})\sum_{n=1}^{\infty}w_n\la v_n,v\ra,\quad\forall v\in M(X).
\end{equation}

By the last part of the proof of Theorem~\ref{outer frame relaxed} the above two equalities give us, with the respect to the norm topology on $X,$
\begin{equation}\label{vanjski unutra}
x=\sum_{n=1}^{\infty}v_n\la w_n,x\ra=\sum_{n=1}^{\infty}w_n\la v_n,x\ra,\quad\forall x\in X.
\end{equation}

In particular, if $(v_n)_n$ is a strict Parseval frame for $M(X),$ the above equalities reduce to
\begin{equation}\label{vanjski duali parseval}
v=(\mbox{strict})\sum_{n=1}^{\infty}v_n\la v_n,v\ra,\quad\forall v\in M(X),
\end{equation}
and
\begin{equation}\label{vanjski unutra parseval}
x=\sum_{n=1}^{\infty}v_n\la v_n,x\ra,\quad\forall x\in X.
\end{equation}

\vspace{.1in}

\begin{remark}\label{countably generated}
If $(x_n)_n$ is a frame for $X,$ the reconstruction property from Theorem~\ref{full reconstruction} shows that $X$ is countably generated. Similarly, if $(v_n)_n$ is a strict frame for $M(X),$ the reconstruction formula \eqref{vanjski unutra} shows that $X$ is countably generated by $v_n$'s and the only difference is that here the generators (frame members) are elements of $M(X)$ and need not belong to $X.$ This property is introduced and discussed in \cite{RT}. By Definition 2.1 from \cite{RT}, a Hilbert $\A$-module $X$ is countably generated in $M(X)$ if there exists a sequence $(v_n)_n$ in $M(X)$ such that the set $\text{span}\{v_na:n\in \Bbb N, a\in \A\}$ is norm-dense in $X$. It is proved in \cite{RT} that each Hilbert $C^*$-module $X$ that is countably generated in $M(X)$ possesses a Parseval frame or an outer Parseval frame. Finally, the reconstruction property  (\ref{vanjski unutra parseval}) for such frames is derived in Theorem 3.4 in \cite{RT}.
\end{remark}

\vspace{.1in}

\begin{remark}\label{parseval frames are approximate units strict version}
Having obtained the reconstruction formulae for strict frames (in particular, (\ref{vanjski unutra parseval})), we can now generalize the statement of Proposition~\ref{parseval frames are approximate units} in the following way:
Let $X$ be a Hilbert $C^*$-module. Then a sequence $(v_n)_n$ of elements of $M(X)$ is a strict Parseval frame for $X$ if and only if the sequence $(\sum_{n=1}^N\theta_{v_n,v_n})_N$ has the property
$T=\lim_{N\rightarrow \infty}T(\sum_{n=1}^N\theta_{v_n,v_n})$ for each $T$ in $\Bbb K(X).$

The proof is in fact the same as that of Proposition~\ref{parseval frames are approximate units}, so we omit the details.
\end{remark}

\vspace{.1in}

We are now ready to extend Proposition~\ref{frames vs surjections} to the non-unital case. To do that, we first show that the class of strict frames for the multiplier module $M(X)$ consists precisely of all frames and outer frames for $X.$ Let us start with the following technical result.

\begin{lemma}\label{prosirivanje surjekcija}
Let $X$ and $Y$ be Hilbert $\A$-modules, and $T\in \Bbb B(X,Y).$ Let $T_M\in \Bbb B(M(X),M(Y))$ be the strict extension of $T.$
 \begin{itemize}
   \item[(a)] If  $A$ and $B$ are some positive constants, then
\begin{equation}\label{odozdo omedjen U}
    A\|x\|^2\leq \|Tx\|^2\leq B\|x\|^2,\quad\forall x\in X,
\end{equation}
    if and only if it
\begin{equation}\label{odozdo omedjen U_M}
A\|v\|^2\leq \|T_Mv\|^2\leq B\|v\|^2,\quad\forall v\in M(X).
\end{equation}
   \item[(b)]  $T$ is a surjection if and only if $T_M$ is a surjection.
\end{itemize}
\end{lemma}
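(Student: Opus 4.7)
The plan is to reduce both parts to the norm formula $\|v\|=\sup\{\|va\|:a\in\A,\,\|a\|\leq 1\}$ from Remark~\ref{multiplier modules}(a), combined with the standard characterization that an adjointable operator between Hilbert $C^*$-modules is surjective iff its adjoint is bounded below (which already appeared implicitly in the preceding discussion via Theorem~15.3.8 and Corollary~15.3.9 of \cite{W-O}).

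For part (a), the implication (\ref{odozdo omedjen U_M})$\Rightarrow$(\ref{odozdo omedjen U}) is immediate: $X$ embeds isometrically into $M(X)$ and $T_M$ extends $T$. For the converse, I would fix $v\in M(X)$ and observe that for every $a\in\A$ with $\|a\|\leq 1$, one has $va\in M(X)\A=X$ and, by $M(\A)$-linearity of $T_M$ together with $T_M|_X=T$, the chain of identities $(T_Mv)a=T_M(va)=T(va)$ holds. Applying Remark~\ref{multiplier modules}(a) to $T_Mv\in M(Y)$ gives $\|T_Mv\|=\sup\{\|T(va)\|:a\in\A,\,\|a\|\leq 1\}$; feeding the hypothesized pointwise bound $\sqrt{A}\,\|va\|\leq\|T(va)\|\leq\sqrt{B}\,\|va\|$ into this supremum, and using the same formula for $\|v\|$ on the other side, yields exactly (\ref{odozdo omedjen U_M}).

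For part (b), the equivalence then becomes almost mechanical once one remembers that $(T_M)^*=(T^*)_M$ by Remark~\ref{multiplier modules}(d). Assume $T$ is surjective; then $T^*$ is bounded below, so by part (a) applied to $T^*\in\Bbb B(Y,X)$ (with $A$ the square of the lower bound and $B=\|T^*\|^2$), the extension $(T^*)_M=(T_M)^*$ is bounded below on $M(Y)$, whence $T_M$ is surjective. The converse is symmetric: if $T_M$ is surjective then $(T_M)^*=(T^*)_M$ is bounded below on $M(Y)$; by part (a), $T^*$ is bounded below on $Y$, and hence $T$ is surjective.

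The only mildly delicate step is the supremum passage in part (a), where one has to recognize that the identity $(T_Mv)a=T(va)$ lets the norm formula on $M(X)$ and on $M(Y)$ be compared through the same parameter $a\in\A$; once this is in place, part (b) is essentially a formal consequence. No further approximation arguments (e.g.\ via strict limits $v=(\text{strict})\lim_\lambda ve_\lambda$) are needed, although the norm formula itself is a consequence of the existence of an approximate unit.
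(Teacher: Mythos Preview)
Your proposal is correct and follows essentially the same route as the paper: part (a) is proved via the norm formula $\|v\|=\sup\{\|va\|:a\in\A,\,\|a\|\le 1\}$ from Remark~\ref{multiplier modules}(a) together with the identity $(T_Mv)a=T(va)$, and part (b) is deduced from (a) by passing to adjoints using $(T_M)^*=(T^*)_M$ and the standard equivalence ``surjective $\Leftrightarrow$ adjoint bounded below.'' The paper's proof differs only in superficial presentation.
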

\begin{proof}
Let us prove (a).
If $M(X)=X,$ there is nothing to prove. Hence, we assume that $M(X)\not =X.$
Obviously, \eqref{odozdo omedjen U_M} implies \eqref{odozdo omedjen U}, so we only need to prove the converse. First we recall a useful result from Remark~\ref{multiplier modules}(a), namely
\begin{equation}\label{surjections generic3}
\|v\|=\sup\{\|va\|: a \in \textsf{A}, \|a\|\le 1\},\quad\forall v\in M(X).
\end{equation}
By applying this formula to $U_Mv\in M(Y)$ for $v\in M(X),$ we get
\begin{eqnarray*}
\|U_Mv\|&=&\sup\left\{\|(U_Mv)a\|: a \in \textsf{A}, \|a\|\le 1\right\}\\
&=& \sup\left\{\|U_M(va)\|: a \in \textsf{A}, \|a\|\le 1\right\}\\
&&\textup{(since $va\in X$)}\\
&=&\sup\left\{\|U(va)\|: a \in \textsf{A}, \|a\|\le 1\right\}.
\end{eqnarray*}
Using the first inequality from the hypothesis we get
$$\|U_Mv\|\ge \sqrt{A}\, \sup\left\{\|va\|: a \in \textsf{A}, \|a\|\le 1\right\}
\stackrel{(\ref{surjections generic3})}{=}\sqrt{A}\,\|v\|,$$
while the second inequality from the hypothesis gives us
$$\|U_Mv\|\le \sqrt{B}\, \sup\left\{\|va\|: a \in \textsf{A}, \|a\|\le 1\right\}
\stackrel{(\ref{surjections generic3})}{=}\sqrt{B}\,\|v\|.$$

To prove (b) suppose first that $T$ is a surjection. If $M(Y)=Y$ then, trivially, $T_M$ is a surjection. So, let us assume that $M(Y)\not =Y$, observe that $T^*$ is bounded from below and hence, by (a), that $(T^*)_M$ is also bounded from below.
Recall from Remark~\ref{multiplier modules}(d) that $(T^*)_M=(T_M)^*.$
By the preceding conclusion we know that  $(T_M)^*$ is bounded from below; thus, $T_M$ is a surjection.
Suppose now that $T_M$ is a surjection. Then $(T_M)^*=(T^*)_M$ is bounded from below. By (a), $T^*$ is also bounded from below which implies that $T$ is a surjection.
\end{proof}

\begin{theorem}\label{outer frames are strict frames}
Let $X$ be a Hilbert $\textsf{A}$-module and $(x_n)_n$ a sequence in $M(X).$ Then  $(x_n)_n$ is a strict frame for $M(X)$ if and only if  $(x_n)_n$ is a frame or an outer frame for $X.$
\end{theorem}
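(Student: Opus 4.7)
One direction — that every strict frame for $M(X)$ is either a frame or an outer frame for $X$ — is already established by Proposition~\ref{outer is fine}, so the real work is the converse. My plan is to reduce it to Corollary~\ref{Msurjekcije} by passing to the strict extension of the synthesis operator of $(x_n)_n$.

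First, starting from a frame or outer frame $(x_n)_n$ for $X,$ I would invoke Proposition~\ref{analysis op outer f} (or, in the frame case, the introductory discussion together with Theorem~\ref{Bessel relaxed}) to obtain the adjointable analysis operator $U\in\Bbb B(X,\ell^2(\A)),$ $Ux=(\la x_n,x\ra)_n,$ which is bounded from below; its adjoint $U^*$ is the surjective synthesis operator given by $U^*((a_n)_n)=\sum_{n=1}^\infty x_na_n.$

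Next I would lift everything to the strict completion: let $(U^*)_M\in\Bbb B(M(\ell^2(\A)),M(X))$ be the strict extension of $U^*$ (Remark~\ref{multiplier modules}(d)). By Lemma~\ref{prosirivanje surjekcija}(b), surjectivity of $U^*$ passes to $(U^*)_M.$ Corollary~\ref{Msurjekcije} then produces a unique strict frame $(y_n)_n$ for $M(X)$ whose synthesis operator equals $(U^*)_M,$ and Theorem~\ref{outer frame relaxed} lets me read off $y_n=(U^*)_Me^{(n)}.$

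The decisive step — and the only point not already taken care of by previous results — is the identification $y_n=x_n$ for every $n\in\Bbb N.$ For this I would fix any approximate unit $(e_\lambda)_\lambda$ for $\A,$ observe via Remark~\ref{multiplier modules}(c) that $e_\lambda^{(n)}\to e^{(n)}$ strictly in $M(\ell^2(\A)),$ and combine strict continuity of $(U^*)_M$ with Remark~\ref{multiplier modules}(b) to compute
$$
y_n=(U^*)_Me^{(n)}=\mbox{(strict)}\lim_\lambda U^*(e_\lambda^{(n)})=\mbox{(strict)}\lim_\lambda x_ne_\lambda=x_n.
$$
Hence $(x_n)_n=(y_n)_n$ is a strict frame for $M(X),$ completing the proof. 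The main obstacle is precisely this identification: everything else is a syntactic application of earlier results, but the last display rests on the interaction of the strict extension with the canonical elements $e^{(n)}$ and on the strict density $x_ne_\lambda\to x_n$ in $M(X).$
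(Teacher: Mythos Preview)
Your proof is correct and follows essentially the same route as the paper: reduce one direction to Proposition~\ref{outer is fine}, and for the converse extend the synthesis operator $U^*$ to the surjection $(U^*)_M$ via Lemma~\ref{prosirivanje surjekcija}, then identify $(U^*)_Me^{(n)}=x_n$ and conclude via Corollary~\ref{Msurjekcije} (the paper invokes Proposition~\ref{surjections preserve frames} directly, of which the corollary is an immediate consequence). Your approximate-unit computation for $(U^*)_Me^{(n)}=x_n$ is a clean explicit justification of a step the paper records only as ``Observe that $(U^*)_Me^{(n)}=x_n$.''
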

\begin{proof}
If $M(X)=X$ then, by Remarks~\ref{no outer frames} and~\ref{outer vs standard}, there is nothing to prove. So, let us assume that $M(X)\not =X.$
One direction is proved in Proposition~\ref{outer is fine}.

Suppose $(x_n)_n$ is a frame or an outer frame for $X$ with frame bounds $A$ and $B.$
Denote by $U\in \Bbb B(X,\ell^2(\textsf{A}))$ the corresponding analysis operator. Then $U$ is bounded by $\sqrt{B}$ and bounded from below by $\sqrt{A}$ - if $(x_n)_n$ is a frame this is already observed in the introduction, and if $(x_n)_n$ is an outer frame Proposition~\ref{analysis op outer f} applies. So,
$$
A\|x\|^2\le \|Ux\|^2\le B\|x\|^2,\quad\forall x\in X.
$$
Let us now consider the extended operators
$U_M\in \Bbb B(M(X),M(\ell^2(\textsf{A})))$ and
$(U^*)_M\in \Bbb B(M(\ell^2(\textsf{A})),M(X)).$
By Lemma~\ref{prosirivanje surjekcija} we now know that
$$
A\|v\|^2\le \|U_Mv\|^2\le B\|v\|^2,\quad\forall v\in M(X),
$$
and $(U_M)^*=(U^*)_M$ is a surjection. Observe that
$(U^*)_Me^{(n)}=x_n$ for all $n \in \Bbb N.$
By Proposition~\ref{surjections preserve frames}, $(x_n)_n$ is a strict frame for $M(X).$
\end{proof}

\vspace{.1in}

We are now in position to prove the key result of this section.

\begin{theorem}\label{stvarna korespondencija frameova i sinteza}
Let $X$ be a Hilbert $\textsf{A}$-module and let $T\in \Bbb B(\ell^2(\A),X)$ be a surjection. Then there exists a unique frame or outer frame $(x_n)_n$ for $X$ whose synthesis operator coincides with $T.$
\end{theorem}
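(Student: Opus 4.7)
The plan is to reduce the statement to the strict-frame correspondence already established in Corollary~\ref{Msurjekcije}, using the extension/restriction machinery between $X$ and $M(X)$. Given a surjection $T \in \mathbb{B}(\ell^2(\A), X)$, first lift it to its unique strict extension $T_M \in \mathbb{B}(M(\ell^2(\A)), M(X))$ provided by Remark~\ref{multiplier modules}(d). By Lemma~\ref{prosirivanje surjekcija}(b) the extension $T_M$ is again surjective, so Corollary~\ref{Msurjekcije} produces a unique strict frame $(x_n)_n$ for $M(X)$ whose synthesis operator is $T_M$, namely $x_n = T_M e^{(n)}$ for all $n \in \Bbb N$.

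Next I would invoke Theorem~\ref{outer frames are strict frames} to reinterpret this strict frame as an object living inside $X$: depending on whether every $x_n$ lies in $X$ or at least one lies in $M(X)\setminus X$, the sequence $(x_n)_n$ is either a frame or an outer frame for $X$. Either way, its synthesis operator as a frame/outer frame is the map $(a_n)_n \mapsto \sum_n x_n a_n$ on $\ell^2(\A)$, described in Theorem~\ref{Bessel relaxed} and Proposition~\ref{analysis op outer f}. This map is precisely the restriction of $T_M$ to $\ell^2(\A)$, which by construction agrees with $T$.

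For uniqueness, suppose $(y_n)_n$ is any frame or outer frame for $X$ with synthesis operator equal to $T$. By Theorem~\ref{outer frames are strict frames}, $(y_n)_n$ is a strict frame for $M(X)$; call its synthesis operator $S \in \mathbb{B}(M(\ell^2(\A)), M(X))$. Both $S$ and $T_M$ are strictly continuous extensions of $T$ (see Remark~\ref{multiplier modules}(d)), and since $\ell^2(\A)$ is strictly dense in $M(\ell^2(\A))$, they must coincide: $S = T_M$. The uniqueness part of Corollary~\ref{Msurjekcije} then forces $y_n = T_M e^{(n)} = x_n$ for every $n$.

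The only point where care is needed is the compatibility between the two pictures—the synthesis operator of a frame/outer frame as defined inside $X$ versus the strict-frame synthesis operator on $M(X)$—together with the matching uniqueness statements. This is exactly what Lemma~\ref{prosirivanje surjekcija} and Theorem~\ref{outer frames are strict frames} were designed to provide, so once those tools are in place the remaining argument is essentially a bookkeeping exercise and does not present any genuine obstacle.
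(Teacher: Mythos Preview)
Your proposal is correct and follows essentially the same route as the paper: extend $T$ to $T_M$, apply Lemma~\ref{prosirivanje surjekcija} and Corollary~\ref{Msurjekcije} to obtain the strict frame $(x_n)_n=(T_Me^{(n)})_n$, then reinterpret it as a frame or outer frame for $X$ whose synthesis operator is $T$. The only cosmetic differences are that the paper splits off the unital case (citing Proposition~\ref{frames vs surjections} directly) and leaves uniqueness implicit, whereas you run the multiplier-module argument uniformly and spell out uniqueness via the bijection in Remark~\ref{multiplier modules}(d).
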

\begin{proof}
If $\textsf{A}$ is unital, Proposition~\ref{frames vs surjections} applies.

Assume that $\textsf{A}$ is non-unital. By Lemma~\ref{prosirivanje surjekcija}, $T_M\in \Bbb B(M(\ell^2(\A)),M(X))$ is also a surjection.
By Corollary~\ref{Msurjekcije}
the sequence $(v_n)_n$ defined by $v_n=T_Me^{(n)},\,n\in \Bbb N,$ is a strict frame for $M(X)$ whose synthesis operator is equal to $T_M.$
By Proposition~\ref{outer is fine}, $(v_n)_n$ is a frame or an outer frame for $X$ depending on whether all $v_n$'s belong to $X$ or not.
Denote by $U$ the corresponding analysis operator.

By definitions of a frame and an outer frame, we have that $(\langle v_n,x\rangle)_n\in\ell^2(\A)$ for all $x\in X.$ By Proposition~\ref{analysis op outer f} (and the corresponding property of frames observed in the introduction)
we know that $U^*((a_n)_n)=\sum_{n=1}^{\infty}v_na_n,$ where this series converges in norm of $X$ for all $(a_n)_n\in\ell^2(\A).$
It follows from \eqref{synthesis-for-strict} that
$$T((a_n)_n)=T_M((a_n)_n)=\sum_{n=1}^{\infty}v_na_n=U^*((a_n)_n),\quad \forall (a_n)_n\in \ell^2(\textsf{A}).$$
\end{proof}

\vspace{0.15in}

The preceding theorem concludes our description of various classes of frames in terms of corresponding adjointable surjections (i.e., synthesis operators).
The most important statements of this section and their mutual relations are shown in the diagram below.

\vspace{0.15in}
\begin{center}
\begin{tikzpicture}
\node[block](fof){Frames and outer frames for $X$};
\node[block, below of =fof](sf){Strict frames for $M(X)$};
\node[block, right of =fof, xshift=10em](surjX){Surjections in $\Bbb B(\ell^2(\textsf{A}),X)$};
\node[block, right of =sf, xshift=10em](surjMX){Surjections in $\Bbb B(M(\ell^2(\textsf{A})),M(X))$};
\path[line](fof)--node[xshift=3em]{\small{Theorem~\ref{outer frames are strict frames}}} (sf);
\path[line](sf)-- (fof);
\path[line](surjX)--node[xshift=3em]{\small{Lemma~\ref{prosirivanje surjekcija}}} (surjMX);
\path[line](fof)--node[yshift=1em]{\small{Theorem~\ref{stvarna korespondencija frameova i sinteza}}}(surjX);
\path[line](fof)--node[yshift=-1em]{\small{Proposition~\ref{analysis op outer f}}}(surjX);
\path[line](sf)--node[yshift=-1em]{\small{Corollary~\ref{Msurjekcije}}} (surjMX);
\path[line](surjMX)--node[yshift=1em]{\small{Theorem~\ref{outer frame relaxed}}}(sf);
\path[line](surjX)--node[xshift=3em]{}(surjMX);
\end{tikzpicture}
\end{center}

\vspace{0.3in}

\begin{remark}\label{more than this}
Observe the bottom row of the above diagram: Theorem~\ref{outer frame relaxed} and Corollary~\ref{Msurjekcije} establish a correspondence of strict frames for $M(X)$ with adjointable surjections from $M(\ell^2(\textsf{A}))$ to $M(X).$
On the other hand, since $M(\A)$ is a unital $C^*$-algebra, frames for $M(X)$ correspond, by Proposition~\ref{frames vs surjections}, to adjointable surjections from $\ell^2(M(\textsf{A}))$ to $M(X).$

It is clear from the definition of a strict frame that the class of strict frames for $M(X)$ contains the class of frames for $M(X).$ This reflects the fact that, in general,
$M(\ell^2(\textsf{A}))$ is larger than  $\ell^2(M(\textsf{A}))$.

As an example of a strict frame which is not a frame for $M(X)$  take again a strict Parseval frame $(s_n)_n$ for $M(\Bbb K(H))=\Bbb B(H)$ from Example~\ref{fail 2 revisited}. To see that $(s_n)_n$ is not a frame for $\Bbb B(H),$ suppose the opposite. Then we would have $b=\sum_{n=1}^{\infty}s_n\la s_n,b\ra= \sum_{n=1}^{\infty}p_nb$ with the norm convergence for all $b\in \Bbb B(H),$ which is obviously impossible.
\end{remark}

\vspace{.1in}

For reader's convenience we include a short overview of the preceding considerations concerning various classes of frames and their interrelations.

Let $X$ ba a countably generated Hilbert $\A$-module.
\begin{itemize}
\item If $X=M(X)$, i.e., if $X$ is strictly complete (e.g., when $\A$ is unital or when $X$ is AFG) then there are no outer frames for $X$ (Remark~\ref{no outer frames}), strict frames coincide with frames (Theorem~\ref{outer frames are strict frames})
and each surjection in $\Bbb B(\ell^2(\A),X)$ is the synthesis operator of some frame for $X$ (Theorem~\ref{stvarna korespondencija frameova i sinteza}).
\item If $X\not = M(X)$ then, in particular, $\A$ is non-unital and $X$ is not AFG. The class of all strict frames for $M(X)$ consists of two disjoint parts which are the classes of all frames for $X$ and all outer frames for $X$ (see the diagram below). Moreover, if the multiplier module $M(X)$ is countably generated itself, then the class of outer frames for $X$ contains as a subset all frames for $M(X)$. Each surjection in $\Bbb B(\ell^2(\A),X)$ is the synthesis operator of some either frame or outer frame for $X$ (Theorem~\ref{stvarna korespondencija frameova i sinteza}).
\end{itemize}

\vspace{0.15in}
\begin{center}
    \begin{tikzpicture}
\draw[thick] ([shift=(180:1.7cm)]4,3) arc (180:305:1.7cm);
      \node [rotate=330] (c1) at (3.7,2.4){Frames};
      \node [rotate=330] (c2) at (3.5,1.9) {for $M(X)$};
      \draw [thick] (2,1.5) ellipse (3cm and 1.5cm);
      \draw [thick] (1.5,0)--(1.5,3);
      \node (a1) at (0.75,1.5) {Frames};
      \node (a2) at (0.75,1) {for $X$};
      \node [opacity=1](b1) at (2.7,1.5) {Outer frames};
      \node [opacity=1](b2) at (2.7,1) {for $X$};
      \node (a2) at (2,-0.5) {Strict frames for $M(X)$ in the case $X\neq M(X).$};
     \end{tikzpicture}
\end{center}
\vspace{0.15in}

\vspace{.2in}

Finally, we include for future reference an easy consequence of the preceding results concerning Bessel sequences.
The following corollary, together with Theorem~\ref{Bessel relaxed} and Corollary~\ref{adjointable operators are Bessel sequences 1}, establishes a bijective correspondence  between
Bessel and outer Bessel sequences in a Hilbert $C^*$-module $X$ and adjointable operators from $\ell^2(\A)$ to $X.$

\begin{cor}\label{adjointable operators are Bessel sequences 2}
Let $X$ be a Hilbert $\textsf{A}$-module and let $T\in \Bbb B(\ell^2(\A),X).$ Then there exists a unique Bessel sequence or outer Bessel sequence $(x_n)_n$ in $X$ whose synthesis operator coincides with $T.$
\end{cor}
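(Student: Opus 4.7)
The plan is to adapt the strategy of Theorem~\ref{stvarna korespondencija frameova i sinteza}, weakening the surjectivity of $T$ so that the output is a (possibly outer) Bessel sequence rather than a (possibly outer) frame. I would split into the unital and non-unital cases.

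If $\A$ is unital, I would set $x_n := Te^{(n)} \in X$ for $n\in\Bbb N$. The identity $\la x_n, x\ra = \la Te^{(n)}, x\ra = \la e^{(n)}, T^*x\ra = (T^*x)_n$ shows that the putative analysis operator of $(x_n)_n$ is exactly $T^*$; in particular the series $\sum_{n=1}^{\infty}\la x, x_n\ra \la x_n, x\ra$ converges to $\la T^*x, T^*x\ra$ for every $x\in X$. Theorem~\ref{Bessel relaxed} then yields that $(x_n)_n$ is Bessel with analysis operator $T^*$, and formula~\eqref{Bessel-adjoint} together with norm-continuity and agreement on the dense subspace $c_{00}(\A)$ gives $U^* = T$.

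If $\A$ is non-unital, I would extend $T$ to its strict continuation $T_M \in \Bbb B(M(\ell^2(\A)), M(X))$ (Remark~\ref{multiplier modules}(d)) and set $v_n := T_M e^{(n)} \in M(X)$. Using $(T_M)^* = (T^*)_M$ together with the coordinate description of the inner product on $M(\ell^2(\A))$ from Remark~\ref{multiplier modules}(c), the same computation as before delivers $\la v_n, x\ra = (T^*x)_n$ for all $x\in X$ and $n \in \Bbb N$; this lies in $\A$ because $T^*x \in \ell^2(\A)$. Consequently, the series $\sum_{n=1}^{\infty}\la x, v_n\ra \la v_n, x\ra$ converges to $\la T^*x, T^*x\ra$, and the operator inequality $TT^* \le \|T\|^2 I_X$ furnishes the Bessel bound $\sum_{n=1}^\infty \la x, v_n\ra \la v_n, x\ra \le \|T\|^2 \la x, x\ra$. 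Thus $(v_n)_n$ is an ordinary Bessel sequence when every $v_n$ happens to lie in $X$, and an outer Bessel sequence in the sense of Definition~\ref{real outer frames} otherwise. By Corollary~\ref{adjointable operators are Bessel sequences 1} its synthesis operator sends $(a_n)_n \in \ell^2(\A)$ to $\sum v_n a_n$, which agrees with $T$ on the norm-dense subspace $c_{00}(\A)$ and hence coincides with $T$.

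For uniqueness, suppose $(v_n)_n$ and $(w_n)_n$ are two (possibly outer) Bessel sequences in $M(X)$ sharing the synthesis operator $T$. Their analysis operators then both equal $T^*$, so $\la v_n, x\ra = (T^*x)_n = \la w_n, x\ra$ for every $x\in X$ and $n\in\Bbb N$. Applying the formula $\|v\| = \sup\{\|\la v, x\ra\| : x\in X, \|x\|\le 1\}$ from Remark~\ref{multiplier modules}(a) to $v_n - w_n$ forces $v_n = w_n$. The one mildly delicate point is the passage $T \rightsquigarrow T_M$ in the non-unital case: although a priori $(v_n)_n$ lives in $M(X)$, its inner products against elements of $X$ must be shown to land in $\A$ so that Definition~\ref{real outer frames} and Theorem~\ref{Bessel relaxed} genuinely apply; this is ensured by the characterization $X = \{v \in M(X) : \la v, u\ra \in \A\ \forall u\in M(X)\}$ recalled in Remark~\ref{multiplier modules}(a). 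Everything else is routine given the machinery already in place.
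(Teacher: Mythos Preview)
Your proof is correct and follows essentially the same approach as the paper's: split into the unital and non-unital cases, define $x_n = Te^{(n)}$ (respectively $x_n = T_M e^{(n)}$), and verify that $T^*$ is the resulting analysis operator. Your version is more detailed—explicitly justifying the Bessel bound via $TT^* \le \|T\|^2 I_X$ and spelling out the uniqueness argument—but the underlying strategy is identical.
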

\begin{proof}
If $\A$ is unital, put $Te^{(n)}=x_n,\,n\in \Bbb N.$ Then, obviously, $T^*x=(\la x_n,x\ra)_n$ for all $x\in X,$ and, since $T^*$ is bounded, $(x_n)_n$ is a Bessel sequence whose synthesis operator coincides with $T.$

If $\A$ is non-unital, put $T_Me^{(n)}=x_n,\,n\in \Bbb N.$ Again, it follows that $T^*x=(\la x_n,x\ra)_n$ for all $x\in X.$ Moreover, $\sum_{n=1}^{\infty}\la x,x_n\ra \la x_n,x\ra$ converges in norm for all $x \in X.$ From this we conclude that $(x_n)_n$ is a Bessel sequence, which turns out to be outer if at least one $x_n$ belongs to $M(X)\setminus X.$
\end{proof}

\vspace{.1in}

We end the section with some comments on finite frames vs.~adjointable surjections from $\A^N,$ $N\in \Bbb N,$ to the ambient Hilbert $\A$-module $X.$

Let us first extend Definition~\ref{real outer frames} to finite sequences: if $X$ is a Hilbert $\A$-module, we say that a finite sequence $(v_n)_{n=1}^N,$ $N\in \Bbb N,$ in $M(X)$ is an outer frame
for $X$ if  $v_n\in M(X)\setminus X$ for at least one $n\in \{1,\ldots , N\},$ and if there exist positive constants $A$ and $B$ such that
$$
A\la x,x\ra\le \sum_{n=1}^{N}\langle x,v_n\rangle \langle v_n,x\rangle\le B\la x,x\ra,\quad\forall x \in X.
$$
The  analysis operator $U$ is defined as
$$U:X \rightarrow \A^N,\quad Ux=(\la v_n,x\ra)_{n=1}^N.$$
Its adjoint, the synthesis operator $U^*$ is given by
$$U^*(a_1,\ldots , a_N)=\sum_{n=1}^Nv_na_n.$$

\vspace{.1in}

\begin{prop}\label{finite prva napomena}
Let $X$ be a Hilbert $\A$-module.
\begin{itemize}
\item[(a)] If there exists a finite frame for $X,$ then $X$ is AFG and, in particular, there are no outer frames for $X.$
\item[(b)] If there exists a finite outer frame for $X$ then $M(X)\not =X,$ $X$ is not AFG, and $\A$ is a non-unital $C^*$-algebra.
Moreover, then there are no finite frames for $X$ and each finite outer frame for $X$ is a frame for $M(X).$
\end{itemize}
\end{prop}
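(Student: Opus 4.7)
My plan is to handle the two parts of the proposition in turn. For part (a), I will invoke the finite-sum version of the reconstruction formula from Theorem~\ref{full reconstruction}: a finite frame $(x_n)_{n=1}^{N}$ with analysis operator $U$ yields $x=\sum_{n=1}^{N}x_n\la (U^*U)^{-1}x_n,x\ra$ for every $x\in X$, which exhibits $X$ as $\{\sum_{n=1}^{N}x_na_n:a_n\in\A\}$, i.e., as AFG. By Remark~\ref{multiplier modules}(a) this forces $M(X)=X$, and then Remark~\ref{no outer frames} excludes outer frames.

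For part (b), the three structural statements fall out directly from the definitions. The very fact that some $v_n$ lies in $M(X)\setminus X$ forces $M(X)\neq X$, and the contrapositive of the two sufficient conditions collected in Remark~\ref{multiplier modules}(a) then compels $\A$ to be non-unital and $X$ not to be AFG. The non-existence of a finite frame for $X$ follows by contradiction from part (a): such a frame would render $X$ AFG and hence give $M(X)=X$, contradicting what was just proved.

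For the final assertion -- that $(v_n)_{n=1}^{N}$ is itself a frame for the Hilbert $M(\A)$-module $M(X)$ with the same bounds -- my plan is to transfer the frame inequality from $X$ to $M(X)$ through the analysis operator. First I would define $U:X\to\A^{N}$ by $Ux=(\la v_n,x\ra)_{n=1}^{N}$ and verify (following the finite analogue of Proposition~\ref{analysis op outer f}) that $U$ is adjointable with $U^*(a_1,\ldots,a_N)=\sum_{n=1}^{N}v_na_n$ (well defined since $M(X)\A=X$) and that $A\|x\|^{2}\le\|Ux\|^{2}\le B\|x\|^{2}$ for all $x\in X$. Next I would pass to the strict extension $U_M\in\Bbb B(M(X),M(\A^{N}))$; using the identification $M(\A^{N})=M(\A)^{N}$ and the strict approximation $ve_{\lambda}\to v$ along an approximate unit $(e_{\lambda})_\lambda$ of $\A$, one computes $U_Mv=\mbox{(strict)}\lim_{\lambda}U(ve_{\lambda})=(\la v_n,v\ra)_{n=1}^{N}$, which shows that $U_M$ is precisely the analysis operator of $(v_n)_{n=1}^{N}$ read inside the Hilbert $M(\A)$-module $M(X)$. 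Lemma~\ref{prosirivanje surjekcija}(a) applied to $U$ then yields $A\|v\|^{2}\le\|U_Mv\|^{2}\le B\|v\|^{2}$ for all $v\in M(X)$, and a final appeal to \cite[Theorem~2.6]{A} inside the Hilbert $M(\A)$-module $M(X)$ converts these norm estimates into the full inner-product frame inequality.

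The only bookkeeping point that demands any care is the identification $M(\A^{N})=M(\A)^{N}$ together with the verification that $U_M$ acts as $v\mapsto(\la v_n,v\ra)_{n=1}^{N}$ on $M(X)$; once this is in place, every remaining ingredient is a direct application of results already established earlier in the paper.
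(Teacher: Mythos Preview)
Your proof is correct and follows the same overall strategy as the paper. The only difference is cosmetic and concerns the last assertion of (b): you compute $U_Mv=(\la v_n,v\ra)_{n=1}^{N}$ directly via strict approximation and then use Lemma~\ref{prosirivanje surjekcija}(a) together with \cite[Theorem~2.6]{A}, whereas the paper works on the synthesis side---it invokes Lemma~\ref{prosirivanje surjekcija}(b) to see that $(U_M)^*$ is surjective, sets $w_n=(U_M)^*e^{(n)}$ (a frame for $M(X)$ by the argument of Proposition~\ref{frames vs surjections}), and then checks $w_n=v_n$ by evaluating $(U_M)^*a^{(n)}$ in two ways. Both routes are equally direct.
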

\begin{proof}
To prove (a), we only need to recall that, by Remark~\ref{multiplier modules}(a), $M(X)=X$ when $X$ is AFG.

Similarly, if there exists an outer frame for $X$ then, by definition, $M(X) \not =X$ and again Remark~\ref{multiplier modules}(a) implies that then
$X$ is not AFG and $\A$ is non-unital. To prove the last statement in (b) we can argue as follows.

First, observe that $M(\A^N)=M(\A)^N.$ Suppose that  $(v_n)_{n=1}^N$ is an outer frame for $X,$ consider the analysis operator $U\in \Bbb B(X,\A^N)$ and its extension $U_M\in \Bbb B(M(X), M(\A)^N).$ By Lemma~\ref{prosirivanje surjekcija}, $(U_M)^* \in \Bbb B(M(\A)^N, M(X))$ is a surjection. Let $w_n=(U_M)^*e^{(n)},$ $n=1,\ldots , N.$ As in the proof of Proposition~\ref{frames vs surjections}, we easily conclude that $(w_n)_{n=1}^N$ is a frame for $M(X).$ We now claim that $w_n=v_n$ for all $n=1,\ldots , N.$ To see this, take arbitrary $n\in \{1,\ldots , N\}$ and $a\in \A.$
Then
$$
(U_M)^*a^{(n)}=(U_M)^*(e^{(n)}a)=\left((U_M)^*e^{(n)}\right)a=w_na,
$$
and
$$
(U_M)^*a^{(n)}=(U^*)_Ma^{(n)}=U^*a^{(n)}=v_na.
$$
Thus, $w_na=v_na$ and, since $a$ was arbitrary, this is enough to conclude $w_n=v_n.$
\end{proof}

\begin{prop}\label{finite druga napomena}
Let $X$ be a Hilbert $\A$-module and $T\in \Bbb B(\A^N, X),$ $N\in \Bbb N,$ a surjection. Then there exists a unique frame or outer frame $(x_n)_{n=1}^N$ for $X$ whose synthesis operator coincides with $T.$
\end{prop}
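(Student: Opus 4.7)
The plan is to mimic the strategy used for Theorem~\ref{stvarna korespondencija frameova i sinteza}, splitting on whether $\A$ is unital. If $\A$ is unital then $M(X)=X$ and no outer frames exist (Remark~\ref{no outer frames}), so I would simply set $x_n=Te^{(n)}$ for $n=1,\ldots,N$ and argue exactly as in Proposition~\ref{frames vs surjections}: the computation $\langle U^*e^{(n)}, x\rangle=\langle e^{(n)},Ux\rangle=\langle x_n,x\rangle$ for all $x\in X$ yields $U^*=T$, and surjectivity of $T$ together with \cite[Theorem~2.5]{A} makes $(x_n)_{n=1}^N$ a frame for $X$.

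Assume now that $\A$ is non-unital. The idea is to pass to the multiplier level, where $M(\A)$ is unital, apply the unital version, and descend. Using the identification $M(\A^N)=M(\A)^N$ (noted in the proof of Proposition~\ref{finite prva napomena}) together with Remark~\ref{multiplier modules}(d), I take the strict continuation $T_M\in \Bbb B(M(\A)^N, M(X))$, which is a surjection by Lemma~\ref{prosirivanje surjekcija}(b). Define $x_n:=T_M e^{(n)}\in M(X)$, where $e$ denotes the unit of $M(\A)$. Because every $(b_1,\ldots,b_N)\in M(\A)^N$ equals $\sum_n e^{(n)}b_n$, $M(\A)$-linearity of $T_M$ gives $T_M(b_1,\ldots,b_N)=\sum_{n=1}^N x_n b_n$; restricting to $(a_1,\ldots,a_N)\in \A^N$ yields $T(a_1,\ldots,a_N)=\sum_n x_n a_n$, which is exactly the form of the synthesis operator of $(x_n)_{n=1}^N$.

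The next step is to verify that $(x_n)_{n=1}^N$ really is a frame or outer frame for $X$, and to distinguish the two cases according to whether every $x_n$ belongs to $X$. Since $T_M$ is a surjection, $(T_M)^*$ is bounded from below, and a direct check identifies $(T_M)^*$ with the map $v\mapsto (\langle x_n,v\rangle)_{n=1}^N$ on $M(X)$. Applying Lemma~\ref{prosirivanje surjekcija}(a) to the adjoint, this boundedness below restricts to $X$, and by \cite[Theorem~2.6]{A} (or Theorem~\ref{frame relaxed} in its finite variant) it produces constants $A,B>0$ such that $A\langle x,x\rangle\le \sum_{n=1}^N\langle x,x_n\rangle\langle x_n,x\rangle\le B\langle x,x\rangle$ for all $x\in X$. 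By the finite version of Definition~\ref{real outer frames} recorded just before this proposition, $(x_n)_{n=1}^N$ is then a frame for $X$ when all $x_n\in X$ and an outer frame for $X$ otherwise.

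Uniqueness is the easiest part: if $(y_n)_{n=1}^N$ is another such (outer) frame with the same synthesis operator, then $\sum_n (y_n-x_n)a_n=0$ for every $(a_1,\ldots,a_N)\in \A^N$; taking $(a_n)$ supported on a single index, $(y_n-x_n)a=0$ for every $a\in \A$, and Remark~\ref{multiplier modules}(a) forces $y_n=x_n$. The main delicate point I anticipate is bookkeeping rather than depth, namely ensuring that the bounded-below estimate on $M(X)$ genuinely transfers to $X$ even though some $x_n$ may lie outside $X$; this is precisely what Lemma~\ref{prosirivanje surjekcija}(a) is designed for.
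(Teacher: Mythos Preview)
Your proof is correct and follows the same overall architecture as the paper's: split on whether $\A$ is unital, and in the non-unital case pass to $T_M\in\Bbb B(M(\A)^N,M(X))$, set $x_n=T_Me^{(n)}$, and use Lemma~\ref{prosirivanje surjekcija} to transport surjectivity. The one point of divergence is how the frame inequality on $X$ is obtained. The paper observes that, since $M(\A)$ is unital, $(x_n)_{n=1}^N$ is a frame for $M(X)$ by \cite[Theorem~2.5]{A}, and then, in the case all $x_n\in X$, invokes the reconstruction property to deduce $M(X)\subseteq X$, hence $M(X)=X$; this yields the bonus conclusion that $X$ is AFG in that case. You instead transfer the bounded-below estimate for $(T_M)^*$ directly to $T^*$ on $X$ and apply \cite[Theorem~2.6]{A}, which is a bit more streamlined for the bare statement but does not extract that extra structural information. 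Your explicit uniqueness argument via $(y_n-x_n)a=0$ and Remark~\ref{multiplier modules}(a) is fine; the paper leaves uniqueness implicit.
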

\begin{proof}
If $\A$ is unital let $x_n=Te^{(n)},$ $n=1,\ldots N.$ Then, clearly, $(x_n)_{n=1}^N$ is a frame for $X$ whose synthesis operator is $T.$

Suppose now that $\A$ is non-unital. Consider $T_M \in \Bbb B(M(\A)^N, M(X))$ and put $x_n=T_Me^{(n)},$ $n=1,\ldots, N.$ Since, by Lemma~\ref{prosirivanje surjekcija}, $T_M$ is a surjection, $(x_n)_{n=1}^N$ is a frame for $M(X)$ by \cite[Theorem~2.5]{A}.
There are now two possibilities: either each $x_n$ belongs to $X,$ or $x_n\in M(X)\setminus X$ for at least one $n.$

Assume first $x_n\in X$ for all $n=1,\ldots , N.$ By the reconstruction property it follows immediately that $M(X)\subseteq X$; thus, in fact $(x_n)_{n=1}^N$ is a frame for $X$ and, in particular, $X$ is AFG.

In the remaining possibility, if there exists $n$ such that $x_n\in M(X)\setminus X,$ $(x_n)_{n=1}^N$ is an outer frame for $X$ and, in particular, Proposition~\ref{finite prva napomena}(b) applies.

In both cases the corresponding synthesis operator coincides with $T$.
\end{proof}

\vspace{.1in}

Note that the situation described in Proposition~\ref{finite prva napomena}(b) means that $X,$ although not algebraically generated by finitely many elements, admits finite outer frames. It is not difficult to find examples of such Hilbert $C^*$-modules. In fact, every non-unital $C^*$-algebra $\A$ serves as a simple example of this kind.

To see this, take any non-unital $C^*$-algebra $\A$ and regard it as a Hilbert $\A$-module over itself. Since $\Bbb K(\A)=\A$ is non-unital, $\A$ is not AFG as a Hilbert $C^*$-module, and therefore there are no finite frames for $\A.$
On the other hand, here the multiplier algebra $M(\A)$ plays the role of the multiplier module, so the unit element $e\in M(\A)$ serves as a frame for $M(\A)$ and an outer frame for $\A.$ This is, indeed, obvious from the equality $a=e\la e,a\ra$ that is trivially satisfied for all $a\in \A.$

\vspace{.1in}

Having obtained necessary results on outer frames we are now ready for a detailed study of various questions (such as dual frames, perturbations, tight approximations, etc) that are prominent for the frame theory.
This is the purpose of the second part of the paper. In our study we shall be interested primarily in countably generated Hilbert $C^*$-modules and their frames, but as we shall see, outer frames will naturally appear into the picture.
Hence the results that follow will be concerned with both frames and outer frames.

It should be noted that some of that results are valid even for Hilbert $C^*$-modules that are not countably generated (in the usual sense), but which are countably generated in $M(X)$.

\vspace{0.2in}


\section{Dual frames}

\vspace{0.2in}

Dual frames for Hilbert $C^*$-modules were introduced and discussed in \cite{FL2}, Section 6, where the existence of canonical and alternate dual frames was established, and some of their fundamental properties were proven.

Suppose that $(x_n)_n$ is a frame for a Hilbert $\A$-module $X$ with the analysis operator $U.$ Then we know that the sequence $(y_n)_n= ((U^*U)^{-1}x_n)_n$ is also a frame for $X$ which satisfies
$x=\sum_{n=1}^{\infty}y_n\la x_n,x\ra$ for all $x \in X.$ The frame $(y_n)_n$ is called the canonical dual of $(x_n)_n.$ If we denote its analysis operator by $V$ then the preceding equality can be rewritten in the form $V^*U=I.$ It is now natural to try to describe all frames $(z_n)_n$ for $X$ that are dual to $(x_n)_n$ in the sense that the equality
$x=\sum_{n=1}^{\infty}z_n\la x_n,x\ra$ is satisfied for each $x$ in $X.$ If $W$ denotes the analysis operator of $(z_n)_n,$ this simply means $W^*U=I.$

Hence, the problem of finding dual frames of $\xn$ is closely related to the problem of finding solutions of the equation $TU=I$ with  $T \in \Bbb B(\ell^2(\A),X).$ Obviously, each $T$ such that $TU=I$ is surjective.
When $\A$ is unital, we know by Proposition~\ref{frames vs surjections} that such $T$ is the synthesis operator of some frame for $X,$ and one immediately concludes (see Lemma~\ref{dual relaxed} below) that the obtained frame is dual to $\xn.$

The non-unital case is more complicated because among solutions of $TU=I$ there might be adjointable surjections which are not synthesis operators of frames for $X.$ However, by Theorem~\ref{stvarna korespondencija frameova i sinteza}, such surjections are synthesis operators of outer frames for $X,$ and it will turn out that each outer frame $\yn$ obtained in that way  also satisfies $x=\sum_{n=1}^{\infty}y_n\la x_n,x\ra$ for all $x \in X.$
(Indeed, outer duals do exist; see Examples~\ref{an outer dual} and \ref{an outer dual 1} below.)

Therefore, by solving the equation  $TU=I$ in $\Bbb B(\ell^2(\A),X)$ we shall get synthesis operators of both frames and outer frames for $X$ dual to a given frame.

This suggests a need for a unified treatment of dual frames, without a priori distinguishing between frames and dual frames.

\vspace{0.1in}

Before embarking into our study, let us point out that here we shall restrict ourselves to (outer) frames and (outer) Bessel sequences. That is, we are not going to discuss general sequences that behave like duals to a given frame. Recall that even in a Hilbert space in some situations there are sequences that are not even Bessel, but which are dual to a given frame.

\vspace{.1in}

Throughout this section all our statements are concerned only with infinite frames and outer frames. A short remark about the finite case is included at the end of the section.

Further, if $Y$ is a complementable closed Hilbert $C^*$-submodule of $X$ (i.e.,  $X=Y\oplus Y^{\perp}$), we denote by $P_Y\in \Bbb B(X)$ the orthogonal projection to $Y.$ Recall that a closed Hilbert $C^*$-submodule $Y$ of $X$ is complementable in $X$ if and only if $Y$ is the range of an adjointable operator (see e.g. \cite[Corollary~15.3.9]{W-O}).

Let us start with a definition.

\begin{definition} \label{dual definition}
Let $X$ be a Hilbert $\A$-module and $\xn$ a frame or an outer frame for $X.$ A frame or an outer frame $(y_n)_n$ for $X$ is said
to be a \emph{dual to $\xn$} if
\begin{equation}\label{dual formula a}
\sum_{n=1}^\infty y_n\la x_n,x\ra=x,\quad \forall x\in X.
\end{equation}
\end{definition}

\begin{remark}
Let $(x_n)_n$ and $(y_n)_n$ be as in the above definition; denote by $U$ and $V$ the analysis operators of $(x_n)_n$ and $(y_n)_n,$ respectively. Then, obviously, (\ref{dual formula a}) can be rewritten as
\begin{equation}\label{dual formula short a}
V^*U=I,
\end{equation}
which is equivalent to
\begin{equation}\label{dual formula short}
U^*V=I,
\end{equation}
or
\begin{equation}\label{dual formula}
\sum_{n=1}^\infty x_n\la y_n,x\ra=x,\quad \forall x\in X.
\end{equation}
Hence, as long as we work with frames and outer frames ({\em not mere sequences}), equalities (\ref{dual formula a}) to (\ref{dual formula}) are mutually equivalent,
duality is a symmetric relation, and we can say that $\xn$ and $\yn$ are dual to each other.

Moreover, our first lemma will show, generalizing \cite[Proposition~3.8]{HJLM}, that the same is true for Bessel sequences and outer Bessel sequences.
\end{remark}

\begin{lemma}\label{dual relaxed}
Let $X$ be a Hilbert $\A$-module. If  $\xn¸$ and $\yn$ are Bessel or outer Bessel sequences in $X$ with the analysis operators $U$ and $V,$ respectively, satisfying at least one of equalities (\ref{dual formula a}) to (\ref{dual formula}), then $\xn$ and $\yn$ are frames or outer frames for $X$, they satisfy all equalities (\ref{dual formula a}) to (\ref{dual formula}), and are dual to each other.
\end{lemma}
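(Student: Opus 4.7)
The plan is to push everything through the analysis and synthesis operators and work at the operator level. By Theorem~\ref{Bessel relaxed} (in the Bessel case) and Corollary~\ref{adjointable operators are Bessel sequences 1} (in the outer Bessel case), the hypothesis already guarantees that $U, V \in \Bbb B(X, \ell^2(\A))$ are adjointable, with the synthesis operators acting by the norm-convergent formulae $U^*((a_n)_n) = \sum_n x_n a_n$ and $V^*((a_n)_n) = \sum_n y_n a_n$. Consequently, each of the four equalities (\ref{dual formula a})--(\ref{dual formula}) is nothing but a restatement of one of the two operator identities $V^*U = I$ or $U^*V = I$.

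These two operator identities are trivially equivalent: taking the adjoint of $V^*U = I$ (and using $I^* = I$) yields $U^*V = I$, and conversely. Hence assuming any one of (\ref{dual formula a})--(\ref{dual formula}) secures all four simultaneously, and duality becomes a symmetric relation between $\xn$ and $\yn$.

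The substantive step is to upgrade the (outer) Bessel property of $\xn$ to the (outer) frame property. Assuming $V^*U = I$, I would estimate
$$\|x\| = \|V^*Ux\| \leq \|V\|\,\|Ux\|,\quad \forall x\in X,$$
so that $U$ is bounded below by $1/\|V\|$. Since $\xn$ is already (outer) Bessel with analysis operator $U$, one has $\|Ux\|^2 = \left\|\sum_{n=1}^\infty \la x,x_n\ra \la x_n,x\ra\right\|$, and the lower bound on $\|Ux\|$ is precisely the hypothesis required by Theorem~\ref{frame relaxed} (when all $x_n \in X$) or by its direct outer analogue, obtainable via the strict extension $U_M$ and Lemma~\ref{prosirivanje surjekcija} combined with Theorem~\ref{outer frame relaxed} (when some $x_n \in M(X)\setminus X$). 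Either way, $\xn$ is then a frame or an outer frame for $X$, the classification depending solely on whether each $x_n$ lies in $X$. The same reasoning with the roles of $U$ and $V$ interchanged handles $\yn$.

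The main obstacle is less conceptual than organizational: one must verify that the ``frame vs.\ outer frame'' dichotomy is carried correctly through each step, and that the norm-form frame characterization of Theorem~\ref{frame relaxed} also applies in the outer setting. Both are routine, since the classification depends only on where the sequence members live -- a property unaffected by the analysis-operator argument.
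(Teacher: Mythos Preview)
Your proof is correct and follows the same two-step skeleton as the paper: first use adjointability of $U$ and $V$ (from Theorem~\ref{Bessel relaxed} and Corollary~\ref{adjointable operators are Bessel sequences 1}) to see that (\ref{dual formula a})--(\ref{dual formula}) are all equivalent to the single operator identity $V^*U=I\Leftrightarrow U^*V=I$, and then upgrade each (outer) Bessel sequence to an (outer) frame.

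The only difference is in how the upgrade is carried out. You use $V^*U=I$ to conclude that $U$ is bounded below, and then invoke Theorem~\ref{frame relaxed} in the inner case and a somewhat roundabout route through $U_M$, Lemma~\ref{prosirivanje surjekcija} and Theorem~\ref{outer frame relaxed} in the outer case. The paper instead reads $V^*U=I$ as saying that $V^*$ is a surjection and appeals directly to Theorem~\ref{stvarna korespondencija frameova i sinteza}, which handles the frame/outer frame dichotomy in one stroke (and similarly $U^*V=I$ for $(x_n)_n$). Your route is perfectly valid---and your case analysis does go through---but the paper's is shorter precisely because Theorem~\ref{stvarna korespondencija frameova i sinteza} was designed to absorb that case split.
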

\begin{proof}
First note, by Theorem~\ref{Bessel relaxed} and Corollaries~\ref{adjointable operators are Bessel sequences 1} and \ref{adjointable operators are Bessel sequences 2}, that  Bessel sequences and outer Bessel sequences correspond to adjointable operators from $X$ to $\ell^2(\A),$ so that not only all four above equalities make sense, but also each of them implies the remaining three.

So, suppose that equalities (\ref{dual formula a}) to (\ref{dual formula}) hold. From (\ref{dual formula short a}) we get that $V^*$ is a surjection. By Theorem~\ref{stvarna korespondencija frameova i sinteza}, $(y_n)_n$ is a frame or an outer frame for $X.$
By invoking (\ref{dual formula short}), the same argument applies to $(x_n)_n.$
\end{proof}

\vspace{.1in}

If $X$ is a strictly complete Hilbert $C^*$-module (i.e.,  if $M(X)=X$) our discussion on duality reduces to frames since then there are no outer frames. When $M(X)\not =X,$ the situation is more complicated. If $(x_n)_n$ is a frame for $X,$ its canonical dual is also a frame. On the other hand, if $(x_n)_n$ is an outer frame for $X,$ its canonical dual is also outer. Both statements follow from the fact that the $(U^*U)^{-1}$ acts bijectively on $X.$
In general, a frame $(x_n)_n$ for $X$ may have outer dual frames. Indeed, in two examples that follow we demonstrate that:
\begin{itemize}
\item each countably generated Hilbert $C^*$-module $X$ such that $M(X)\not =X$ has a frame which possesses an outer dual frame,
\item there exists a frame for a countably generated Hilbert $C^*$-module $X$ possessing an outer dual frame whose all elements are in $M(X)\setminus X.$
\end{itemize}

\begin{example}\label{an outer dual}
Let $X$ be a countably generated Hilbert $\textsf{A}$-module that is not strictly complete, i.e.,  $X \not = M(X).$ Take any Parseval frame $(x_n)_{n=1}^{\infty}$ for $X.$ Let $x_0\in X$ and $y_0\in M(X)\setminus X$ be such that $\theta_{y_0,x_0}=0$ (for example, we can take $x_0=0$ and arbitrary $y_0\in M(X)\setminus X$). Let $y_n=x_n$ for $n\in \Bbb N.$

Then $(x_n)_{n=0}^{\infty}$ is a frame for $X,$ $(y_n)_{n=0}^{\infty}$ is an outer frame for $X,$ and they are dual to each other since
$$\sum_{n=0}^{\infty}y_n\la x_n,x\ra=y_0\la x_0,x\ra+  \sum_{n=1}^{\infty}x_n\la x_n,x\ra =x,\quad \forall x \in X.$$

\end{example}

\vspace{.1in}

\begin{example}\label{an outer dual 1}
Let $(\epsilon_n)_n$ be an orthonormal basis of a separable Hilbert space $H.$ Denote by $(\cdot | \cdot )$ the inner product in $H.$
Consider $\Bbb K=\Bbb K(H)$ as a Hilbert $\Bbb K$-module in the standard way.

For $i,j\in \Bbb N$ let $e_{i,j}\in\Bbb B(H)$ be the $1$-dimensional partial isometry defined by $e_{i,j}(\xi)=(\xi|\epsilon_j)\epsilon_i,\,\xi \in H.$ In particular, for each $n\in\Bbb N,$  $e_{n,n}$ is the orthogonal projection to $\text{span}\{\epsilon_n\}.$ One easily verifies that $e_{i,j}e_{k,l}=\delta_{j,k}e_{i,l}$ and $e_{i,j}^*=e_{j,i}$ for all $i,j,k,l\in \Bbb N.$

Since $(\sum_{n=1}^N\theta_{e_{n,1},e_{n,1}})_N=(\sum_{n=1}^N e_{n,1}e_{n,1}^*)_N=(\sum_{n=1}^Ne_{n,n})_N$ is an approximate unit for $\Bbb K,$  Proposition~\ref{parseval frames are approximate units} implies that $(e_{n,1})_n$ is a Parseval frame for $\Bbb K.$

Let $(H_n)_n$ be a sequence of closed infinite dimensional subspaces of $H$ such that $\bigoplus_{n=1}^{\infty}H_n=H.$ For each $n\in \Bbb N$ consider a partial isometry $t_n\in \Bbb B(H)$ such that $\N(t_n)=
\textup{span}\{\epsilon_1\}$ and $\R (t_n)=H_n.$ Thus, $t_nt_n^*$ is the orthogonal projection to $H_n$ for all $n \in \Bbb N.$ By construction, $t_ne_{1,n}=0$ for all $n \in \Bbb N.$
As in Example~\ref{fail 2} one verifies that the series $\sum_{n=1}^{\infty}t_nt_n^*a$ converges in norm to $a$, for each $a$ in $\Bbb K.$

Let $y_n=e_{n,1}+t_n,\,n\in \Bbb N.$ Then for all $a \in \Bbb K$ we have
$$
\sum_{n=1}^{\infty}y_n\la y_n,a\ra=\sum_{n=1}^{\infty}(e_{n,1}+t_n)(e_{1,n}+t_n^*)a=\sum_{n=1}^{\infty}e_{n,n}a + \sum_{n=1}^{\infty}t_nt_n^*a=2a,
$$
and since $y_n\not \in \Bbb K$ for every $n\in \Bbb N,$ we conclude that $(y_n)_n$ is an outer 2-tight frame for $\Bbb K.$

Finally, $(y_n)_n$ and $(e_{n,1})_n$ are dual to each other, since for all $a\in \Bbb K$
$$
\sum_{n=1}^{\infty}y_n\la e_{n,1},a\ra=\sum_{n=1}^{\infty}(e_{n,1}+t_n) e_{1,n}a=\sum_{n=1}^{\infty}e_{n,n}a=a.
$$
\end{example}

\vspace{.1in}

We now state our first result which describes all frames and outer frames that are dual to a given one.

\begin{theorem}\label{general dual}
Let $(x_n)_n$ be a frame or an outer frame for a Hilbert $\A$-module $X$ with the analysis operator $U.$
An operator $V\in \Bbb B(X,\ell^2(\A))$ is the analysis operator of a frame or an outer frame dual to $(x_n)_n$ if and only if $V$ is of the form
\begin{equation}\label{dual analysis}
V=U(U^*U)^{-1}+ \left( I-U(U^*U)^{-1}U^*\right)L
\end{equation}
for some $L\in \Bbb B(X,\ell^2(\textsf{A})).$
\end{theorem}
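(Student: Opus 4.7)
The plan is to recognize that, by Corollary~\ref{adjointable operators are Bessel sequences 2}, each $V\in \Bbb B(X,\ell^2(\A))$ arises as the analysis operator of a unique Bessel or outer Bessel sequence in $X$, and by Lemma~\ref{dual relaxed} such a sequence is a frame or outer frame dual to $(x_n)_n$ precisely when $V^*U=I$. Hence the theorem reduces to describing the affine set of all $V\in\Bbb B(X,\ell^2(\A))$ satisfying $V^*U=I$. Note that this condition automatically forces $V^*$ to be surjective, since every $x\in X$ satisfies $x=V^*(Ux)$, so there is no separate surjectivity hypothesis to impose.

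Next I would exhibit a particular solution. Because $(x_n)_n$ is a frame or outer frame, Proposition~\ref{analysis op outer f} (together with the standard introduction material for frames) guarantees that $U$ is bounded below, hence $U^*U\in\Bbb B(X)$ is invertible. The operator $V_0:=U(U^*U)^{-1}$ then satisfies $U^*V_0=(U^*U)(U^*U)^{-1}=I$, which corresponds to the canonical dual. Writing an arbitrary solution as $V=V_0+W$, the remaining task is to parametrize the linear space
\[
\{W\in\Bbb B(X,\ell^2(\A)):U^*W=0\}=\{W\in\Bbb B(X,\ell^2(\A)):\R(W)\subseteq\N(U^*)\}.
\]

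For this I would invoke the orthogonal decomposition $\ell^2(\A)=\R(U)\oplus\N(U^*)$, which holds by \cite[Theorem~15.3.8 and Corollary~15.3.9]{W-O} since $U$ bounded below makes $\R(U)$ closed and hence complementable. A direct computation shows that $P:=U(U^*U)^{-1}U^*$ is self-adjoint, idempotent, and fixes $\R(U)$ pointwise, so $P$ is the orthogonal projection onto $\R(U)$ and $Q:=I-U(U^*U)^{-1}U^*$ is the projection onto $\N(U^*)$. An operator $W\in\Bbb B(X,\ell^2(\A))$ has range in $\N(U^*)$ iff $QW=W$, which is the case precisely when $W=QL$ for some $L\in\Bbb B(X,\ell^2(\A))$: one direction is immediate, and for the other take $L=W$. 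Combining these pieces yields the formula $V=U(U^*U)^{-1}+(I-U(U^*U)^{-1}U^*)L$.

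The only slightly delicate point I anticipate is the verification that $P=U(U^*U)^{-1}U^*$ is genuinely the orthogonal projection onto $\R(U)$, but once $U^*U$ is known to be invertible in $\Bbb B(X)$ this is a one-line check. Apart from that, the argument is a clean Hilbert $C^*$-module analogue of the classical least-squares parametrization of solutions of $U^*V=I$, and no conceptual obstacle arises; the whole flexibility of the non-unital case is already absorbed into Lemma~\ref{dual relaxed}, which guarantees that every $V$ with $V^*U=I$ automatically comes from a frame or an outer frame dual to $(x_n)_n$.
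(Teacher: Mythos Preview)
Your proposal is correct and follows essentially the same approach as the paper: both reduce the problem via Lemma~\ref{dual relaxed} to characterizing all $V$ with $U^*V=I$, and both verify the parametrization by taking $L=V$ in the forward direction (your decomposition $V=V_0+W$ with $L=W$ amounts to the same thing) and checking $U^*V=I$ directly in the converse. The paper compresses the argument to two sentences, while your discussion of the projection $Q=I-U(U^*U)^{-1}U^*$ onto $\N(U^*)$ is the content of the paper's Remark~\ref{pseudoinverz operatora analize} immediately following the theorem.
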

\begin{proof}
If $\yn$ is a frame or an outer frame dual to $\xn,$ i.e., if $U^*V=I,$ then (\ref{dual analysis}) is fulfilled  if we choose $L=V.$

Conversely, if $V$ is as in (\ref{dual analysis}), then a straightforward verification shows that $U^*V=I$ and Lemma~\ref{dual relaxed} applies.
\end{proof}

\begin{remark}\label{pseudoinverz operatora analize}
Suppose we are given a frame or an outer frame $(x_n)_n$ for a Hilbert $C^*$-module $X.$ Denote the corresponding analysis operator by $U.$

Let $P= I-U(U^*U)^{-1}U^*\in\Bbb B(\ell^2(\A)).$ It is easy to verify that $P=P^*=P^2,$ and $P((a_n)_n)=(a_n)_n$ if and only if $(a_n)_n\in \N(U^*).$ Since $U$ has a closed range, $\R(U)$ is complementable in $\ell^2(\A)$ and $\N(U^*)=\R(U)^\perp.$ Therefore,
\begin{equation}\label{second term duals}
I-U(U^*U)^{-1}U^*=P_{\R(U)^\perp}.
\end{equation}

Observe that each $V$ as in  (\ref{dual analysis}) consists of two terms. The first one is just the analysis operator of the canonical dual of $(x_n)_n.$ The second term comes from an arbitrary adjointable operator $L: X \rightarrow \ell^2(\textsf{A})$ compressed to the submodule $\textup{R}(U)^{\perp}=\textup{N}(U^*)$ which is a part of $\ell^2(\textsf{A})$ of no relevance as far as the right inverse of $U^*$ is concerned.
\end{remark}

\vspace{.1in}

\begin{cor} \label{kao 6.5 kod FL}
Let $(x_n)_n$ be a frame or an outer frame for a Hilbert $\A$-module $X$ with the analysis operator $U.$
If $\yn$ is a frame or an outer frame dual to $(x_n)_n$ then
\begin{equation}\label{minimalni dualni}
\sum_{n=1}^\infty\la x,(U^*U)^{-1}x_n\ra\la (U^*U)^{-1}x_n,x\ra\leq\sum_{n=1}^\infty\la x,y_n\ra\la y_n,x\ra,\quad \forall x\in X.
\end{equation}
\end{cor}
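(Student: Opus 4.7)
The plan is to reformulate both sides of \eqref{minimalni dualni} as $\A$-valued inner products in $\ell^2(\A)$ and then exploit the orthogonal decomposition provided by Theorem~\ref{general dual}. Namely, if $V_0\in\Bbb B(X,\ell^2(\A))$ denotes the analysis operator of the canonical dual $((U^*U)^{-1}x_n)_n,$ and if $V$ denotes the analysis operator of $\yn,$ then a direct unpacking of the inner product in $\ell^2(\A)$ gives
\begin{equation*}
\la V_0x,V_0x\ra=\sum_{n=1}^\infty\la x,(U^*U)^{-1}x_n\ra\la(U^*U)^{-1}x_n,x\ra,\qquad \la Vx,Vx\ra=\sum_{n=1}^\infty\la x,y_n\ra\la y_n,x\ra,
\end{equation*}
so that \eqref{minimalni dualni} is equivalent to $\la V_0x,V_0x\ra\le\la Vx,Vx\ra$ for every $x\in X.$

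Next I would compute $V_0$ explicitly: since the canonical dual satisfies $V_0^*e^{(n)}=(U^*U)^{-1}x_n=(U^*U)^{-1}U^*e^{(n)},$ we have $V_0=U(U^*U)^{-1}.$ By Theorem~\ref{general dual} (and Remark~\ref{pseudoinverz operatora analize}) any dual analysis operator has the shape $V=V_0+PL$ where $P=I-U(U^*U)^{-1}U^*=P_{\R(U)^\perp}$ and $L\in\Bbb B(X,\ell^2(\A)).$ The key identity is the orthogonality $V_0^*P=0,$ which follows from
\begin{equation*}
V_0^*P=(U^*U)^{-1}U^*\bigl(I-U(U^*U)^{-1}U^*\bigr)=(U^*U)^{-1}U^*-(U^*U)^{-1}U^*=0.
\end{equation*}

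Substituting $V=V_0+PL$ and expanding, the cross terms $\la V_0x,PLx\ra=\la x,V_0^*PLx\ra$ and its adjoint vanish by the identity just established, hence
\begin{equation*}
\la Vx,Vx\ra=\la V_0x,V_0x\ra+\la PLx,PLx\ra\ge\la V_0x,V_0x\ra
\end{equation*}
because $\la PLx,PLx\ra\ge 0$ in $\A.$ Translating back through the two displayed identities yields \eqref{minimalni dualni}. There is no real obstacle here; the only point requiring care is the verification of the orthogonality $V_0^*P=0,$ which is the algebraic essence of why the canonical dual is minimal.
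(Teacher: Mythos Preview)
Your proof is correct and follows essentially the same route as the paper's: both invoke Theorem~\ref{general dual} to write $V=U(U^*U)^{-1}+P_{\R(U)^\perp}L$, then expand $V^*V$ (equivalently $\la Vx,Vx\ra$) and use the vanishing of the cross terms---the paper states this as $V^*V=(U^*U)^{-1}+L^*P_{\R(U)^\perp}L$, which is exactly your orthogonality $V_0^*P=0$ in disguise. One small remark: your justification of $V_0=U(U^*U)^{-1}$ via $V_0^*e^{(n)}$ tacitly assumes $\A$ is unital (or works in the multiplier module); it is cleaner, and covers the outer case uniformly, to observe directly that $V_0x=(\la (U^*U)^{-1}x_n,x\ra)_n=(\la x_n,(U^*U)^{-1}x\ra)_n=U(U^*U)^{-1}x$.
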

\begin{proof} Let $V$ be the analysis operator for $\yn.$
For every $x\in X$ the left hand side of \eqref{minimalni dualni} is equal to
\begin{eqnarray*}
  \sum_{n=1}^\infty\la x,(U^*U)^{-1}x_n\ra\la (U^*U)^{-1}x_n,x\ra&=&\sum_{n=1}^\infty\la (U^*U)^{-1}x,x_n\ra\la x_n,(U^*U)^{-1}x\ra\\
&=&\la U(U^*U)^{-1}x,U(U^*U)^{-1}x\ra\\
&=&\la (U^*U)^{-1}x,x\ra,
\end{eqnarray*}
while the right hand side is $\la Vx,Vx\ra=\la V^*Vx,x\ra.$  Therefore, \eqref{minimalni dualni} reads as
\begin{equation}\label{minimalni dualni short}
(U^*U)^{-1}\leq V^*V.
\end{equation}
By Theorem~\ref{general dual} and Remark~\ref{pseudoinverz operatora analize}, there is $L\in \Bbb B(X,\ell^2(\textsf{A}))$ such that $V=U(U^*U)^{-1}+ P_{\R(U)^\perp}L.$ Then a straightforward calculation shows that  $V^*V=(U^*U)^{-1}+L^*P_{\R(U)^\perp}L,$ which obviously implies $V^*V\ge (U^*U)^{-1}$.
 \end{proof}

\vspace{.1in}

We note that the above corollary sharpens Proposition 6.5 from \cite{FL2}. It shows that the frame coefficients of the canonical dual retain the minimality property even when outer frames are included into consideration.

Theorem~\ref{general dual} enables us also to describe those frames and outer frames  that possess Parseval duals.

\begin{cor}\label{kad postoji Pdual}
Let $(x_n)_n$ be a frame or an outer frame for a Hilbert $\A$-module $X$ with the analysis operator $U.$
Then $(x_n)_n$ admits a Parseval dual or an outer Parseval dual $(y_n)_n$ if and only if there is $T\in \Bbb B(X,\ell^2(\textsf{A}))$ such that
$U^*U-I=T^*P_{\textup{R}(U)^{\perp}}T.$
\end{cor}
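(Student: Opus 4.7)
The plan is to leverage the parametrization of all duals provided by Theorem~\ref{general dual}: combined with identity~(\ref{second term duals}), the analysis operator of any frame or outer frame dual to $(x_n)_n$ takes the form $V = U(U^*U)^{-1} + P_{\R(U)^\perp} L$ for some $L \in \Bbb B(X, \ell^2(\A)).$ Using that $U^* P_{\R(U)^\perp} = 0,$ a short expansion (already carried out in the proof of Corollary~\ref{kao 6.5 kod FL}) gives
\[V^*V \;=\; (U^*U)^{-1} + L^* P_{\R(U)^\perp} L.\]

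Next, I would observe that the dual encoded by $V$ is Parseval or outer Parseval precisely when its analysis operator is an isometry, i.e.\ $V^*V = I.$ Consequently, the existence of a Parseval or outer Parseval dual of $(x_n)_n$ is equivalent to the solvability, in $L \in \Bbb B(X, \ell^2(\A)),$ of the equation
\[L^* P_{\R(U)^\perp} L \;=\; I - (U^*U)^{-1}.\]

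The main point, and essentially the only one that requires genuine attention, is to show that this condition is equivalent to the one stated in the corollary, namely $U^*U - I = T^* P_{\R(U)^\perp} T$ for some $T \in \Bbb B(X, \ell^2(\A)).$ The substitution I would use is $T := L (U^*U)^{1/2},$ which is a bijection of $\Bbb B(X, \ell^2(\A))$ with inverse $T \mapsto T (U^*U)^{-1/2},$ since $(U^*U)^{1/2}$ is a positive adjointable bijection on $X.$ Sandwiching the $L$-equation by $(U^*U)^{1/2}$ on both sides and using the routine identity $(U^*U)^{1/2}(I - (U^*U)^{-1})(U^*U)^{1/2} = U^*U - I$ transforms it into $T^* P_{\R(U)^\perp} T = U^*U - I;$ sandwiching by $(U^*U)^{-1/2}$ reverses the passage. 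The delicate point is that the conjugation is performed outside of $P_{\R(U)^\perp}$ (not through it), and one must verify that no information is lost on either side.

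For the converse it remains to reconstruct an actual dual from a given solution: starting with $T$ satisfying $U^*U - I = T^* P_{\R(U)^\perp} T,$ I would set $L := T(U^*U)^{-1/2}$ and $V := U(U^*U)^{-1} + P_{\R(U)^\perp} L.$ Then $V^*V = I$ by the computation above, so $V^*$ is a surjection in $\Bbb B(\ell^2(\A), X);$ Theorem~\ref{stvarna korespondencija frameova i sinteza} then produces a unique frame or outer frame $(y_n)_n$ for $X$ whose analysis operator is $V,$ and the identity $U^*V = U^*U(U^*U)^{-1} + U^* P_{\R(U)^\perp} L = I$ places $(y_n)_n$ among the duals of $(x_n)_n,$ Parseval or outer Parseval by the isometry of $V.$
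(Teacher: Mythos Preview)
Your proposal is correct and follows essentially the same approach as the paper: the same parametrization $V=U(U^*U)^{-1}+P_{\R(U)^\perp}L$ from Theorem~\ref{general dual}, the same computation $V^*V=(U^*U)^{-1}+L^*P_{\R(U)^\perp}L$, and the same substitution $T=L(U^*U)^{1/2}$ to pass between the two equations. The only cosmetic difference is that in the converse you appeal to Theorem~\ref{stvarna korespondencija frameova i sinteza} and then verify $U^*V=I$ separately, whereas the paper simply quotes Theorem~\ref{general dual} (which already guarantees both that $V$ is an analysis operator and that it yields a dual).
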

\begin{proof}
Suppose $(x_n)_n$ admits a Parseval dual frame or outer frame $(y_n)_n.$ If $V$ is the analysis operator of $(y_n)_n$ then, by Theorem~\ref{general dual}, there exists $L\in \Bbb B(X,\ell^2(\A))$ such that $V=U(U^*U)^{-1}+ P_{\textup{R}(U)^{\perp}}L.$
Then $I=V^*V=(U^*U)^{-1}+L^*P_{\textup{R}(U)^{\perp}}L,$
so, denoting $T= L(U^*U)^{\frac{1}{2}},$ we get
\begin{eqnarray*}
U^*U-I&=&(U^*U)^\frac{1}{2}(I-(U^*U)^{-1})(U^*U)^\frac{1}{2}\\
&=&(U^*U)^\frac{1}{2}L^*P_{\textup{R}(U)^{\perp}}L(U^*U)^\frac{1}{2}\\
&=&T^*P_{\textup{R}(U)^{\perp}}T.
\end{eqnarray*}

Conversely, suppose $U^*U-I=T^*P_{\textup{R}(U)^{\perp}}T$ for some  $T\in \Bbb B(X,\ell^2(\textsf{A})).$
Let $V=U(U^*U)^{-1}+ P_{\textup{R}(U)^{\perp}}T(U^*U)^{-\frac{1}{2}}.$ By Theorem~\ref{general dual}, $V$ is the analysis operator of some frame or outer frame $(y_n)_n$ for $X$ which is dual to $(x_n)_n.$ Since
\begin{eqnarray*}
  V^*V&=& (U^*U)^{-1}+(U^*U)^{-\frac{1}{2}}T^*P_{\textup{R}(U)^{\perp}}T(U^*U)^{-\frac{1}{2}}\\
&=&(U^*U)^{-\frac{1}{2}}(I+T^*P_{\textup{R}(U)^{\perp}}T)(U^*U)^{-\frac{1}{2}}\\
&=&(U^*U)^{-\frac{1}{2}}U^*U(U^*U)^{-\frac{1}{2}}=I,
\end{eqnarray*}
$(y_n)_n$ is a Parseval frame or an outer Parseval frame for $X.$
\end{proof}
\vspace{.1in}


\begin{remark}
Hilbert space frames that possess Parseval duals are described in \cite{H} and \cite{ACRS}, see also \cite{BB1}. It turns out that a frame $(x_n)_n$ for a Hilbert space possesses a Parseval dual if and only if $A\geq 1$ and $\text{dim}\,\textup{R}(U^*U-I)\leq \text{dim}\, \textup{R}(U)^{\perp}.$ (Here, as usual, $A$ denotes a lower frame bound and $U$ is the analysis operator.) Note that the later condition means that $\textup{R}(U^*U-I)$ can be isometrically embedded into $\textup{R}(U)^{\perp}.$ We note that these conditions are implied by Corollary~\ref{kad postoji Pdual}.

Indeed, if $U^*U-I=T^*P_{\textup{R}(U)^{\perp}}T$ then, obviously, $U^*U-I\geq 0$ which implies $A\geq 1.$ On the other hand, the equality
$U^*U-I=T^*P_{\textup{R}(U)^{\perp}}T$ can be rewritten as
$(U^*U-I)^{\frac{1}{2}}(U^*U-I)^{\frac{1}{2}}=T^*P_{\textup{R}(U)^{\perp}}T$ which gives us
$$
\left\langle (U^*U-I)^{\frac{1}{2}}x,(U^*U-I)^{\frac{1}{2}}x\right\rangle = \left\langle P_{\textup{R}(U)^{\perp}}Tx,P_{\textup{R}(U)^{\perp}}Tx\right\rangle,\quad\forall x\in X.
$$
Using this equality, we can define a map $\phi : \textup{R}((U^*U-I)^{\frac{1}{2}}) \rightarrow \textup{R}(U)^{\perp}$ by $\phi((U^*U-I)^{\frac{1}{2}}x)=
P_{\textup{R}(U)^{\perp}}Tx.$ Clearly, $\phi$ is a well-defined isometry. Since, obviously, $\textup{R}(U^*U-I)\subseteq \textup{R}((U^*U-I)^{\frac{1}{2}}),$ we conclude that the above map $\phi$ provides an isometrical embedding of $\textup{R}(U^*U-I)$ into $\textup{R}(U)^{\perp}.$
\end{remark}


\vspace{.1in}

Next we provide another description of all frames and outer frames that are dual to a given one. We shall use \cite{BB1} as a blueprint.

\begin{prop}\label{left inverse 1}
Let $X$ and $Y$ be Hilbert $\textsf{A}$-modules. Let $U\in \Bbb B(X,Y)$ and $T\in \Bbb B(Y,X)$ be such that $TU=I.$ Then
\begin{enumerate}
\item[(a)] $\N(T)=\textup{R}(I-UT)=(I-UT)(\N(U^*))$
\item[(b)] $Y=\textup{R}(U) \dotplus \N(T)$ (a direct sum),
\item[(c)] $UT \in \Bbb B(Y)$ is the oblique projection to $\textup{R}(U)$ along $\N(T).$
\end{enumerate}
\end{prop}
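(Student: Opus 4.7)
The plan is to identify the operator $E := UT \in \Bbb B(Y)$ as the oblique projection demanded by (c), and then to read off (a) and (b) as easy corollaries of the fact that $E$ is idempotent.

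The algebraic starting point is $E^2 = UTUT = U(TU)T = UT = E$, so $E$ is an idempotent in $\Bbb B(Y).$ A second ingredient is that $TU = I$ forces $U$ to be injective and bounded below, via $\|x\| = \|TUx\| \le \|T\|\,\|Ux\|$; hence $\R(U)$ is closed, and by \cite[Corollary~15.3.9]{W-O} one has the orthogonal decomposition $Y = \R(U) \oplus \N(U^*).$

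For (c), I will verify that $\R(E) = \R(U)$ and $\N(E) = \N(T).$ The inclusion $\R(E) \subseteq \R(U)$ is immediate, and conversely $Ux = UTUx = E(Ux)$ shows $\R(U) \subseteq \R(E).$ Since $U$ is injective, $UTy = 0$ if and only if $Ty = 0,$ so $\N(E) = \N(T).$ Any idempotent $E$ in $\Bbb B(Y)$ yields a direct-sum decomposition $Y = \R(E) \dotplus \N(E),$ with $E$ acting as the projection onto $\R(E)$ along $\N(E);$ this gives both (b) and (c) at once.

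For (a), the equality $\N(T) = \R(I - UT)$ is the general identity $\N(E) = \R(I - E)$ for an idempotent $E,$ and can be checked directly: if $z \in \N(T)$ then $(I - UT)z = z,$ so $z \in \R(I - UT);$ conversely, if $z = (I - UT)w$ then $Tz = Tw - TUTw = 0.$ For the remaining equality $\N(T) = (I - UT)(\N(U^*)),$ I will use the decomposition $Y = \R(U) \oplus \N(U^*)$: given $z \in \N(T),$ write $z = Ux + w$ with $w \in \N(U^*);$ applying $T$ and using $TU = I$ gives $x = -Tw,$ whence $z = -UTw + w = (I - UT)w \in (I - UT)(\N(U^*)).$ The reverse containment follows at once from the first equality of (a).

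Given how cleanly the idempotent $E = UT$ packages the entire statement, I anticipate no real obstacle; the only point that requires some care is invoking the closedness of $\R(U)$ and the orthogonal complementation $Y = \R(U) \oplus \N(U^*)$ for the last equality in (a).
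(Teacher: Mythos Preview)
Your proof is correct and follows essentially the same approach as the paper's: both rely on $TU=I$ making $U$ bounded below (hence $Y=\R(U)\oplus\N(U^*)$) and on the idempotence of $UT$, with the same decomposition argument for the nontrivial inclusion in (a). Your presentation is slightly more streamlined in that you name $E=UT$ as an idempotent up front and read (b) and (c) off the general identity $Y=\R(E)\dotplus\N(E)$, whereas the paper proves (a), (b), (c) in order by direct computation; the underlying arguments are identical.
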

\begin{proof}
(a) From $T(I-UT)=0$ we have $\textup{R}(I-UT)\subseteq \N(T).$ Conversely, $y\in \N(T) \Rightarrow (I-UT)y=y  \Rightarrow y \in \textup{R}(I-UT).$ This gives the first equality.

To prove $\textup{R}(I-UT)\subseteq (I-UT)(\N(U^*))$ (the opposite inclusion is obvious), first observe that our assumption $TU=I$ implies that $U$ is bounded from below. Hence $\textup{R}(U)$ is a closed submodule of $Y$ and $Y=\textup{R}(U)\oplus \N(U^*).$
Let us now take arbitrary $(I-UT)y \in \textup{R}(I-UT),$ $y\in Y.$ Then $y=Ux+z$ for some $x\in X$ and $z\in \N(U^*),$ so we have $$(I-UT)y=(I-UT)Ux+(I-UT)z=(I-UT)z\in (I-UT)(\N(U^*)).$$

(b) Let $y\in \textup{R}(U) \cap \N(T).$ Then $y=Ux$ for some $x\in X$ and $Ty=0.$ Putting this together we get $TUx=0$; thus, by assumption, $x=0.$ Hence, $y=0$ and this shows that the intersection $\textup{R}(U) \cap \N(T)$ is trivial.

Let us now take arbitrary $y\in Y$ and write it, as in the preceding paragraph, in the form $y=Ux+z$ with $x\in X$ and $z\in \N(U^*).$ Then we have (again as before) $(I-UT)y=(I-UT)z.$
This can be rewritten as
\begin{equation}\label{direct sum}
y=UTy+(I-UT)z.
\end{equation}
Since $UTy\in\textup{R}(U)$ and $(I-UT)z\in (I-UT)(\N(U^*))\stackrel{\textup{(a)}}{=}\N(T),$ the proof is completed.

(c) Evidently, $UT\in \Bbb B(Y)$ satisfies $UTUx=Ux$ for all $Ux\in \textup{R}(U),$ and $UTy=0$ for all $y\in \N(T).$
\end{proof}

\begin{prop}\label{dual frames again}
Let $(x_n)_n$ and $(y_n)_n$ be frames or outer frames for a Hilbert $\textsf{A}$-module $X$ that are dual to each other. Denote by $U$ and $V$ the corresponding analysis operators. Then
\begin{enumerate}
\item[(a)] $\ell^2(\textsf{A})=\textup{R}(U) \dotplus \N(V^*),$
\item[(b)] $UV^*$ is the oblique projection to $\textup{R}(U)$ along $\N(V^*),$
\item[(c)] $\ell^2(\textsf{A})=\textup{R}(V) \dotplus \N(U^*),$
\item[(d)] $VU^*$ is the oblique projection to $\textup{R}(V)$ along $\N(U^*).$
\end{enumerate}
\end{prop}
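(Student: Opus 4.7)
The plan is to recognize that Proposition~\ref{dual frames again} is an immediate corollary of Proposition~\ref{left inverse 1}, applied twice with the roles of $U$ and $V$ interchanged. The key observation is that, by Lemma~\ref{dual relaxed} and the equivalence of \eqref{dual formula short a} and \eqref{dual formula short} noted just before it, the duality of $(x_n)_n$ and $(y_n)_n$ yields \emph{both} composition identities
\[
V^*U = I \quad \text{and} \quad U^*V = I,
\]
so the hypothesis of Proposition~\ref{left inverse 1} is available in two symmetric ways.

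For parts (a) and (b), I would apply Proposition~\ref{left inverse 1} with the Hilbert $\A$-modules $X$ and $Y = \ell^2(\A)$, the operator $U \in \Bbb B(X, \ell^2(\A))$ in its own role, and $T := V^* \in \Bbb B(\ell^2(\A), X)$; the identity $TU = V^*U = I$ is exactly the hypothesis of that proposition. Parts (b) and (c) of Proposition~\ref{left inverse 1} then give, respectively, the direct sum decomposition $\ell^2(\A) = \R(U) \dotplus \N(V^*)$ and the statement that $UT = UV^*$ is the oblique projection onto $\R(U)$ along $\N(V^*)$, which are precisely (a) and (b).

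For parts (c) and (d), I would swap the roles of $U$ and $V$: apply Proposition~\ref{left inverse 1} with $V \in \Bbb B(X, \ell^2(\A))$ playing the role of the first operator and $T := U^* \in \Bbb B(\ell^2(\A), X)$ as its left inverse; this is legitimate because $U^*V = I$. The same two conclusions now read $\ell^2(\A) = \R(V) \dotplus \N(U^*)$ and $VU^*$ is the oblique projection onto $\R(V)$ along $\N(U^*)$.

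There is no real obstacle here; the only subtlety worth noting is the symmetric nature of the duality relation between (outer) frames, established in Lemma~\ref{dual relaxed}, which is what allows us to invoke Proposition~\ref{left inverse 1} in both directions and so extract all four statements from a single technical lemma.
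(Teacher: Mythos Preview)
Your proposal is correct and follows essentially the same approach as the paper's own proof: the paper simply states that (a) and (b) follow from Proposition~\ref{left inverse 1} together with $V^*U=I$, while (c) and (d) are obtained the same way using $U^*V=I$. Your version merely spells out the identifications $T=V^*$ and $T=U^*$ a bit more explicitly, but the argument is the same.
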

\begin{proof}
(a) and (b) follow from the preceding proposition and the equality $V^*U=I,$ while (c) and (d) are obtained in the same way using the equality $U^*V=I.$
\end{proof}

\vspace{.1in}

\begin{remark}\label{the only orthogonal projection}
Consider a frame or an outer frame $(x_n)_n$ for a Hilbert $\textsf{A}$-module $X$ with the analysis operator $U.$ Let $(y_n)_n$ be a frame or an outer frame dual to $(x_n)_n$; let $V$ denotes the corresponding analysis operator. Then, by the preceding proposition, $UV^*$ is an obligue projection to $\textup{R}(U).$
In the special case when $(y_n)_n$ is the canonical dual of $(x_n)_n$ we have $V=U(U^*U)^{-1}$ and $UV^*=U(U^*U)^{-1}U^*$ which is by Remark~\ref{pseudoinverz operatora analize} the {\em orthogonal} projection to $\textup{R}(U).$
So, in the light of the preceding proposition, this orthogonality is the exclusive property of the canonical dual among all frames and outer frames that are dual to $(x_n)_n.$
\end{remark}

\vspace{.1in}

The following theorem is a result similar to Theorem~\ref{general dual}. It provides another characterization of analysis operators of dual frames and outer frames.

\begin{theorem}\label{general dual 1}
Let $(x_n)_n$ be a frame or an outer frame for Hilbert $\textsf{A}$-module $X$ with the analysis operator $U.$
An operator $V\in \Bbb B(X,\ell^2(\A))$ is the analysis operator of a frame or an outer frame dual to $(x_n)_n$ if and only if $V$ is of the form
\begin{equation}\label{dual analysis 1}
V=F^*U(U^*U)^{-1},
\end{equation}
where $F\in \Bbb B(\ell^2(\textsf{A}))$ is an oblique projection to $\textup{R}(U)$ along some closed direct complement of $\textup{R}(U)$ in $\ell^2(\textsf{A}).$
\end{theorem}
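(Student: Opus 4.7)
The plan is to prove the equivalence by moving back and forth between the analysis operator of a candidate dual and an adjointable idempotent on $\ell^2(\A)$ with range $\textup{R}(U)$.

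\smallskip

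For the ``only if'' direction, suppose $(y_n)_n$ is a frame or outer frame dual to $(x_n)_n$ with analysis operator $V,$ so that $V^*U=U^*V=I$ (Lemma~\ref{dual relaxed}). Define $F=UV^*\in\Bbb B(\ell^2(\A))$. Then $F^2=UV^*UV^*=U(V^*U)V^*=UV^*=F$, so $F$ is an adjointable idempotent. Moreover $\textup{R}(F)\subseteq\textup{R}(U)$ and, conversely, $F(Ux)=UV^*Ux=Ux$ for every $x\in X$, hence $\textup{R}(F)=\textup{R}(U)$. Since $F$ is an adjointable idempotent, $\ell^2(\A)=\textup{R}(F)\dotplus\N(F)=\textup{R}(U)\dotplus\N(F)$ with $\N(F)$ closed, so $\N(F)$ is a closed direct complement of $\textup{R}(U)$ and $F$ is an oblique projection of the required type. (This is also consistent with Proposition~\ref{dual frames again}, which identifies $\N(F)=\N(V^*)$.) Finally, $F^*=VU^*$, so
\[
F^*U(U^*U)^{-1}=VU^*U(U^*U)^{-1}=V.
\]

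\smallskip

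For the ``if'' direction, suppose $V=F^*U(U^*U)^{-1}$ where $F\in\Bbb B(\ell^2(\A))$ is an adjointable idempotent with $\textup{R}(F)=\textup{R}(U)$ and $\ell^2(\A)=\textup{R}(U)\dotplus\N(F)$. Since $Ux\in\textup{R}(U)=\textup{R}(F)$ and $F$ restricted to its range is the identity, $FU=U$. Therefore
\[
U^*V=U^*F^*U(U^*U)^{-1}=(FU)^*U(U^*U)^{-1}=U^*U(U^*U)^{-1}=I.
\]
By Corollary~\ref{adjointable operators are Bessel sequences 2}, the operator $V^*\in\Bbb B(\ell^2(\A),X)$ is the synthesis operator of a (possibly outer) Bessel sequence $(y_n)_n$ in $X$, whose analysis operator is $V$. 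The identity $U^*V=I$ is exactly condition~\eqref{dual formula short}, so Lemma~\ref{dual relaxed} upgrades $(y_n)_n$ to a frame or outer frame dual to $(x_n)_n$ with analysis operator $V$.

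\smallskip

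The only conceptual point worth flagging is that one should work with \emph{adjointable} idempotents rather than arbitrary bounded idempotents: in Hilbert $C^*$-modules, a closed topological complement of $\textup{R}(U)$ need not yield a bounded idempotent that is adjointable, which is why the statement requires $F\in\Bbb B(\ell^2(\A))$. With that caveat, both directions are essentially formal manipulations, the key computations being $F^2=F$ and $FU=U$; no further obstacle arises.
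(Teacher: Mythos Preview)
Your proof is correct and follows essentially the same route as the paper: in the forward direction you set $F=UV^*$ and verify it is an adjointable idempotent with range $\textup{R}(U)$ (the paper outsources this to Proposition~\ref{dual frames again}), then compute $F^*U(U^*U)^{-1}=V$; in the converse you use $FU=U$ to get $U^*V=I$ and invoke Lemma~\ref{dual relaxed}. The only cosmetic difference is that the paper writes the final identity as $V^*U=I$ rather than $U^*V=I$, and leaves the appeal to Lemma~\ref{dual relaxed} implicit.
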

\begin{proof}
If $\yn$ is a frame or an outer frame dual to $\xn$ then its analysis operator $V \in \Bbb B(X,\ell^2(\textsf{A}))$ satisfies $V^*U=I$ so, by Proposition~\ref{dual frames again},
$\ell^2(\textsf{A})=\textup{R}(U) \dotplus \N(V^*)$
and $UV^*$ is the oblique projection to $\textup{R}(U)$ along $\N(V^*).$ Let $F=UV^*.$ Then $F^*U(U^*U)^{-1}=VU^*U(U^*U)^{-1}=V.$

To prove the converse, take $V$ as in (\ref{dual analysis 1}) and observe that $FU=U.$ Then we have $V^*U=(U^*U)^{-1}U^*FU=I.$
\end{proof}

We conclude this section with a discussion about frames and outer frames that have a unique dual.

\begin{theorem}\label{unique dual}
Let $(x_n)_n$ be a frame or an outer frame for a Hilbert $\A$-module $X$ with the analysis operator $U.$
Consider the following conditions:
\begin{enumerate}
\item[(a)] $\N(U^*)=\{0\}.$
\item[(b)] $\R(U)=\ell^2(\textsf{A}).$
\item[(c)] The canonical dual is the only dual (including both frames and outer frames) of $(x_n)_n.$
\end{enumerate}
Then (a) and (b) are mutually equivalent and imply (c). If $X$ is full, (c) is equivalent to (a) and (b).\\
\end{theorem}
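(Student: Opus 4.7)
The equivalence (a)$\iff$(b) is immediate: the analysis operator $U$ of a frame or outer frame is bounded from below (as noted in Proposition~\ref{analysis op outer f} and, for frames, in the introduction), hence has closed range, and the standard orthogonal decomposition $\ell^2(\A) = \R(U)\oplus \N(U^*)$ shows that $\N(U^*)=\{0\}$ if and only if $\R(U)=\ell^2(\A)$.

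For (a)(b)$\Rightarrow$(c), I will invoke Theorem~\ref{general dual} together with Remark~\ref{pseudoinverz operatora analize}: every analysis operator of a dual frame or outer dual of $(x_n)_n$ has the form $V = U(U^*U)^{-1} + P_{\R(U)^{\perp}}L$ for some $L\in \Bbb B(X,\ell^2(\A))$. Under (a) the projection $P_{\R(U)^{\perp}} = P_{\N(U^*)}$ vanishes, so $V = U(U^*U)^{-1}$ is forced, which is precisely the analysis operator of the canonical dual.

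The remaining implication (c)$\Rightarrow$(a), under the assumption that $X$ is full, I would prove by contraposition. Suppose $\N(U^*) \neq \{0\}$ and pick a nonzero $z\in \N(U^*).$ My plan is to exhibit a dual with analysis operator different from $U(U^*U)^{-1}$ by producing an adjointable $L:X\to \ell^2(\A)$ with $P_{\N(U^*)}L \neq 0$; the corresponding $V$ given by Theorem~\ref{general dual} then yields a second dual. A natural candidate is the finite rank operator $L=\theta_{z,y}$ for a suitable $y\in X$: it is adjointable (with adjoint $\theta_{y,z}:\ell^2(\A)\to X$), its range lies in $z\A \subseteq \N(U^*)$ since $\N(U^*)$ is a closed $\A$-submodule containing $z$, and therefore $P_{\N(U^*)}L = L$. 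Everything thus reduces to guaranteeing $L\neq 0$, i.e., $z\la y,x\ra \neq 0$ for some $x,y\in X.$

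This last step is the only delicate point, and it is where fullness of $X$ enters in an essential way. If, on the contrary, $z\la y,x\ra = 0$ for all $x,y\in X,$ then by linearity $za=0$ for every $a$ in the linear span of $\{\la y,x\ra:x,y\in X\}$; fullness says this span is dense in $\A$, so continuity of the right action forces $za=0$ for all $a\in \A.$ Inserting an approximate unit $(e_\lambda)_\lambda$ for $\A$ gives $z=\lim_\lambda z e_\lambda = 0,$ contradicting the choice of $z$. Hence suitable $x,y\in X$ exist, the operator $L=\theta_{z,y}$ is nonzero, and $V=U(U^*U)^{-1} + L$ is the analysis operator of a non-canonical dual, completing the contrapositive.
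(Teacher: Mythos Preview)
Your proof is correct and follows essentially the same strategy as the paper. The only stylistic difference is in the implication (c)$\Rightarrow$(a) under fullness: the paper argues directly, deducing from (c) that every $L\in\Bbb B(X,\ell^2(\A))$ has range in $\R(U)$, and then uses the family $L_{x,j}(y)=\la x,y\ra^{(j)}$ together with fullness to conclude $c_{00}(\A)\subseteq\R(U)$, whereas you argue by contraposition with a single rank-one operator $\theta_{z,y}$ for $z\in\N(U^*)\setminus\{0\}$. Both arguments hinge on the same two ingredients---the parametrization of duals from Theorem~\ref{general dual} and the fact that fullness supplies enough inner products $\la y,x\ra$ to separate nonzero module elements---so the distinction is purely cosmetic; your version is slightly more economical.
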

\begin{proof}
Since $\ell^2(\A)=\R(U) \oplus \N(U^*),$ (a) and (b) are equivalent. Also, (b) together with Theorem~\ref{general dual 1} immediately implies (c).

To prove the last statement, suppose that $X$ is full and that (c) is satisfied. Recall from Theorem~\ref{general dual} that each adjointable operator $L:X \to \ell^2(\A)$ gives rise to a frame or an outer frame for $X$ that is dual to $(x_n)_n$ and whose analysis operator is given by
$$
V=U(U^*U)^{-1}+ \left( I-U(U^*U)^{-1}U^*\right)L.
$$
By (c), we now have $\left( I-U(U^*U)^{-1}U^*\right)L=0$ for all $L\in \Bbb B(X,\ell^2(\A)),$ or equivalently, $L=U(U^*U)^{-1}U^*L$ for all $L\in \Bbb B(X,\ell^2(\A)).$ Recall from Remark~\ref{pseudoinverz operatora analize} that $U(U^*U)^{-1}U^*$ is the orthogonal projection to $\R(U).$ Hence, the above conclusion means that each operator $L\in \Bbb B(X,\ell^2(\A))$ takes values in $\R(U).$

Let us now take arbitrary $x\in X$ and $j\in \Bbb N.$ Define $L_{x,j}:X \to \ell^2(\A)$ by $L_{x,j}(y)=(0,\ldots,0,\la x,y\ra,0,\ldots)$ (with $\la x,y\ra$ on $j$-th position). Obviously, $L_{x,j}$ is an adjointable operator whose adjoint is given by $L_{x,j}^*((a_n)_n)=xa_j.$ By the preceding conclusion, all $L_{x,j}$ take values in $\R(U).$ Since $X$ is by our assumption full, this immediately implies that $c_{00}(\A)\subseteq \R(U).$ Since $\R(U)$ is closed, this gives us $\ell^2(\textsf{A}) \subseteq \R(U)$ and hence
$\R(U)=\ell^2(\textsf{A}).$
\end{proof}

\vspace{.1in}

Here we need to make a comment on Theorem 3.10 from \cite{HJLM}. Namely, that theorem states that all three above conditions are equivalent
without assuming that the ambient Hilbert module $X$ is full over $\A.$ However, there is a gap in the proof of Theorem 3.10 from \cite{HJLM} and this is the reason why we decided to include the preceding theorem in the present paper.

To show that the fullness assumption is really necessary in the proof of the implication (c) $\Rightarrow$ (b) from Theorem~\ref{unique dual}, we provide the following example.

\vspace{.1in}

Let $\textsf{B}$ be a unital $C^*$-algebra that is contained as a non-essential ideal in a unital $C^*$-algebra $\A.$ This means that $\textsf{B}^{\perp}=\{a\in \A: a\textsf{B}=\{0\}\} \not =\{0\}.$ Consider $X=\ell^2(\textsf{B})$ as a Hilbert $C^*$-module over $\A.$ Clearly, $X$ is not full as a Hilbert $\A$-module.
Denote by $e$ the unit element of $\textsf{B}.$ Obviously, the sequence $(e^{(n)})_n$ is a Parseval frame for $X.$ One easily concludes that the corresponding analysis operator $U:X \rightarrow \ell^2(\A)$ acts as the inclusion; hence $\R(U)=\ell^2(\textsf{B}).$ This means that $U$ is not a surjection and that $\R(U)^{\perp}=\N(U^*)$ is a non-trivial submodule of $\ell^2(\A).$

However, $(e^{(n)})_n$ has a unique dual frame (in fact, $(e^{(n)})_n$ is, being Parseval, self-dual). To prove this, recall that the analysis operator of each frame  dual to $(e^{(n)})_n$ (here there are no outer frame since $\A$ is unital) is given by
$$
V=U(U^*U)^{-1}+ \left( I-U(U^*U)^{-1}U^*\right)L,
$$
where $L:X \rightarrow \ell^2(\A)$ is an adjointable operator. We now observe that the Hewitt-Cohen factorization (Proposition 2.31 from \cite{RW}) forces each $L$ to take values in $\R(U)=\ell^2(\textsf{B}).$ Since, by Remark~\ref{pseudoinverz operatora analize}, $I-U(U^*U)^{-1}U^*$ is the orthogonal projection to $\R(U)^{\perp},$ we have
$\left( I-U(U^*U)^{-1}U^*\right)L=0$ for each $L\in \Bbb B(X,\ell^2(\A)).$
This together with the fact $U^*U=I$ shows that the above equality reduces, for all $L$, to $V=U$. Hence, there is only one dual frame, namely $(e^{(n)})_n$ itself.

\vspace{.1in}

Let us now state several consequences of Theorem~\ref{unique dual}.

\begin{cor}\label{module with unique dual}
A full Hilbert $\A$-module $X$ which possesses a frame or an outer frame $\xn$ with a unique dual is unitarilly equivalent to $\ell^2(\A).$
\end{cor}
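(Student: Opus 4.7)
The plan is to use Theorem~\ref{unique dual} to recognize that the analysis operator of $(x_n)_n$ is a bijective adjointable map, and then to promote this to a genuine unitary by replacing $(x_n)_n$ with its canonical Parseval frame. The main content of the corollary is thus essentially a direct consequence of the characterization (a) $\Leftrightarrow$ (b) $\Leftrightarrow$ (c) established in Theorem~\ref{unique dual} (which is precisely the place where fullness is used).

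More concretely, first I would let $U \in \Bbb B(X, \ell^2(\A))$ denote the analysis operator of $(x_n)_n$. Since $X$ is full and $(x_n)_n$ has a unique dual among frames and outer frames, Theorem~\ref{unique dual} gives both $\N(U^*) = \{0\}$ and $\R(U) = \ell^2(\A)$. Recall that, by definition, $U^*U$ is invertible in $\Bbb B(X)$, so $(U^*U)^{-\frac{1}{2}}$ exists. Set
$$V = U(U^*U)^{-\frac{1}{2}} \in \Bbb B(X,\ell^2(\A)).$$
(This is the analysis operator of the canonical Parseval frame $((U^*U)^{-\frac{1}{2}}x_n)_n$ associated to $(x_n)_n$, see the remarks after Theorem~\ref{full reconstruction} and in the proof of Corollary~\ref{frames and approximate units}.)

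A direct calculation then shows
$$V^*V = (U^*U)^{-\frac{1}{2}}U^*U(U^*U)^{-\frac{1}{2}} = I,$$
and
$$VV^* = U(U^*U)^{-1}U^*,$$
which, by Remark~\ref{pseudoinverz operatora analize} (formula \eqref{second term duals} in reverse), is the orthogonal projection of $\ell^2(\A)$ onto $\R(U)$. Since $\R(U) = \ell^2(\A),$ we obtain $VV^* = I$ as well. Hence $V$ is a unitary operator from $X$ onto $\ell^2(\A)$, which establishes the desired unitary equivalence.

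The only real subtlety — and the place where Theorem~\ref{unique dual} does the heavy lifting — is upgrading the mere invertibility of $U$ (which would only yield an adjointable isomorphism) to an actual unitary, and this is exactly why the canonical Parseval normalization is inserted. The fullness hypothesis enters solely through the implication (c) $\Rightarrow$ (b) of Theorem~\ref{unique dual}, without which the surjectivity of $U$ (equivalently, of $V$) could fail, as the example following Theorem~\ref{unique dual} demonstrates.
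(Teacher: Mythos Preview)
Your proof is correct and follows essentially the same approach as the paper: invoke Theorem~\ref{unique dual} to conclude that the analysis operator $U$ is invertible, and then observe that $U(U^*U)^{-\frac{1}{2}}$ is the desired unitary from $X$ onto $\ell^2(\A)$. The paper's version is terser---it simply asserts that invertibility of $U$ makes $U(U^*U)^{-\frac{1}{2}}$ unitary---whereas you spell out the computations $V^*V=I$ and $VV^*=I$ explicitly, but the argument is the same.
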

\begin{proof}
If $\xn$ is a frame or an outer frame for $X$ which has a unique dual, then its analysis operator $U\in\B(X,\ell^2(\A))$ is invertible by Theorem~\ref{unique dual}, so the operator $U(U^*U)^{-\frac{1}{2}}\in\B(X,\ell^2(\A))$ is unitary.
\end{proof}

\vspace{.1in}

\begin{cor}\label{unique dual-never in non-unital case}
Let  $X$ be a full Hilbert $C^*$-module over a non-unital $C^*$-algebra $\A.$ Then every frame for $X$ has at least two duals.
\end{cor}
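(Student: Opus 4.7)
The plan is to argue by contradiction. Suppose some frame $\xn$ for $X$, with analysis operator $U\in\B(X,\ell^2(\A))$, admits a unique dual in the sense of Definition~\ref{dual definition} (where both frames and outer frames are permitted as duals). Since $X$ is full, Theorem~\ref{unique dual} forces $\R(U)=\ell^2(\A)$. I will exploit this surjectivity to manufacture a unit element of $\A$, which contradicts the non-unitality assumption.

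First I would pass to the canonical Parseval normalization: set $y_n=(U^*U)^{-1/2}x_n$. A routine computation shows that $\yn$ is a Parseval frame whose analysis operator is $V=U(U^*U)^{-1/2}$, and that $V^*V=I$. Moreover, $VV^*=U(U^*U)^{-1}U^*$, which by Remark~\ref{pseudoinverz operatora analize} equals $P_{\R(U)}$. The hypothesis $\R(U)=\ell^2(\A)$ therefore forces $VV^*=I$, so $V$ is a unitary in $\B(X,\ell^2(\A))$.

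Fix any $k\in\Bbb N$. Using the formula $U^*a^{(k)}=x_ka$ recalled in the introduction, one gets $V^*a^{(k)}=y_ka$ for every $a\in\A$. Applying $VV^*=I$ to $a^{(k)}$ then yields
$$a^{(k)}=VV^*(a^{(k)})=V(y_ka)=\bigl(\la y_n,y_k\ra\,a\bigr)_n,$$
and comparing the $k$-th coordinate produces $\la y_k,y_k\ra\,a=a$ for every $a\in\A$. Setting $c=\la y_k,y_k\ra\in\A$ (which is self-adjoint) and taking adjoints of this identity gives $ac=a$ for every $a\in\A$ as well, so $c$ is a two-sided unit for $\A$, contradicting the non-unitality of $\A$.

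The argument is essentially bookkeeping once the right normalization is in place; the only substantive step is the recognition that $V$ is unitary, which follows at once from Remark~\ref{pseudoinverz operatora analize} together with $\R(U)=\ell^2(\A)$. No deeper obstacle arises, and fullness of $X$ is used precisely once, in invoking the implication (c)$\Rightarrow$(b) of Theorem~\ref{unique dual}.
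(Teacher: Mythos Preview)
Your argument is correct. Both you and the paper start from the same consequence of Theorem~\ref{unique dual}, namely that $U$ (equivalently $V=U(U^*U)^{-1/2}$) is a bijection, but you then diverge. The paper passes to the unitization $\tilde{\A}$, views $U$ as an adjointable map $\tilde{U}:X\to\ell^2(\tilde{\A})$ with closed range $\ell^2(\A)$, and obtains a contradiction from the fact that $\ell^2(\A)$ would have to be complementable in $\ell^2(\tilde{\A})$ while $\ell^2(\A)^\perp=\{0\}$. You instead stay inside $\A$ and exploit unitarity of $V$ directly: evaluating $VV^*=I$ on $a^{(k)}$ forces $\la y_k,y_k\ra$ to act as a unit, which is impossible. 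Your route is slightly more elementary---it avoids the unitization and the complementability machinery---and in fact anticipates the orthonormality relation $\la y_n,y_m\ra=\delta_{nm}e$ that the paper later records (in the unital setting) in Corollary~\ref{unique dual-unital case}(a). The paper's approach, on the other hand, makes the obstruction more structural: it shows exactly how the non-unitality of $\A$ manifests as a failure of $\ell^2(\A)$ to sit as a complemented summand in $\ell^2(\tilde{\A})$.
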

\begin{proof}
Suppose there is a frame $\xn$ for $X$ with the unique dual. Let $U\in \Bbb B(X,\ell^2(\A))$ be the corresponding analysis operator. By Theorem~\ref{unique dual}, $U$ is a bijection.

Regarding $X$ as a Hilbert $\tilde{\A}$-module, it is easy to verify that $U$ can be regarded as an adjointable operator $\tilde{U}\in \Bbb B(X,\ell^2(\tilde{\textsf{A}}))$ given by $\tilde{U}x=Ux,$ $x\in X.$
Then $\text{R}(\tilde{U})= \text{R}(U)= \ell^2(\textsf{A}).$
Since $\tilde{U}$ is bounded from below, its range  $\text{R}(\tilde{U})$ is closed in $\ell^2(\tilde{\textsf{A}}).$ So, being the range of an adjointable operator, a closed submodule $\ell^2(\textsf{A})$ of $\ell^2(\tilde{\textsf{A}})$ must be complementable in $\ell^2(\tilde{\textsf{A}}).$ But this is a contradiction since $\ell^2(\textsf{A})^{\perp}=\{0\}.$
(Namely,  if $(b_n)_n\in \ell^2(\tilde{\textsf{A}})$ belongs to $\ell^2(\A)^\perp,$ then for each $m$ it holds
$b_ma=\la (b_n)_n,(a^{(m)})_n\ra =0$ for all $a\in \A.$ Since $\A$ is an essential ideal of $\tilde{\A},$ it follows that $b_m=0$ for all $m.$)
\end{proof}

\vspace{.1in}

\begin{remark}
Corollary~\ref{unique dual-never in non-unital case} does not hold for outer frames. Indeed, if $\A$ is a non-unital $C^*$-algebra and $X=\ell^2(\A),$ then $(e^{(n)})_n$ is an outer frame for $X$ whose analysis operator $U$ is the identity, so by Theorem~\ref{unique dual}, $(e^{(n)})$ has a unique dual.
\end{remark}

\vspace{.1in}

\begin{remark}\label{el-2 je jedini}
By Corollary~\ref{module with unique dual} generalized Hilbert spaces $\ell^2(\A)$ with $\A$ $\sigma$-unital are, up to unitary equivalence, only countably generated Hilbert $C^*$-modules that possess frames or outer frames with unique duals. If $\A$ is unital, $(e^{(n)})_n$ is a Parseval frame for $\ell^2(\A)$ with this property. If $\A$ is non-unital, Corollary~\ref{unique dual-never in non-unital case} tells us that such frames in $\ell^2(\A)$ do not exist, so we only have outer frames with unique duals. As the example from the preceding remark shows, $(e^{(n)})_n$ is such an outer frame.
\end{remark}

\vspace{.1in}

For our last result of this section recall that each frame or outer frame $(x_n)_n$ for a Hilbert $C^*$-module $X$ possesses canonically associated Parseval frame or outer Parseval frame $(y_n)_n$. If $U$ denotes the analysis operator of $(x_n)_n$, $y_n$'s are given by $y_n=(U_M^*U_M)^{-\frac{1}{2}}x_n$, $n\in \Bbb N$. Here we must work with the extended operator $U_M$ if $(x_n)_n$ is outer. If, on the other hand, $x_n\in X$, for all $n\in \Bbb N$, then the preceding equality reduces to $y_n=(U^*U)^{-\frac{1}{2}}x_n\in X$, $n\in \Bbb N$. Observe that in both cases the analysis operator of $(y_n)_n$ is given by $U(U^*U)^{-\frac{1}{2}}$.

In the following corollary we consider a Hilbert $C^*$-module over a unital $C^*$-algebra (because of Corollary~\ref{unique dual-never in non-unital case}), so there are no outer frames.

\begin{cor}\label{unique dual-unital case}
Let  $X$ be a full Hilbert $C^*$-module over a unital $C^*$-algebra $\A.$ Suppose there exists a frame $\xn$ with a unique dual. Let $U$ be the analysis operator for $\xn.$  Then the following statements hold:
 \begin{enumerate}
   \item[(a)] The Parseval frame $(y_n)_n$ canonically associated with $(x_n)_n$ has a unique dual and
   $$\la y_n,y_m\ra=\delta_{nm}e,\quad \forall m,n\in \Bbb N.$$
   \item[(b)] $\la x_n,x_n\ra$ is invertible for every $n.$
   \item[(c)] If $\sum_{n=1}^\infty x_na_n=0$ for some $a_n\in\A,n\in\Bbb N,$ then $a_n=0$ for all $n\in \Bbb N.$
\end{enumerate}
\end{cor}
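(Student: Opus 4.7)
The plan is to exploit Theorem~\ref{unique dual}: since $X$ is full and $\xn$ has a unique dual, the analysis operator $U\in \Bbb B(X,\ell^2(\A))$ satisfies $\R(U)=\ell^2(\A)$ and is bounded below, hence is an invertible adjointable operator. All three parts then follow easily from this.

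For (c), $U$ invertible implies $U^*$ is invertible, in particular injective. Since $U^*((a_n)_n)=\sum_{n=1}^{\infty}x_na_n$, the hypothesis forces $(a_n)_n=0$. For (a), the analysis operator of $(y_n)_n=((U^*U)^{-\frac{1}{2}}x_n)_n$ is $V=U(U^*U)^{-\frac{1}{2}}$, which is invertible; a direct check gives $V^*V=I$, and combined with invertibility this forces $VV^*=I$, so $V$ is unitary. Theorem~\ref{unique dual} applied to $(y_n)_n$ yields its unique dual. For the orthonormality, use $V^*e^{(n)}=y_n$ (valid because $\A$ is unital) to compute
$$
\la y_n,y_m\ra=\la V^*e^{(n)},V^*e^{(m)}\ra=\la VV^*e^{(n)},e^{(m)}\ra=\la e^{(n)},e^{(m)}\ra=\delta_{nm}e.
$$

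For (b), the orthonormality from (a) yields $\|y_na\|^2=\|a^*\la y_n,y_n\ra a\|=\|a\|^2$, so the map $a\mapsto y_na$ is an isometric adjointable embedding of $\A$ into $X$. Since $x_n=(U^*U)^{\frac{1}{2}}y_n$ and $(U^*U)^{\frac{1}{2}}\in \Bbb B(X)$ is invertible, the adjointable map $K_n:\A\to X$, $K_n(a)=x_na=(U^*U)^{\frac{1}{2}}(y_na)$, is bounded below. By the standard fact that for an adjointable operator on a Hilbert $C^*$-module, bounded below is equivalent to invertibility of the modulus squared, $K_n^*K_n\in\Bbb B(\A)$ is invertible. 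A direct calculation gives $K_n^*K_n(a)=\la x_n,x_na\ra=\la x_n,x_n\ra a$, so $K_n^*K_n$ coincides with left multiplication $L_{\la x_n,x_n\ra}$ on $\A$. Any bounded right $\A$-linear operator on $\A$ equals left multiplication by its value at $e$; hence the inverse of $L_{\la x_n,x_n\ra}$ has the form $L_c$ for some $c\in \A$, and the identities $L_{\la x_n,x_n\ra}L_c=L_cL_{\la x_n,x_n\ra}=I_{\A}$ force $\la x_n,x_n\ra c=c\la x_n,x_n\ra=e$.

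The one slightly technical point is the last step in (b): extracting invertibility of $\la x_n,x_n\ra$ in $\A$ from invertibility of the left multiplication $L_{\la x_n,x_n\ra}$ on $\A$, which relies on the standard identification of right $\A$-linear bounded maps on $\A$ with left multiplications. Everything else reduces to routine manipulations built on the invertibility of $U$ and the unitarity of $V$.
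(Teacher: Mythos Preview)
Your argument for (a) coincides with the paper's. For (b), the paper takes a much shorter route: from $\langle y_n,y_n\rangle=e$ and $y_n=(U^*U)^{-1/2}x_n$ one gets
\[
e=\langle (U^*U)^{-1}x_n,x_n\rangle\le\|(U^*U)^{-1}\|\,\langle x_n,x_n\rangle,
\]
using the standard inequality $\langle Tx,x\rangle\le\|T\|\langle x,x\rangle$ for positive adjointable $T$. Thus $\langle x_n,x_n\rangle$ is bounded below by a positive multiple of $e$ and hence invertible. Your route via the operator $K_n$ and the identification $\Bbb B(\A)\cong\A$ is correct but considerably longer.

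There is, however, a gap in your proof of (c). The hypothesis only says $a_n\in\A$ with the series $\sum_{n} x_na_n$ convergent; it does \emph{not} assume $(a_n)_n\in\ell^2(\A)$, so writing $U^*((a_n)_n)$ is not a priori legitimate. The paper avoids this: it applies the bounded operator $(U^*U)^{-1/2}$ termwise to obtain $\sum_{n} y_na_n=0$, and then pairs with $y_m$, so that the orthonormality from (a) gives
\[
0=\Big\langle y_m,\sum_{n=1}^{\infty}y_na_n\Big\rangle=\sum_{n=1}^{\infty}\langle y_m,y_n\rangle a_n=a_m.
\]
You can also patch your own argument: since $V$ is unitary with $V^*e^{(n)}=y_n$, one has $Vy_n=e^{(n)}$, and applying $V$ to the convergent series $\sum_{n} y_na_n$ yields a convergent series $\sum_{n} e^{(n)}a_n$ in $\ell^2(\A)$, which forces $(a_n)_n\in\ell^2(\A)$; only then does injectivity of $U^*$ apply. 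Either way, the missing step is handling coefficient sequences not assumed to lie in $\ell^2(\A)$.
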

\begin{proof}
By Theorem~\ref{unique dual}, $U$ is a bijection. Since $\A$ is unital, $x_n=U^*(e^{(n)})$ for every $n\in \Bbb N.$
The analysis operator $V=U(U^*U)^{-\frac{1}{2}}$ for $\yn$ is an isometry and a bijection, hence unitary, so
$$\la y_n,y_m\ra=\la V^*(e^{(n)}),V^*(e^{(m)})\ra=\la (e^{(n)}),(e^{(m)})\ra=\delta_{nm}e,\quad \forall m,n\in\Bbb N.$$

Further,
$$e=\la y_n,y_n\ra= \la (U^*U)^{-1}x_n,x_n\ra\le \|(U^*U)^{-1}\|\la x_n,x_n\ra,\quad \forall n\in \Bbb N,$$
so $\la x_n,x_n\ra$ is invertible for all $n\in \Bbb N.$

Finally, if $\sum_{n=1}^\infty x_na_n=0$ for some sequence $(a_n)_n$ in $\A,$ then we also have  $(U^*U)^{-\frac{1}{2}}(\sum_{n=1}^\infty x_na_n)=0,$ i.e.,
$\sum_{n=1}^\infty y_na_n=0.$ Then for all $m\in \Bbb N$ we have
$$0=\la y_m,\sum_{n=1}^\infty y_na_n\ra=\sum_{n=1}^\infty\la y_m, y_na_n\ra=\sum_{n=1}^\infty\delta_{mn}a_n= a_m,$$
and (c) is proved.
\end{proof}

\vspace{.1in}

We conclude this section with a remark concerning finite frames and outer frames and their duals.

\begin{remark}\label{finite duals}
Let $(x_n)_{n=1}^N$ be a frame or an outer frame for a Hilbert $C^*$-module $X.$ A frame or an outer frame  $(y_n)_{n=1}^N$ is said to be dual to $(x_n)_{n=1}^N$ if $\sum_{n=1}^N y_n\la x_n,x\ra=x$ for all $x\in X.$

Observe that the analysis operator $U$ of $(x_n)_{n=1}^N$ takes values in $\textsf{A}^N.$ It is easy to see that, with this difference, i.e., with $\textsf{A}^N$ playing the role of $\ell^2(\textsf{A}),$ all the preceding results from this section survive. In particular, one can show that, for $N\in \Bbb N$, Hilbert $C^*$-modules $\A^N$ have properties analogous to those of $\ell^2(\A)$ discussed in Remark~\ref{el-2 je jedini}. We omit the details.
\end{remark}

\vspace{0.2in}

\section{Perturbations and tight approximations of frames}

\vspace{0.2in}

In this section we study neighborhoods of frames and outer frames. In fact, our results will be stated in terms of neighborhoods of the corresponding analysis operators.

There are several important results concerning perturbations of frames for Hilbert spaces (see \cite{CC} and references therein). Perturbations of frames for Hilbert $C^*$-modules are considered in \cite{HJLM1}.
A remarkable property of any frame $(x_n)_n$ for a Hilbert space is that one can always find a neighborhood of $(x_n)_n$ (defined in terms of $\ell^2$-distance of sequences or in terms of the distance of analysis/synthesis operators) such that each sequence belonging to that neighborhood (i.e.,  sufficiently close to $(x_n)_n$) is also a frame. As usual, the situation is more complicated in the modular context.

We begin with an example which shows that in any Hilbert $C^*$-module $X$ such that $M(X)\neq X$ we can find a frame for $X$ such that any neighborhood of its analysis operator contains an operator that is not the analysis operator of any frame for $X.$

\begin{example}\label{okolina od U primjer}
Let $X$ be a Hilbert $\A$-module such that $M(X)\neq X$ and $v\in M(X)\setminus X$ such that $\|v\|=1.$
Take arbitrary $\varepsilon>0.$

Let $\xn$ be a frame for $X.$ Then the sequence $0,x_1,x_2,x_3,\ldots$ is also a frame for $X,$ and its analysis operator $U\in\Bbb B(X,\ell^2(\A))$ is given by $Ux=(0, \la x_1,x\ra, \la x_2,x\ra,\ldots).$ Further, the sequence $\varepsilon v,x_1,x_2,x_3,\ldots$ is an outer frame for $X$ and its analysis operator $V\in\Bbb B(X,\ell^2(\A))$ is given by  $Vx=(\varepsilon\la v,x\ra, \la x_1,x\ra, \la x_2,x\ra,\ldots).$
Observe that the operator $V,$ being the analysis operator of an outer frame for $X,$ is not the analysis operator of any frame for $X.$
On the other hand,
$$\| U-V\|=\sup\{\varepsilon \|\la v,x\ra\|:x\in X,\|x\|\le 1\}
 \le\varepsilon \|v\|=\varepsilon.$$
\end{example}

\vspace{0.1in}

The above example suggests that, as in the preceding section, in order to obtain analogues of the classical results, one should include  outer frames into the consideration.

\vspace{0.1in}

We restrict our discussion to {\em infinite sequences}. Thereby, we shall understand that finite frames $(x_n)_{n=1}^N$ are extended to infinite sequences by adding infinitely many zero vectors.

\vspace{0.1in}

\begin{theorem}\label{okolina1}
Let $(x_n)_n$ be a frame or an outer frame for a Hilbert $\textsf{A}$-module $X$ with the analysis operator $U$ and the optimal lower frame bound $A.$ Suppose that $V\in \Bbb B(X,\ell^2(\textsf{A}))$ satisfies $\|U-V\|<\sqrt{A}.$ Then $V$ is the analysis operator of a frame or an outer frame $(y_n)_n$ for $X$ such that
$$\|x_n-y_n\|\leq \|U-V\|<\sqrt{A},\quad\forall n\in \Bbb N.$$
\end{theorem}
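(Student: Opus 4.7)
The plan is to reduce the perturbation statement to the correspondence between surjections $\ell^2(\A)\to X$ and frames or outer frames for $X$ already established in Theorem~\ref{stvarna korespondencija frameova i sinteza}. The essential observation is that the optimal lower bound $A$ for $(x_n)_n$ is exactly the square of the infimum $\inf\{\|Ux\|:\|x\|=1\}$, so $\|Ux\|\ge \sqrt{A}\,\|x\|$ for every $x\in X$. A standard reverse triangle inequality argument should immediately transfer this property, in a quantitative form, to any sufficiently close operator $V$.

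First, I would verify that $V$ is bounded from below. For $x\in X$,
$$\|Vx\|\ge \|Ux\|-\|(U-V)x\|\ge \bigl(\sqrt{A}-\|U-V\|\bigr)\|x\|,$$
and the hypothesis $\|U-V\|<\sqrt{A}$ makes the coefficient strictly positive. Consequently $V^*\in \Bbb B(\ell^2(\A),X)$ is surjective, and by Theorem~\ref{stvarna korespondencija frameova i sinteza} there is a unique frame or outer frame $(y_n)_n$ for $X$ whose analysis operator is $V$.

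Next, I would identify the frame members explicitly. From the construction in Theorem~\ref{stvarna korespondencija frameova i sinteza} (combined with Corollary~\ref{Msurjekcije} and Theorem~\ref{outer frame relaxed}) one has $y_n=V_M^*e^{(n)}$, where $V_M\in \Bbb B(M(X),M(\ell^2(\A)))$ is the strict extension of $V$; the same identity gives $x_n=U_M^*e^{(n)}$. Since $\|e^{(n)}\|=1$ in $M(\ell^2(\A))$ and $\|T_M\|=\|T\|$ for any adjointable $T$ by Remark~\ref{multiplier modules}(d), we obtain
$$\|x_n-y_n\|=\bigl\|(U_M^*-V_M^*)e^{(n)}\bigr\|\le \|U_M-V_M\|=\|U-V\|<\sqrt{A},$$
which is precisely the desired inequality. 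When $\A$ is unital the extensions are unnecessary and one simply uses $x_n=U^*e^{(n)}$, $y_n=V^*e^{(n)}$.

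The argument has no real obstacle: the only point requiring a little care is making sure that the identification $y_n=V_M^*e^{(n)}$ is legitimate in the non-unital case, so that one is allowed to compare $x_n-y_n$ via a single operator norm. This is handled by the fact, recorded in Theorem~\ref{outer frame relaxed}, that the synthesis operator of a strict frame evaluated at $e^{(n)}$ returns the $n$-th frame member, together with the passage from $V$ to $V_M$ afforded by Lemma~\ref{prosirivanje surjekcija}.
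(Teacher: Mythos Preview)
Your proposal is correct and follows essentially the same route as the paper: show $V$ is bounded from below via the triangle inequality and the optimal lower bound for $U$, invoke Theorem~\ref{stvarna korespondencija frameova i sinteza} to obtain $(y_n)_n$, and then compare $x_n=U_M^*e^{(n)}$ with $y_n=V_M^*e^{(n)}$ using $\|T_M\|=\|T\|$ from Remark~\ref{multiplier modules}(d). The paper's proof is virtually identical, differing only in minor presentational details.
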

\begin{proof}
Let $\|U-V\|=m<\sqrt{A}.$ Then
$$
\|Vx\|\ge \|Ux\|-\|Ux-Vx\|\ge \sqrt{A}\|x\|-m\|x\|=(\sqrt{A}-m)\|x\|
$$
 for all $x\in X.$ Thus, $V$ is bounded from below and consequently, $V^*\in \Bbb B(\ell^2(\textsf{A}),X)$ is a surjection. By Theorem~\ref{stvarna korespondencija frameova i sinteza}, $V^*$ is the synthesis operator of a frame or an outer frame $(y_n)_n$ for $X$ defined by  $y_n=(V_M)^*e^{(n)},\, n \in \Bbb N.$ Then, using Remark~\ref{multiplier modules}(d),  for each $n\in \Bbb N$ we have
\begin{eqnarray*}
\|x_n-y_n\|&=&\|(U_M-V_M)^*e^{(n)}\|=\|(U^*-V^*)_Me^{(n)}\|\\
&\le & \|(U^*-V^*)_M\|=\|U^*-V^*\|\\
&=&\|U-V\|<\sqrt{A}.
\end{eqnarray*}
Observe that the extended operators $U_M$ and $V_M$ coincide with $U$ and $V,$ respectively, when $\A$ is unital. On the other hand, if $\A$ is non-unital and $x_n$ or $y_n$ belongs to $M(X)\setminus X$ for some $n,$ then the expression $\|x_n-y_n\|$ is computed in the multiplier module $M(X).$
\end{proof}

Let us first note an easy consequence of this result. A similar result appeared in Theorem~3.16. of \cite{Jing}.

\begin{cor}\label{okolina1-corollary}
Let $(x_n)_n$ be a frame or an outer frame for a Hilbert $\textsf{A}$-module $X$ with the analysis operator $U$ and the optimal lower frame bound $A.$ If $\|x_j\|<\sqrt{A}$ for some $j,$ then $(x_n)_{n\neq j}$ is a frame or an outer frame for $X.$
\end{cor}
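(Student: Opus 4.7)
The plan is to apply Theorem~\ref{okolina1} to a suitable perturbation of $U$. Specifically, I would introduce the sequence $(y_n)_n$ obtained from $(x_n)_n$ by setting $y_j=0$ and $y_n=x_n$ for $n\neq j$. Since the defining frame inequalities for $(x_n)_{n\neq j}$ are literally the same as those for $(y_n)_n$ (a zero term contributes nothing), it suffices to show that $(y_n)_n$ is itself a frame or outer frame for $X$.

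The key observation is that the candidate analysis operator $V$ of $(y_n)_n$ differs from $U$ by the adjointable rank-one-type operator
\[
T: X\to\ell^2(\A),\qquad Tx=\la x_j,x\ra\, e^{(j)},
\]
which is well defined even when $x_j\in M(X)\setminus X$ because $\la x_j,x\ra\in\A$ for all $x\in X$ (see Remark~\ref{multiplier modules}(a)); its adjoint is $T^*(a_n)_n=x_ja_j$, which lands in $X$ since $M(X)\A=X$. A direct computation shows $V=U-T$, so $V\in\Bbb B(X,\ell^2(\A))$, and $(Vx)_n=\la y_n,x\ra$ for all $n$.

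Next I would verify $\|U-V\|=\|T\|=\|x_j\|$. The inequality $\|T\|\leq\|x_j\|$ is immediate from $\|Tx\|=\|\la x_j,x\ra\|\leq\|x_j\|\,\|x\|$. For the reverse inequality, one uses the formula $\|x_j\|=\sup\{\|\la x_j,x\ra\|:x\in X,\,\|x\|\leq 1\}$ from Remark~\ref{multiplier modules}(a), which holds regardless of whether $x_j$ lies in $X$ or in $M(X)\setminus X$.

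With this in hand, the hypothesis $\|x_j\|<\sqrt{A}$ gives $\|U-V\|<\sqrt{A}$, so Theorem~\ref{okolina1} applies: $V$ is the analysis operator of a frame or outer frame for $X$. By the uniqueness of the correspondence between surjections in $\Bbb B(\ell^2(\A),X)$ and frames or outer frames for $X$ (Theorem~\ref{stvarna korespondencija frameova i sinteza}, applied to $V^*$), that frame or outer frame must coincide with $(y_n)_n$. Hence $(x_n)_{n\neq j}$ is a frame or outer frame for $X$. I do not foresee a substantive obstacle; the only point that requires a bit of care is handling the outer case, where $x_j\in M(X)$ rather than $X$, but the characterization of the norm on $M(X)$ recalled above makes the step entirely routine.
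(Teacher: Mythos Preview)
Your proposal is correct and follows essentially the same route as the paper: define $(y_n)_n$ by zeroing out the $j$-th term, recognize that its analysis operator $V$ satisfies $\|U-V\|=\|x_j\|<\sqrt{A}$, and apply Theorem~\ref{okolina1}. The paper is slightly more terse (it first notes $(y_n)_n$ is Bessel/outer Bessel and lets $V$ be its analysis operator, then invokes Theorem~\ref{okolina1} directly), whereas you construct $V=U-T$ explicitly and invoke uniqueness from Theorem~\ref{stvarna korespondencija frameova i sinteza} to identify the resulting frame with $(y_n)_n$; both amount to the same argument.
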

\begin{proof} Let us define a sequence $\yn$ as $y_j=0$ and $y_n=x_n$ for $n\neq j.$ Since $\xn$ is a frame or an outer frame for $X,$ $\yn$ is a Bessel sequence or an outer Bessel sequence. Let $V\in \Bbb B(X,\ell^2(\A))$ be the analysis operator associated to $\yn.$ Since $\|(U-V)x\|=\|\la x_j,x\ra\|$ for all $x\in X,$ we have $\|U-V\|=\|x_j\|<\sqrt{A},$ so by Theorem~\ref{okolina1}, $\yn$ is a frame or an outer frame for $X.$ Then obviously, $(x_n)_{n\neq j}$ is also a frame or an outer frame for $X.$
\end{proof}

\begin{remark}\label{konus}
The open ball from Theorem~\ref{okolina1} is the largest open ball around $U$ with that property. Indeed, let us consider  an orthonormal basis $(\epsilon_n)_n$ for a Hilbert space $H$ as a frame for $H;$ the analysis operator $U$ is then an isometry and the optimal lower bound is $A=1.$ If we denote by $V$ the analysis operator of the Bessel sequence $\{0\}\cup(\epsilon_n)_{n\ge 2}$ (which is not a frame for $H$), then $\|V-U\|=1=\sqrt{A},$ so the boundary of the  open ball around $U$ with the radius $\sqrt{A}$ contains an operator which is not the analysis operator of any frame (or outer frame) for $H.$
\end{remark}

\vspace{0.1in}

At this point we need to make a comment on Theorem~3.2 from \cite{HJLM1}. The second statement of that theorem may be rephrased as follows:

{\em Let $(x_n)_n$ be a frame for a Hilbert $C^*$-module $X$ over a unital $C^*$-algebra $\textsf{A}$ with the analysis operator $U$ and the frame bounds $A$ and $B.$ Suppose that $(y_n)_n$ is a sequence in $X$ for which there exist constants $\lambda_1,\lambda_2,\mu\geq 0$ with the properties
\begin{equation}\label{han et al prva}
\max\{\lambda_1+\frac{\mu}{\sqrt{A}},\lambda_2\}<1,
\end{equation}
and
\begin{equation}\label{han et al druga}
\|\sum_{n=1}^N(x_n-y_n)a_n\|\leq \lambda_1\|\sum_{n=1}^Nx_na_n\|+\lambda_2\|\sum_{n=1}^Ny_na_n\|+\mu\|\sum_{n=1}^Na_n^*a_n\|^{\frac{1}{2}},
\end{equation}
for all finite sequences $(a_1,\ldots,a_N,0,0,\ldots)\in c_{00}(\A).$ Then $(y_n)_n$ is also a frame for $X.$
}

Clearly, one could easily deduce our Proposition 5.2 from this statement, at least in the unital case. Indeed, suppose we are given an operator $V\in \Bbb B(X,\ell^2(\textsf{A}))$ such that $\|U-V\|=\mu<\sqrt{A}.$ Put $y_n=V^*e^{(n)},$ $n \in \Bbb N.$ Then, obviously, we have for each $(a_1,\ldots,a_N,0,0,\ldots)\in c_{00}(\textsf{A}),$
$$
\|\sum_{n=1}^N(x_n-y_n)a_n\|=\|(U^*-V^*)(a_1,\ldots,a_N,0,0,\ldots)\|\leq \mu\|\sum_{n=1}^Na_n^*a_n\|^{\frac{1}{2}},
$$
which means that the sequence $(y_n)_n$ satisfies (\ref{han et al druga}) with $\lambda_1=\lambda_2=0.$ Since $\mu<\sqrt{A},$ we also have (\ref{han et al prva}); thus, by applying the above statement one could conclude that $(y_n)_n$ is a frame for $X.$

\vspace{.1in}

However, there is a gap in the proof of the above statement (i.e.,  the second part of Theorem 3.2. from \cite{HJLM1}) and it is not clear how one can fix the proof presented there.
Namely, that proof uses Lemma 2.7 and Proposition 2.8. from \cite{HJLM1} which, as we have seen in our Example~\ref{fail 1} and Remark~\ref{Heuser}, fail to be generally true. It seems that in order to obtain a result as in aforementioned Theorem 3.2 from \cite{HJLM1}, one should additionally include in the hypothesis that the sequence $(y_n)_n$ is Bessel.


\vspace{.1in}

We proceed with a remark that is known, but which we include for convenience of the reader.

\begin{remark}\label{optimalne}
Let $\xn$ be a frame or an outer frame for a Hilbert $\A$-module $X$ with optimal frame bounds $A$ and $B.$  Let us describe $A$ and $B$ in terms of the associated analysis operator $U.$

First, by Theorem~2.8 and Remark~2.9 from \cite{pas} we conclude that the optimal upper frame bound $B$ satisfies
\begin{equation}\label{B}
\sqrt{B}=\|U\|=\min\{M\ge 0: \|Ux\|\le M\|x\|,x\in X \}.
\end{equation}

Further, writing the relation  $\la Ux,Ux\ra\ge A\la x,x\ra, x\in X,$ in an equivalent form
$\la (U^*U)^{-\frac{1}{2}}x,(U^*U)^{-\frac{1}{2}}x\ra\le \frac{1}{A}\la x,x\ra, x\in X$ (obtained by replacing $x$ with  $(U^*U)^{-\frac{1}{2}}x$), and then applying \eqref{B} we get
\begin{eqnarray*}
\frac{1}{\sqrt{A}}&=&\|(U^*U)^{-\frac{1}{2}}\|\\
&=&\min\{M\ge 0: \|(U^*U)^{-\frac{1}{2}}x\|\le M\|x\|,x\in X \}\\
&&\textup{(replace $x$ with  $(U^*U)^{\frac{1}{2}}x$ and apply $\|Ux\|=\|(U^*U)^{\frac{1}{2}}x\|)$)}\\
&=&\min\{M\ge 0: \|Ux\|\ge \frac{1}{M}\|x\|,x\in X \}\\
&=&(\max\{m\ge 0: \|Ux\|\ge m\|x\|,x\in X \})^{-1}.
\end{eqnarray*}
Therefore,
\begin{equation}\label{A}
\sqrt{A}=\|(U^*U)^{-\frac{1}{2}}\|^{-1}=\max\{ m\ge 0: \|Ux\|\ge m\|x\|\}.
\end{equation}
\end{remark}
\vspace{0.1in}

The following corollary provides another useful property of the open ball with the center in $U$ that is considered in Theorem~\ref{okolina1}.

\begin{cor}\label{bliski su pseudodualni}
Let $(x_n)_n$ be a frame or an outer frame for a Hilbert $\textsf{A}$-module $X$ with the analysis operator $U$ and the optimal lower frame bound $A.$ Suppose that $V\in \Bbb B(X,\ell^2(\textsf{A}))$ satisfies $\|U-V\|<\sqrt{A}.$ Then $V^*U\in\Bbb B(X)$ is an invertible operator.
\end{cor}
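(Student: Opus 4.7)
The plan is to show that $U^*V$ is invertible; since $V^*U = (U^*V)^*$, invertibility of $U^*V$ immediately yields invertibility of $V^*U$. A direct estimate of the form $\|V^*Ux\| \geq \|U^*Ux\| - \|U^*-V^*\|\|Ux\|$ is doomed because $\|U\|$ can be arbitrarily large relative to $\sqrt{A}$, so we need to factor more cleverly by exploiting that $U^*U$ is invertible with precise norm control.

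The key identity I will use is the decomposition
\[
U^*V \;=\; U^*U + U^*(V-U) \;=\; U^*U\bigl(I + (U^*U)^{-1}U^*(V-U)\bigr),
\]
where the inner operator lives in $\Bbb B(X)$. From Remark~\ref{optimalne} (formula \eqref{A}) I will read off $\|(U^*U)^{-1/2}\| = 1/\sqrt{A}$, which gives $\|(U^*U)^{-1}\| = 1/A$. Then the $C^*$-identity for adjointable operators produces
\[
\bigl\|(U^*U)^{-1}U^*\bigr\|^2 \;=\; \bigl\|(U^*U)^{-1}U^*\cdot U(U^*U)^{-1}\bigr\| \;=\; \bigl\|(U^*U)^{-1}\bigr\| \;\le\; \tfrac{1}{A},
\]
so $\|(U^*U)^{-1}U^*\| \leq 1/\sqrt{A}$.

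Combining, I will obtain
\[
\bigl\|(U^*U)^{-1}U^*(V-U)\bigr\| \;\le\; \tfrac{1}{\sqrt{A}}\,\|U-V\| \;<\; 1,
\]
using the hypothesis $\|U-V\| < \sqrt{A}$. By the Neumann series, $I + (U^*U)^{-1}U^*(V-U)$ is invertible in $\Bbb B(X)$. Since $U^*U$ is invertible, it follows that $U^*V$ is invertible, and therefore so is its adjoint $V^*U$.

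The whole argument is short once one spots the right factorization; the main conceptual point (and the only place that could trip one up) is the sharp norm bound $\|(U^*U)^{-1}U^*\| \leq 1/\sqrt{A}$, which is where the optimal lower frame bound enters in exactly the form needed to match the hypothesis $\|U-V\| < \sqrt{A}$. This same bound explains why the ball of radius $\sqrt{A}$ (rather than some smaller radius involving $\|U\|$) is the natural one here, in line with Remark~\ref{konus}.
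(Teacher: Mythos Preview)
Your proof is correct and follows essentially the same approach as the paper: both arguments use a Neumann series after isolating the factor $(U^*U)^{-1}U^*$ (equivalently, its adjoint $U(U^*U)^{-1}$) and exploiting the sharp norm bound $\|(U^*U)^{-1}U^*\|=\|U(U^*U)^{-1}\|=1/\sqrt{A}$. The paper works directly with $V^*U$ by showing $\|I-V^*U(U^*U)^{-1}\|<1$, while you factor $U^*V=U^*U\bigl(I+(U^*U)^{-1}U^*(V-U)\bigr)$ and take adjoints at the end; this is the same estimate viewed from the adjoint side, so the difference is purely cosmetic.
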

\begin{proof}
Put again $\|U-V\|=m<\sqrt{A}.$ Then
\begin{eqnarray*}
\|x-V^*U(U^*U)^{-1}x\|&=&\|U^*U(U^*U)^{-1}x-V^*U(U^*U)^{-1}x\|\\
&=&\|(U^*-V^*)(U(U^*U)^{-1}x)\|\\
&\le & m \|U(U^*U)^{-1}x\|
\end{eqnarray*}
for all $x\in X.$ By taking the supremum over the unit ball in $X$ we get
$$\|I-V^*U(U^*U)^{-1}\|\le m\|U(U^*U)^{-1}\|.$$
Since $\|U(U^*U)^{-1}\|^2=\|(U(U^*U)^{-1})^*(U(U^*U)^{-1})\|=\|(U^*U)^{-1}\|=\frac{1}{A},$ we have
$$\|I-V^*U(U^*U)^{-1}\|\le \frac{m}{\sqrt{A}}<1.$$
This shows that $V^*U(U^*U)^{-1}$ is an invertible operator. In particular, $V^*U$ is invertible as well.
\end{proof}

\vspace{.1in}

\begin{remark}
We say that frames or outer frames $(x_n)_n$ and $(y_n)_n$ for a Hilbert $C^*$-module $X$ with the analysis operators $U$ and $V$ are {\em pseudodual} if $V^*U$ is an invertible operator. When this is the case, we have, for each $x\in X,$
$$
x=U^*V((U^*V)^{-1}x)=\sum_{n=1}^{\infty}x_n\la y_n, (U^*V)^{-1}x \ra= \sum_{n=1}^{\infty}x_n\la (V_M^*U_M)^{-1}y_n, x \ra.
$$
This shows that $(x_n)_n$ and $((V_M^*U_M)^{-1}y_n)_n$ are dual to each other. In an analogous way we conclude that
$(y_n)_n$ and $((U_M^*V_M)^{-1}x_n)_n$ are also dual to each other.
\end{remark}

\vspace{.1in}

Given a frame or an outer frame $(x_n)_n$ for a Hilbert $\textsf{A}$-module $X,$ we now want to find a Parseval frame for $X$ closest to $(x_n)_n,$ again measured in terms of distance of the corresponding analysis operators.
As one might expect, a solution is the Parseval frame canonically associated with $(x_n)_n,$ i.e.,  $(y_n)_n,$ where $y_n=(U_M^*U_M)^{-\frac{1}{2}}x_n,\,n\in \Bbb N,$ and $U$ is the analysis operator of $(x_n)_n.$ Recall that $(y_n)_n$ is outer if and only if $(x_n)_n$ is outer; nevertheless, its analysis operator is always equal to $U(U^*U)^{-\frac{1}{2}}.$

\begin{prop}\label{Parsevalov frame u okolini}
Let $(x_n)_n$ be a frame or an outer frame for a Hilbert $\textsf{A}$-module $X$ with the analysis operator $U$ and the optimal frame bounds $A$ and $B.$
If $(y_n)_n$ is the Parseval frame canonically associated with $(x_n)_n,$ then its analysis operator $U(U^*U)^{-\frac{1}{2}}$ satisfies
 $$\left\| U-U(U^*U)^{-\frac{1}{2}}\right\|=\max\left\{1-\sqrt{A},\sqrt{B}-1\right\}.$$
If $(y_n)_n$ is any Parseval  frame or outer Parseval frame for $X,$ then its analysis operator $V$ satisfies
$$\|U-V\|\geq \max\left\{1-\sqrt{A},\sqrt{B}-1\right\}.$$
\end{prop}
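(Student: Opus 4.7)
The plan is to split the statement into the equality (for the canonical Parseval associated with $(x_n)_n$) and the lower bound (for an arbitrary Parseval or outer Parseval dual of the underlying module). Both parts will be reduced to the norm of a self-adjoint expression in $U^*U$, which can then be read off from the spectral picture provided by Remark~\ref{optimalne}.

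For the equality, I would start from the factorization $U - U(U^*U)^{-1/2} = U\bigl(I-(U^*U)^{-1/2}\bigr)$ and apply the $C^*$-identity $\|T\|^2=\|T^*T\|$ to the adjointable operator $T = U(I-(U^*U)^{-1/2}) \in \Bbb B(X,\ell^2(\textsf{A}))$. Writing $S=U^*U$ and using continuous functional calculus for the positive invertible $S$, this gives
\[
\bigl\|U - U(U^*U)^{-1/2}\bigr\|^2 = \bigl\|(I-S^{-1/2})\,S\,(I-S^{-1/2})\bigr\| = \bigl\|(S^{1/2}-I)^2\bigr\| = \|S^{1/2}-I\|^2,
\]
so $\|U - U(U^*U)^{-1/2}\| = \|(U^*U)^{1/2}-I\|$. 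Since $(U^*U)^{1/2}$ is a positive invertible self-adjoint operator with $\|(U^*U)^{1/2}\|=\sqrt{B}$ and $\|((U^*U)^{1/2})^{-1}\|^{-1}=\sqrt{A}$ (Remark~\ref{optimalne}), its spectrum is contained in $[\sqrt{A},\sqrt{B}]$ with both endpoints attained. Hence $\|(U^*U)^{1/2}-I\| = \max\{|\sqrt{A}-1|,\,|\sqrt{B}-1|\}$, which coincides with $\max\{1-\sqrt{A},\,\sqrt{B}-1\}$ by a short case analysis on the position of $1$ relative to $[\sqrt{A},\sqrt{B}]$ (at most one of $1-\sqrt{A}$, $\sqrt{B}-1$ can be negative).

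For the lower bound, I would use that any Parseval frame or outer Parseval frame for $X$ has an analysis operator $V\in\Bbb B(X,\ell^2(\textsf{A}))$ satisfying $V^*V=I$ on $X$, so $\|Vx\|=\|x\|$ for every $x\in X$. The reverse triangle inequality then gives, for any unit vector $x\in X$,
\[
\|U-V\| \geq \|(U-V)x\| \geq \bigl|\|Ux\|-\|Vx\|\bigr| = \bigl|\|Ux\|-1\bigr|.
\]
Taking the supremum over the unit ball of $X$ and invoking Remark~\ref{optimalne}, which identifies $\sqrt{A}=\inf_{\|x\|=1}\|Ux\|$ and $\sqrt{B}=\sup_{\|x\|=1}\|Ux\|$, produces
\[
\|U-V\| \geq \sup_{\|x\|=1}\bigl|\|Ux\|-1\bigr| = \max\{|1-\sqrt{A}|,\,|\sqrt{B}-1|\} = \max\{1-\sqrt{A},\,\sqrt{B}-1\},
\]
by the same case analysis as above.

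I expect the argument to be essentially routine. The main thing requiring care is justifying that $\sqrt{A}$ and $\sqrt{B}$ are genuinely attained as the extremes of $\sigma((U^*U)^{1/2})$, so that the functional-calculus expression $\|(U^*U)^{1/2}-I\|$ realises the maximum rather than being merely bounded by it; this is exactly what Remark~\ref{optimalne} supplies, since the operator norm of a positive invertible self-adjoint element and the reciprocal of the norm of its inverse give the top and bottom of the spectrum. With that observation in hand, both inequalities match at the optimum, which yields the equality in the first assertion.
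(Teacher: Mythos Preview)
Your argument is correct and follows essentially the same route as the paper's proof: the equality is obtained via the $C^*$-identity and functional calculus reducing to $\|(U^*U)^{1/2}-I\|$, and the lower bound via the isometry property $\|Vx\|=\|x\|$ of a Parseval analysis operator together with the triangle inequality. The only cosmetic difference is that for the second assertion the paper derives $\|U-V\|\geq 1-\sqrt{A}$ and $\|U-V\|\geq \sqrt{B}-1$ as two separate estimates, whereas you package both into the single reverse triangle inequality $\|U-V\|\geq\bigl|\|Ux\|-1\bigr|$ and then take the supremum; this is equivalent and arguably slightly cleaner.
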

\begin{proof}
First, we have
\begin{eqnarray*}
  \left\|U-U(U^*U)^{-\frac{1}{2}}\right\|&=&\left\|\Big(U-U(U^*U)^{-\frac{1}{2}}\Big)^*\Big(U-U(U^*U)^{-\frac{1}{2}}\Big)\right\|^{\frac{1}{2}}\\
  &=& \left\|\Big((U^*U)^{-\frac{1}{2}}-I\Big)U^*U\Big((U^*U)^{-\frac{1}{2}}-I\Big)\right\|^\frac{1}{2}\\
&=&\left\|\Big(I-(U^*U)^\frac{1}{2}\Big)^2\right\|^\frac{1}{2}\\
&=&\left\|I-(U^*U)^\frac{1}{2}\right\|\\
&=&\max\left\{|1-\sqrt{A}|,|1-\sqrt{B}|\right\}\\
&=&\max\left\{1-\sqrt{A},\sqrt{B}-1\right\}.
\end{eqnarray*}

To prove the second assertion, suppose that $(y_n)_n$ is a Parseval frame or an outer Parseval frame for $X.$  Then its analysis operator $V\in \Bbb B(X,\ell^2(\textsf{A}))$ is an isometry, so  we have
$$ \|Ux\|\ge \|Vx\|-\|Vx-Ux\|=\|x\|-\|Vx-Ux\|\ge (1-\|U-V\|)\|x\|$$
for all $x\in X.$ By  \eqref{A} we get
$\sqrt{A}\ge 1-\|U-V\|,$ that is,  $\|U-V\|\geq 1-\sqrt{A}.$

On the other hand, $\sqrt{B}=\|U\|\le \|U-V\|+\|V\|=\|U-V\|+1,$ wherefrom $\|U-V\|\ge \sqrt{B}-1.$ Therefore, $\|U-V\|\ge \max\left\{1-\sqrt{A},\sqrt{B}-1\right\}.$
\end{proof}

\vspace{.1in}

In a similar fashion we can find the distance of a given frame or outer frame $(x_n)_n$ for $X$  with the optimal bounds $A$ and $B$ to the set of all tight frames and outer tight frames for $X.$ It turns out that this distance is equal to $\frac{\sqrt{B}-\sqrt{A}}{2}.$ For Hilbert space frames this question was discussed in \cite[Proposition 5.4]{FL3}.

\begin{prop}\label{napeti frame u okolini}
Let $(x_n)_n$ be a frame or an outer frame for a Hilbert $\textsf{A}$-module $X$ with the analysis operator $U$ and the optimal frame bounds $A$ and $B.$
Let $V_0=\frac{\sqrt{A}+\sqrt{B}}{2}U(U^*U)^{-\frac{1}{2}}.$ Then $V_0$ is the analysis operator of a $\left( \frac{\sqrt{A}+\sqrt{B}}{2}\right)^2$-tight frame or outer frame for $X$ for which
 $$\|U-V_0\|=\frac{\sqrt{B}-\sqrt{A}}{2}.$$
If $(y_n)_n$ is any tight frame or outer frame for $X$ with the analysis operator $V,$ then
$$\|U-V\|\geq\frac{\sqrt{B}-\sqrt{A}}{2}.$$
\end{prop}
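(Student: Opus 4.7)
The plan is to prove the three claims by mirroring the structure of the proof of Proposition~\ref{Parsevalov frame u okolini}, using the canonical Parseval frame as a stepping stone together with continuous functional calculus of the positive operator $U^*U$.

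First, to see that $V_0$ is the analysis operator of some $\bigl(\tfrac{\sqrt{A}+\sqrt{B}}{2}\bigr)^2$-tight frame or outer frame, I set $c=\tfrac{\sqrt{A}+\sqrt{B}}{2}$ and note that $V_0=cW$, where $W=U(U^*U)^{-1/2}$ is the analysis operator of the Parseval (or outer Parseval) frame canonically associated with $(x_n)_n$; in particular $W^*W=I$. Then $V_0^*V_0=c^2I$, so $V_0$ is bounded from below by $c>0$, and Theorem~\ref{stvarna korespondencija frameova i sinteza} guarantees that $V_0$ is the analysis operator of a unique frame or outer frame for $X$. The identity $V_0^*V_0=c^2I$ shows that this (outer) frame is $c^2$-tight.

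Next, I compute the distance $\|U-V_0\|$ by factoring $U-V_0=W\bigl((U^*U)^{1/2}-cI\bigr)$ and using $W^*W=I$ to reduce to $(U-V_0)^*(U-V_0)=\bigl((U^*U)^{1/2}-cI\bigr)^2$, hence $\|U-V_0\|=\|(U^*U)^{1/2}-cI\|$. The spectrum of $U^*U$ is contained in $[A,B]$, and both endpoints belong to it by optimality (combine Remark~\ref{optimalne} with $\|U^*U\|=B$ and $\|(U^*U)^{-1}\|=1/A$). Continuous functional calculus then evaluates this operator norm as $\max\{c-\sqrt{A},\sqrt{B}-c\}$, which equals $\tfrac{\sqrt{B}-\sqrt{A}}{2}$ at $c=\tfrac{\sqrt{A}+\sqrt{B}}{2}$.

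For the optimality bound, let $V$ be the analysis operator of any tight frame or outer frame, so $V^*V=c'I$ for some $c'>0$; then $\|Vx\|=\sqrt{c'}\|x\|$ for every $x\in X$. For each unit $x$ the triangle inequality gives $\bigl|\|Ux\|-\sqrt{c'}\bigr|\le\|U-V\|$; combined with the characterizations $\sqrt{B}=\sup_{\|x\|\le 1}\|Ux\|$ from~\eqref{B} and $\sqrt{A}=\inf_{\|x\|=1}\|Ux\|$ from~\eqref{A}, this yields both $\sqrt{B}\le\sqrt{c'}+\|U-V\|$ and $\sqrt{A}\ge\sqrt{c'}-\|U-V\|$; subtracting the two produces $\sqrt{B}-\sqrt{A}\le 2\|U-V\|$. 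The only subtle point in the argument is ensuring that $A$ and $B$ actually lie on the spectrum of $U^*U$, so that the functional-calculus computation of $\|(U^*U)^{1/2}-cI\|$ gives equality rather than merely an upper bound; this is immediate from the optimality of the frame bounds.
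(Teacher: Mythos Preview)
Your proof is correct and follows essentially the same approach as the paper. The paper obtains the lower bound by deriving the two inequalities $\|U-V\|\ge\lambda-\sqrt{A}$ and $\|U-V\|\ge\sqrt{B}-\lambda$ and adding them, which is exactly your subtraction argument rephrased; and it computes $\|U-V_0\|$ by referring back to the computation in Proposition~\ref{Parsevalov frame u okolini}, which amounts to your identity $(U-V_0)^*(U-V_0)=\bigl((U^*U)^{1/2}-cI\bigr)^2$ followed by the same spectral evaluation. Your explicit factorization $U-V_0=W\bigl((U^*U)^{1/2}-cI\bigr)$ through the isometry $W$ and your remark about $A,B\in\sigma(U^*U)$ make the argument a bit more transparent, but the substance is identical.
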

\begin{proof}
Suppose first that $(y_n)_n$ is a frame or an outer frame for $X$ whose analysis operator $V$ is of the form $V^*V=\lambda^2 I$ for some scalar $\lambda>0.$ Then, as in the preceding proof, we have
$$
\|Ux\|\ge \|Vx\|-\|Vx-Ux\|=\lambda \|x\|-\|Vx-Ux\|\ge (\lambda-\|U-V\|)\|x\|
$$
for all $x\in X,$ so by \eqref{A} we get $\sqrt{A}\ge \lambda-\|U-V\|,$ that is,
\begin{equation}\label{lambda1}
\|U-V\|\geq \lambda- \sqrt{A}.
\end{equation}
On the other side, $\sqrt{B}=\|U\|\le \|U-V\|+\|V\|=\|U-V\|+\lambda$; thus,
\begin{equation}\label{lambda2}
\|U-V\|\geq \sqrt{B}-\lambda.
\end{equation}
Adding (\ref{lambda1}) and (\ref{lambda2}) we get $\|U-V\|\geq \frac{\sqrt{B}-\sqrt{A}}{2}.$

Consider now $V_0=\frac{\sqrt{A}+\sqrt{B}}{2}U(U^*U)^{-\frac{1}{2}}.$ An easy verification shows that $V_0$ is the analysis operator of a frame or an outer frame $(y_n)_n$ given by  $y_n=\frac{\sqrt{A}+\sqrt{B}}{2}(U_M^*U_M)^{-\frac{1}{2}}x_n$ for $n\in \Bbb N.$ Since $V_0^*V_0=\left( \frac{\sqrt{A}+\sqrt{B}}{2}\right)^2I,$ this is a tight frame or outer frame. Then, repeating (each particular step of) the computation from the beginning of the preceding proof we get
\begin{eqnarray*}
\|U-V_0\|&=&\left\|U-\frac{\sqrt{A}+\sqrt{B}}{2}U(U^*U)^{-\frac{1}{2}}\right\|\\
&=&\max\left\{\frac{\sqrt{A}+\sqrt{B}}{2} - \sqrt{A}, \sqrt{B} - \frac{\sqrt{A}+\sqrt{B}}{2}\right\}\\
&=&\frac{\sqrt{B}-\sqrt{A}}{2}.
\end{eqnarray*}
\end{proof}

\vspace{.1in}

\begin{remark}
Observe that "the best tight approximation", namely the frame or an outer frame from the preceding proposition, is actually Parseval if and only if $\sqrt{A}+\sqrt{B}=2.$ If this is the case the resulting distance is equal to $1-\sqrt{A}=\sqrt{B}-1$ and this result is, for $\sqrt{A}+\sqrt{B}=2,$ in accordance with Proposition~\ref{Parsevalov frame u okolini}.
\end{remark}

\vspace{0.2in}


\section{Finite extensions of Bessel sequences}

\vspace{0.2in}

Finite extensions of Bessel sequences to frames in Hilbert spaces are recently discussed in \cite{BB}.
In this section we discuss the same problem in modular context.

First note that each Bessel sequence in an AFG Hilbert $C^*$-module $X$ admits a finite extension to a frame: given a Bessel sequence (finite or infinite) in $X$ it suffices to extend it by any finite set of generators for $X.$
Thus, here we are interested primarily in countably generated Hilbert $C^*$-modules which are not AFG.

As before, our discussion will include both frames and outer frames. We shall first characterize (again in terms of analysis operators) those Bessel sequences and outer Bessel sequences in a Hilbert $C^*$-module $X$ that admit finite extensions to frames or outer frames for $X.$ After that, more specifically, we shall describe Bessel sequences and outer Bessel sequences which allow finite extensions to Parseval frames or outer Parseval frames.

A related, but more restrictive question we address is the following: given a Bessel sequence in $X$, does there exist its finite extension to a frame for $X$? We shall find necessary and sufficient conditions under which one can extend a given Bessel sequence to a frame by adding finitely many elements of $X$. This is, indeed, a stronger property;
 we shall see in Example~\ref{pa naravno}, that there are Bessel sequences that do not admit finite extensions to frames, but which do admit finite extensions (by elements of $M(X)\setminus X$) to outer frames.

We begin with our most general result on finite extensions.

\begin{theorem}\label{fin_ext_outer}
Let $(x_n)_{n=1}^{\infty}$ be a Bessel or an outer Bessel sequence in a Hilbert $\textsf{A}$-module $X$ with the analysis operator $U.$ Then there is a finite extension of $\xn$ to a frame or an outer frame for $X$ if and only if there exist $V\in \Bbb B(X,\ell^2(\textsf{A}))$ and $\theta \in \Bbb F(M(X))$ such that $I-V^*U=\theta|_X$.
\end{theorem}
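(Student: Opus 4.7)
The plan is to read both directions as instances of a duality principle: a finite extension of $\xn$ to a frame or outer frame is essentially the same data as an adjointable operator $V$ together with a finite-rank part that accounts for the added members. Once the reconstruction identity is rewritten as an operator equation, Lemma~\ref{dual relaxed} will do the heavy lifting in both directions.

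For the forward direction, I would assume that $\xn$ extends to a frame or outer frame $\xn\cup (y_k)_{k=1}^N$ in $M(X)$. Let $W\in\Bbb B(X,\A^N)$ be defined by $Wx=(\la y_k,x\ra)_{k=1}^N$, so that the extended analysis operator is $\binom{U}{W}:X\to\ell^2(\A)\oplus\A^N\cong\ell^2(\A)$ and is bounded below. Selecting any dual---e.g.\ the canonical one---say $(w_n)_n\cup(v_k)_{k=1}^N$, with analysis operators $V_1\in\Bbb B(X,\ell^2(\A))$ and $V_2\in\Bbb B(X,\A^N)$, the reconstruction identity \eqref{dual formula a} rearranges (after taking adjoints) to $V_1^*U+V_2^*W=I$. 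Because $V_2^*Wx=\sum_{k=1}^N v_k\la y_k,x\ra$ with $v_k,y_k\in M(X)$, this is precisely the restriction to $X$ of $\theta:=\sum_{k=1}^N\theta_{v_k,y_k}\in\Bbb F(M(X))$; setting $V=V_1$ yields $I-V^*U=\theta|_X$.

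For the converse, suppose that $V\in\Bbb B(X,\ell^2(\A))$ and $\theta\in\Bbb F(M(X))$ satisfy $I-V^*U=\theta|_X$, and fix any representation $\theta=\sum_{k=1}^N\theta_{v_k,y_k}$ with $v_k,y_k\in M(X)$. I would propose $(y_k)_{k=1}^N$ as the candidate extension, so that $\xn\cup(y_k)_{k=1}^N$ is a Bessel or outer Bessel sequence (a union of such with a finite block is again such) with analysis operator $\binom{U}{W}$, where $Wx=(\la y_k,x\ra)_{k=1}^N$. Corollary~\ref{adjointable operators are Bessel sequences 2} identifies $V$ with the analysis operator of some Bessel or outer Bessel sequence $(w_n)_n$ in $M(X)$; combining it with $(v_k)_{k=1}^N$ gives another Bessel or outer Bessel sequence with analysis operator $\binom{V}{\tilde V}$, where $\tilde Vx=(\la v_k,x\ra)_{k=1}^N$. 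The hypothesis then gives at once
$$\binom{V}{\tilde V}^*\binom{U}{W}x=V^*Ux+\sum_{k=1}^N v_k\la y_k,x\ra=V^*Ux+\theta|_X(x)=x,\quad\forall x\in X,$$
so the two sequences are mutually dual in the sense of \eqref{dual formula a}. Lemma~\ref{dual relaxed} then promotes both to frames or outer frames for $X$, exhibiting the required finite extension.

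The conceptual heart of the argument---and the step I expect to require the most care---is recognizing that the finite-rank representation $\theta=\sum\theta_{v_k,y_k}\in\Bbb F(M(X))$ automatically places the added members $y_k$ in $M(X)$, which explains why the extension may a priori only be an outer frame even when $\xn\subseteq X$. The remaining technicalities---unions of Bessel/outer Bessel sequences with finite blocks, and switching between $X$- and $M(X)$-valued formulations---are routine consequences of the multiplier-module machinery developed in Section~3.
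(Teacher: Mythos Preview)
Your proposal is correct and follows essentially the same approach as the paper's proof: both split the analysis operator of the extended sequence into its infinite and finite blocks, identify the finite block with the restriction of a finite-rank operator on $M(X)$, and in the converse invoke Lemma~\ref{dual relaxed} to upgrade the dual Bessel pair to a frame/outer frame pair. The only difference is cosmetic---the paper encodes the block decomposition via powers of the unilateral shift on $\ell^2(\textsf{A})$, whereas you use the direct-sum identification $\ell^2(\textsf{A})\oplus\textsf{A}^N\cong\ell^2(\textsf{A})$.
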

\begin{proof} Suppose that a Bessel sequence $\xn$ admits a finite extension to a frame or an outer frame for $X.$
Let $f_1,\ldots,f_N\in M(X)$, $N\in \Bbb N$, be such that $(f_n)_{n=1}^{N} \cup (x_n)_{n=1}^{\infty}$ is a frame or an outer frame for $X.$
Let $U_1,F\in\Bbb B(X,\ell^2(\textsf{A}))$ be analysis operators of Bessel or outer Bessel sequences $(f_n)_{n=1}^N\cup (x_n)_{n=1}^{\infty}$ and $(f_n)_{n=1}^N\cup (0)_{n=1}^{\infty}$ respectively (so, in the later case we have $f_n$'s followed by infinitely many zeros).
Obviously, $U_1=F+S^NU,$ where $S$ denotes the unilateral shift on $\ell^2(\textsf{A}).$

Let us take any frame or outer frame dual to $(f_n)_{n=1}^N \cup (x_n)_{n=1}^{\infty},$ and write it, for convenience, as $(g_n)_{n=1}^{N} \cup (y_n)_{n=1}^{\infty}.$ Let $G, V,V_1\in\Bbb B(X,\ell^2(\textsf{A}))$ be the analysis operators of the Bessel sequences or outer Bessel sequences  $(g_n)_{n=1}^N\cup (0)_{n=1}^{\infty},$ $(y_n)_{n=1}^{\infty},$ and $(g_n)_{n=1}^N\cup (y_n)_{n=1}^{\infty},$ respectively. Again, $V_1=G+S^NV.$

Since $(g_n)_{n=1}^{N} \cup (y_n)_{n=1}^{\infty}$ and $(f_n)_{n=1}^N \cup (x_n)_{n=1}^{\infty}$ are dual to each other, it follows $V_1^*U_1=I.$ Since, obviously, $G^*S^N=0$ and $(S^N)^*F=0,$  we have
\begin{eqnarray*}
  I&=& (G+S^NV)^*(F+S^NU) \\
   &=& G^*F+V^*(S^N)^*F+G^*S^NU+V^*(S^N)^*S^NU\\
   &=&G^*F+V^*U,
\end{eqnarray*}
that is, $I-V^*U=G^*F.$ Let  $\theta\in\Bbb F(M(X))$ be defined as $\theta=\sum_{n=1}^N\theta_{g_n,f_n}.$ Then
$$G^*F(x)=\sum_{n=1}^Ng_n\la f_n, x\ra=\sum_{n=1}^N\theta_{g_n,f_n}(x)=\theta (x),\quad \forall x\in X,$$
so we conclude that $I-V^*U=\theta|_X.$

Conversely, suppose there is $V\in \Bbb B(X,\ell^2(\textsf{A}))$ and $\theta \in \Bbb F(M(X))$ such that $I-V^*U=\theta|_X$.
Let $f_1,\ldots,f_N,g_1,\ldots,g_N\in M(X)$ be such that $\theta=\sum_{n=1}^N\theta_{g_n,f_n}.$
By Corollary~\ref{adjointable operators are Bessel sequences 2} there is a Bessel sequence or an outer Bessel sequence $\yn$ such that $V$ is its analysis operator.
Then the sequences  $(f_n)_{n=1}^N \cup (x_n)_{n=1}^{\infty}$ and  $(g_n)_{n=1}^N \cup (y_n)_{n=1}^{\infty}$ are also Bessel or outer Bessel sequences. Let $F,G,U_1,V_1$ be as before. The same computation shows that
$$V_1^*U_1=G^*F+V^*U=\theta|_X+V^*U=I,$$
so, by Lemma~\ref{dual relaxed},
$(f_n)_{n=1}^N \cup (x_n)_{n=1}^{\infty}$ and  $(g_n)_{n=1}^N \cup (y_n)_{n=1}^{\infty}$ are frames or outer  frames for $X.$
\end{proof}

In the same way one proves the following corollary which concerns finite extensions of a Bessel sequence by elements of the original Hilbert $C^*$-module $X$ (i.e.,  without using elements from $M(X)\setminus X$) in which case we end up with a frame for $X$.

\begin{cor}\label{fin_ext}
Let $(x_n)_{n=1}^{\infty}$ be a Bessel sequence in a Hilbert $\textsf{A}$-module $X$ with the analysis operator $U.$ Then there is a finite extension of $\xn$ to a frame for $X$ if and only if there exists $V\in \Bbb B(X,\ell^2(\A))$ such that $I-V^*U\in \Bbb F(X).$
\end{cor}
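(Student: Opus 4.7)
The plan is to essentially replicate the argument in the proof of Theorem~\ref{fin_ext_outer}, paying close attention to keep all vectors inside $X$ rather than merely in $M(X)$; this will automatically upgrade $\Bbb F(M(X))$ to $\Bbb F(X)$ in the conclusion, and simultaneously upgrade ``frame or outer frame'' to just ``frame.''

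For the forward implication, suppose $(f_n)_{n=1}^N \cup (x_n)_{n=1}^\infty$ is a frame for $X$ with $f_1,\ldots,f_N \in X.$ Let $U_1$ denote its analysis operator. Because this sequence is a genuine frame for $X$, the invertible operator $(U_1^*U_1)^{-1}$ lives on $X$, so the canonical dual $((U_1^*U_1)^{-1}f_n)_{n=1}^N \cup ((U_1^*U_1)^{-1}x_n)_{n=1}^\infty$ has all its members in $X.$ Using this particular dual, the same decomposition $U_1 = F + S^NU$ and $V_1 = G + S^NV$ as in the proof of Theorem~\ref{fin_ext_outer} applies with $g_n := (U_1^*U_1)^{-1}f_n \in X,$ and yields $I - V^*U = G^*F = \sum_{n=1}^N \theta_{g_n,f_n}$ with both $f_n$ and $g_n$ in $X.$ Hence $I - V^*U \in \Bbb F(X).$

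For the converse, given $V \in \Bbb B(X,\ell^2(\A))$ with $I - V^*U = \sum_{n=1}^N \theta_{g_n,f_n}$ for some $f_n, g_n \in X,$ apply Corollary~\ref{adjointable operators are Bessel sequences 2} to obtain a (possibly outer) Bessel sequence $(y_n)_n$ whose analysis operator is $V.$ Repeating the concluding computation from the proof of Theorem~\ref{fin_ext_outer} verbatim shows that the sequences $(f_n)_{n=1}^N \cup (x_n)_{n=1}^\infty$ and $(g_n)_{n=1}^N \cup (y_n)_{n=1}^\infty$ have analysis operators $U_1, V_1$ satisfying $V_1^*U_1 = I,$ so by Lemma~\ref{dual relaxed} they are frames or outer frames for $X.$ Since $f_n \in X$ for $n = 1,\ldots,N$ and $x_n \in X$ for all $n$ (the hypothesis that $\xn$ is a Bessel sequence, not merely an outer one), the extension $(f_n)_{n=1}^N \cup (x_n)_{n=1}^\infty$ has no members outside $X,$ and thus is a frame for $X$ rather than merely an outer frame.

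The main obstacle, such as it is, amounts to the bookkeeping that distinguishes $\Bbb F(X)$ from $\Bbb F(M(X))$: in the forward direction one must select a dual whose members also lie in $X$, which the canonical dual provides, and in the converse one uses that the factorization of $\theta \in \Bbb F(X)$ as $\sum_{n=1}^N \theta_{g_n,f_n}$ can be chosen with both factors in $X.$ Once these two observations are in place, the result falls out of Theorem~\ref{fin_ext_outer} with essentially no additional computation.
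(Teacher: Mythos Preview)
Your proof is correct and follows essentially the same approach as the paper, which simply notes that Corollary~\ref{fin_ext} is proved ``in the same way'' as Theorem~\ref{fin_ext_outer}. Your choice of the canonical dual in the forward direction is exactly the right refinement needed to keep the $g_n$'s inside $X$, and your observation in the converse that membership in $\Bbb F(X)$ yields a factorization with both factors in $X$ is precisely the bookkeeping the paper leaves implicit.
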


The above property of a Bessel sequence is more restrictive than that from Theorem~\ref{fin_ext_outer}. To see this, we demonstrate an example of
a Bessel sequence that does not allow a finite extension to a frame, but  which does have (many) finite extensions to outer frames.

\begin{example}\label{pa naravno}
Take a separable infinite-dimensional  Hilbert space $H$ and consider $X=\Bbb K(H)$ as a Hilbert $\Bbb K(H)$-module in the standard way.
Let $(\epsilon_n)_n$ be an orthonormal basis for $H.$ For each $n\in \Bbb N$ denote by $e_n$ the orthogonal projection to $\text{span}\{\epsilon_n\}.$ Further, put $H_1=\overline{\text{span}}\{\epsilon_{2n-1}:n\in \Bbb N\}$ and
$H_2=\overline{\text{span}}\{\epsilon_{2n}:n\in \Bbb N\}.$ If we denote by $p_1$ and $p_2$ the corresponding orthogonal projections then, obviously, $p_1+p_2=e,$ where $e$ denotes the identity operator on $H.$

Consider now the sequence $(x_n)_n$ in $X$ defined by $x_n=e_{2n}$ for all $n \in \Bbb N.$ For each $a\in X$ we have
$$
\sum_{n=1}^{\infty}\langle a,x_n\rangle \langle x_n,a\rangle=\sum_{n=1}^{\infty}a^*e_{2n}a=a^*p_2a,
$$
with the convergence in norm, so by Proposition~\ref{Bessel relaxed}, $(x_n)_n$ is a Bessel sequence in $X.$ Let $U$ be its analysis operator.

Let us first show that $(x_n)_n$ does not admit a finite extension to a frame for $X.$ To prove this, suppose the opposite. Then by Corollary~\ref{fin_ext}, there exist $f_1,\ldots,f_N,g_1,\ldots,g_N\in X$ for some $N\in \Bbb N,$ and an operator $V\in \Bbb B(X,\ell^2(\Bbb K(H)))$ (which is the analysis operator of a Bessel or an outer Bessel sequence $(y_n)_n$ in $X$) such that $I-V^*U=\sum_{n=1}^N\theta_{f_n,g_n}.$
This means that
$$
a-\sum_{n=1}^{\infty}y_n\langle x_n,a\rangle =\sum_{n=1}^Nf_n\langle g_n,a\rangle,\quad \forall a\in X.
$$
Denote $b=\sum_{n=1}^Nf_ng_n^*$ and observe that $b\in X=\Bbb K(H).$ Now the preceding equality can be rewritten as
$$
a-\sum_{n=1}^{\infty}y_ne_{2n}a=ba,\quad\forall a\in X.
$$
In particular, if $a$ is any operator in $\Bbb K(H)$ whose range is contained in $H_1,$ we have $a=ba.$  This in turn implies $b\epsilon_{2n-1}=\epsilon_{2n-1}$ for all $n\in \Bbb N$ wherefrom we conclude that a closed infinite dimensional subspace $H_1$ of $H$ is contained in the range of a compact operator $b$, which is a contradiction.

Next we show that $\xn$ can be extended to an outer frame for $X$ by adding a single vector from $M(X)\setminus X.$ Namely, if $c\in M(X)=\Bbb B(H)$ is invertible, then $cc^*\ge \frac{1}{\|c^{-1}\|^2}e$, so
$$\la a,c\ra\langle c,a\rangle+\sum_{n=1}^{\infty}\la a,x_n\ra\langle x_n,a\rangle\ge \la a,c\ra\langle c,a\rangle = a^*cc^*a\ge \frac{1}{\|c^{-1}\|^2}a^*a$$
for all $a\in X.$ Thus, $c, x_1,x_2,\ldots$  is an outer frame for $X.$

Moreover, $\xn$ can be extended to an outer Parseval frame for $X,$ again by adding just one vector from $M(X)\setminus X.$ Indeed, we have for each $x\in X$
$$\la a,p_1\ra\langle p_1,a\rangle+\sum_{n=1}^{\infty}a^*e_{2n}a=a^*p_1a+a^*p_2a=a^*a$$
so the sequence $p_1,x_1,x_2,x_3,\ldots$ is an outer Parseval frame for $X.$
\end{example}

\vspace{.1in}

Our next goal is to describe those Bessel sequences that admit finite extensions to Parseval frames. Note that this question, in contrast to the preceding one, is non-trivial even for AFG Hilbert $C^*$-modules.
First we need some auxiliary results. We begin with a lemma which is a variant of Lemma 5.5.4 from \cite{T}.

\begin{lemma} \label{lemma_from_MT}
Let $X$ be a Hilbert $\textsf{A}$-module. Let $x\in X$ and $a\in\textsf{A}$ be such that $0\le a\le \la x,x\ra.$ Then there exists $z\in X$ such that $a=\la z,z\ra.$
\end{lemma}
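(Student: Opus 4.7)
My plan is to construct the required $z$ as a norm limit of a sequence $z_n = x c_n$ in $X$, where $c_n \in \A$ is chosen by functional calculus so that $\la z_n, z_n\ra = c_n^* \la x,x\ra c_n$ converges in norm to $a$. Writing $b = \la x,x\ra$, the natural candidate suggested by the heuristic $c = b^{-1/2}a^{1/2}$ (which formally gives $c^*bc = a$, but $b^{-1/2}$ need not exist) is the regularization
\begin{equation*}
c_n = (b+\tfrac{1}{n})^{-1/2}\, a^{1/2}.
\end{equation*}
Since $(b+\tfrac{1}{n})^{-1/2} \in \tilde{\A}$ and $a^{1/2}\in \A$, and $\A$ is an ideal in $\tilde{\A}$, we have $c_n \in \A$ and hence $z_n = xc_n \in X$. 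Using the commutative functional calculus of $b$ one gets
\begin{equation*}
a - c_n^* b c_n = a - a^{1/2}\,\tfrac{b}{b+1/n}\,a^{1/2} = \tfrac{1}{n}\, a^{1/2}(b+\tfrac{1}{n})^{-1}a^{1/2}.
\end{equation*}

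The key estimate will be $\|a^{1/2}(b+\tfrac{1}{n})^{-1}a^{1/2}\|\leq 1$. This follows from the hypothesis $a\leq b$, which gives $a\leq b+\tfrac{1}{n}$, hence $(b+\tfrac{1}{n})^{-1/2}a(b+\tfrac{1}{n})^{-1/2}\leq 1$ by order-preservation under the congruence by $(b+\tfrac{1}{n})^{-1/2}\in\tilde{\A}$; then the $C^*$-identity $\|d^*d\|=\|d\|^2$ applied to $d=(b+\tfrac{1}{n})^{-1/2}a^{1/2}$ transfers this bound to $\|a^{1/2}(b+\tfrac{1}{n})^{-1}a^{1/2}\|\leq 1$. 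Combined with the displayed identity, this yields $\|a-\la z_n,z_n\ra\|\leq 1/n$, so $\la z_n,z_n\ra\to a$ in norm.

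Next I would check that $(z_n)$ is Cauchy in $X$. For $m\geq n$, an analogous computation inside the commutative $C^*$-subalgebra generated by $b$ gives
\begin{equation*}
a-\la z_n,z_m\ra = a^{1/2}\bigl(1 - g_n(b)g_m(b)\bigr)a^{1/2},
\quad g_k(t)=\sqrt{t/(t+\tfrac{1}{k})}.
\end{equation*}
Since $g_k$ is monotone increasing in $k$ and $g_k\leq 1$, for $m\geq n$ we have $1-g_ng_m\leq 1-g_n^2 = 1/(n(b+\tfrac{1}{n}))$, and the same bound as above shows $\|a-\la z_n,z_m\ra\|\leq 1/n$. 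Expanding
\begin{equation*}
\la z_n-z_m,z_n-z_m\ra = (\la z_n,z_n\ra-a) - (\la z_n,z_m\ra-a) - (\la z_m,z_n\ra-a) + (\la z_m,z_m\ra-a),
\end{equation*}
the three bounds combine to give $\|z_n-z_m\|^2\leq 4/n$, so $(z_n)$ is Cauchy.

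Finally, I set $z=\lim z_n\in X$. Continuity of the inner product gives $\la z,z\ra = \lim \la z_n,z_n\ra = a$, as required. The main delicate point in the plan is that $(b+\tfrac{1}{n})^{-1/2}$ converges only strongly (not in norm) to the partial isometry implementing the formal $b^{-1/2}$, so norm convergence must be extracted from the sandwich by $a^{1/2}$ via the inequality $a\leq b+\tfrac{1}{n}$; everything else is straightforward functional calculus and the module $C^*$-identity.
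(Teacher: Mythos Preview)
Your proof is correct. It shares the paper's core regularization idea---setting $c_n=(b+\tfrac1n)^{-1/2}a^{1/2}$ with $b=\la x,x\ra$---but the execution is genuinely different and somewhat more direct.

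The paper first invokes the Cohen--Hewitt type factorization $x=v\la v,v\ra$ to produce $y=v\la v,v\ra^{1/4}$ with $\la y,y\ra^2=\la x,x\ra$, then works with $c=\la y,y\ra$ (so $a\le c^2$) and proves that the \emph{scalar} sequence $b_n=(c+\tfrac1n)^{-1/2}a^{1/2}$ is Cauchy in $\A$. That step uses Dini's theorem on $t\mapsto t(t+\tfrac1n)^{-1/2}\to\sqrt t$; the passage through a square root of $\la x,x\ra$ is exactly what makes the limit function continuous at $0$ so that Dini applies. The final element is $z=y\cdot\lim b_n$.

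You bypass both the factorization and Dini by working directly with $z_n=xc_n$ and proving that $(z_n)$ is Cauchy \emph{in $X$} rather than $(c_n)$ in $\A$. Indeed your $(c_n)$ need not be Cauchy in $\A$ when $0\in\sigma(b)$, since $b^{1/2}(b+\tfrac1n)^{-1/2}$ does not converge in norm; but the sandwich by $a^{1/2}$ on both sides of $\la z_n,z_m\ra$ together with $a\le b$ rescues norm convergence at the level of inner products, which is all you need. The order-theoretic estimate $0\le 1-g_n(b)g_m(b)\le \tfrac1n(b+\tfrac1n)^{-1}$ is clean and avoids any appeal to uniform convergence of functions. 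Your route is shorter and uses fewer external ingredients; the paper's route has the minor advantage of exhibiting $z$ explicitly as $y$ times an element of $\A$.
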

\begin{proof} Let $v\in X$ be such that $x=v\la v,v\ra.$ Let $y=v\la v,v\ra^{\frac{1}{4}}.$ Then
$$
\la y,y\ra^2=\la v,v\ra^{\frac{1}{4}}\la v,v\ra \la v,v\ra^{\frac{1}{4}}\la v,v\ra^{\frac{1}{4}}\la v,v\ra \la v,v\ra^{\frac{1}{4}}=\la v,v\ra^3=\la x,x\ra.
$$
Write $\la y,y\ra =c.$ Then we have $0\le a\le c^2.$ Put
$$b_n=(c+\frac{1}{n}e)^{-\frac{1}{2}}a^{\frac{1}{2}},\quad n\in \Bbb N.$$
Here $e$ denotes the unit in $\textsf{A}$ or in $\tilde{\textsf{A}}$, but in both cases $b_n\in \textsf{A}$ for all $n.$
Then for all $m,n\in \Bbb N,$ $n\ge m$ we have

\begin{align*}
&\|b_n-b_m\|^2=\|(b_n-b_m)(b_n-b_m)^*\|\\
&=\left\|\left(\left(c+\frac{1}{n}e\right)^{-\frac{1}{2}}-\left(c+\frac{1}{m}e\right)^{-\frac{1}{2}}\right) a \left(\left(c+\frac{1}{n}e\right)^{-\frac{1}{2}}-\left(c+\frac{1}{m}e\right)^{-\frac{1}{2}}\right)  \right\|\\
&\le\left\|\left(\left(c+\frac{1}{n}e\right)^{-\frac{1}{2}}-\left(c+\frac{1}{m}e\right)^{-\frac{1}{2}}\right) c^2 \left(\left(c+\frac{1}{n}e\right)^{-\frac{1}{2}}-\left(c+\frac{1}{m}e\right)^{-\frac{1}{2}}\right)  \right\|\\
&=\left\|c\left(c+\frac{1}{n}e\right)^{-\frac{1}{2}}-c\left(c+\frac{1}{m}e\right)^{-\frac{1}{2}}\right\|^2.
\end{align*}

The sequence $(f_n)_n,$ $f_n(t)=t(t+\frac{1}{n})^{-\frac{1}{2}}$ is an increasing sequence of positive continuous functions that converges pointwise for $t \in [0,\|c\|]$ to the continuous  function $f(t)=\sqrt{t}.$ By Dini's theorem $(f_n)_n$ converges to $f$ uniformly on $[0,\|c\|]$; hence,
\begin{equation}\label{korijen_iz_c}
\lim_{n\rightarrow \infty} c\left(c+\frac{1}{n}e\right)^{-\frac{1}{2}}=c^{\frac{1}{2}}.
\end{equation}
Now the above computation shows that $(b_n)_n$ is a Cauchy sequence in $\textsf{A}.$ Put $b=\lim_{n\rightarrow \infty} b_n.$ Then we have
$\la y,y\ra^{\frac{1}{2}}b=\lim_{n\rightarrow \infty}\la y,y\ra^{\frac{1}{2}}b_n$ and $b^*\la y,y\ra^{\frac{1}{2}}=\lim_{n\rightarrow \infty}b_n^*\la y,y\ra^{\frac{1}{2}}$ which implies
$b^*\la y,y\ra b=\lim_{n\rightarrow \infty}b_n^*\la y,y\ra b_n,$ that is,
\begin{equation}\label{predzadnja jednakost}
\la yb,yb\ra=\lim_{n\rightarrow \infty}\la yb_n,yb_n\ra.
\end{equation}

On the other hand,

\begin{align*}
&\|a-\la yb_n,yb_n\ra\|=\|a-b_n^*\la y,y\ra b_n\|\\
&=\left\|a-a^{\frac{1}{2}}\left(c+\frac{1}{n}e\right)^{-\frac{1}{2}}c\left(c+\frac{1}{n}e\right)^{-\frac{1}{2}}a^{\frac{1}{2}}\right\|\\
&=\left\|\left(a^{\frac{1}{2}}\left(e-c\left(c+\frac{1}{n}e\right)^{-1}\right)^{\frac{1}{2}} \right) \left(a^{\frac{1}{2}}\left(e-c\left(c+\frac{1}{n}e\right)^{-1}\right)^{\frac{1}{2}} \right)^*\right\|\\
&=\left\|\left(a^{\frac{1}{2}}\left(e-c\left(c+\frac{1}{n}e\right)^{-1}\right)^{\frac{1}{2}} \right)^* \left(a^{\frac{1}{2}}\left(e-c\left(c+\frac{1}{n}e\right)^{-1}\right)^{\frac{1}{2}} \right)\right\|\\
&=\left\|\left(e-c\left(c+\frac{1}{n}e\right)^{-1}\right)^{\frac{1}{2}}a\left(e-c\left(c+\frac{1}{n}e\right)^{-1}\right)^{\frac{1}{2}}\right\|\\
&\le \left\|c^2\left(e-c\left(c+\frac{1}{n}e\right)^{-1}\right)\right\|\\
&=\left\|c^2-c^3\left(c+\frac{1}{n}e\right)^{-1}\right\|.
\end{align*}

It follows from  \eqref{korijen_iz_c} that $\lim_{n\rightarrow \infty}c^3(c+\frac{1}{n}e)^{-1}=c^2.$
Hence, the above computation shows that $\lim_{n\rightarrow \infty}\la yb_n,yb_n\ra=a.$ This, together with \eqref{predzadnja jednakost}, gives us $\la yb,yb\ra =a.$ Put $z=yb.$
\end{proof}

\vspace{.1in}

\begin{prop} \label{positive_and_finite}
Let $X$ be a Hilbert $\textsf{A}$-module and $a\in \textsf{A}, a\ge 0$, such that $a=\sum_{n=1}^N\la u_n,v_n\ra$ for some  $N\in \Bbb N$ and $u_1,\ldots,u_N,v_1,\ldots,v_N\in X.$
Then there exist $x_1,\ldots,x_N\in X$ such that $a=\sum_{n=1}^N\la x_n,x_n\ra.$
\end{prop}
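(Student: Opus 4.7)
My plan is to reduce the problem to a single application of Lemma~\ref{lemma_from_MT} in the product Hilbert module $X^N = X \oplus \cdots \oplus X$, exploiting the polarization identity to produce a single vector $w \in X^N$ with $0 \le a \le \la w,w\ra$.

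First, I would package the data into $X^N$ by setting $u = (u_1,\ldots,u_N)$ and $v = (v_1,\ldots,v_N)$, so that by definition of the inner product on $X^N$ we have $\la u,v\ra = \sum_{n=1}^N \la u_n,v_n\ra = a$. Since $a$ is positive, in particular $a = a^*$, and hence $\la v,u\ra = \la u,v\ra^* = a^* = a$ as well.

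Next, I would apply the polarization-type identity
\begin{equation*}
\la u+v, u+v\ra - \la u-v, u-v\ra = 2\la u,v\ra + 2\la v,u\ra = 4a.
\end{equation*}
Since $\la u-v, u-v\ra \ge 0$, this yields the key estimate
\begin{equation*}
0 \le a \le \tfrac{1}{4}\la u+v, u+v\ra = \la w, w\ra, \qquad \text{where } w := \tfrac{1}{2}(u+v) \in X^N.
\end{equation*}
Now I would invoke Lemma~\ref{lemma_from_MT} applied to the Hilbert $\A$-module $X^N$, the element $w \in X^N$, and the positive $a \in \A$ with $0 \le a \le \la w,w\ra$. This produces some $z \in X^N$ with $\la z,z\ra = a$. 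Writing $z = (x_1,\ldots,x_N)$ with $x_n \in X$, the definition of the inner product on $X^N$ gives $a = \la z,z\ra = \sum_{n=1}^N \la x_n,x_n\ra$, which is the desired decomposition.

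I do not anticipate any serious obstacle here: the only thing to verify is that Lemma~\ref{lemma_from_MT} applies to $X^N$, which is immediate since $X^N$ is itself a Hilbert $\A$-module and the lemma is stated for arbitrary such modules. The only real content of the proof is the choice of $w = \tfrac{1}{2}(u+v)$ so that $\la w,w\ra$ dominates $a$; one cannot simply take $w = u$ or $w = v$, since the mixed inner products need not satisfy any obvious inequality.
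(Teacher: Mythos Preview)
Your proof is correct and follows essentially the same approach as the paper: both arguments use the polarization identity in the direct sum $X^N$ to obtain $0 \le a \le \la w,w\ra$ with $w = \tfrac{1}{2}(u+v)$, and then apply Lemma~\ref{lemma_from_MT} in $X^N$ to produce the required $z = (x_1,\ldots,x_N)$. The only cosmetic difference is that you package into $X^N$ before polarizing, whereas the paper polarizes componentwise first and then passes to $X^N$.
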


\begin{proof}
 It follows from the polarization formula and self-adjointness of $a$ that
$$4a=\sum_{n=1}^N \la u_n+v_n,u_n+v_n\ra-\sum_{n=1}^N \la u_n-v_n,u_n-v_n\ra,$$
wherefrom we get
\begin{equation}\label{a_le_xx}
a\le \sum_{n=1}^N \la\frac{1}{2}u_n+\frac{1}{2}v_n,\frac{1}{2}u_n+\frac{1}{2}v_n\ra.
\end{equation}
Let $X_N=\oplus_{n=1}^N X$ be a direct sum of $N$ copies of $X,$ which is a Hilbert $\textsf{A}$-module with the inner product defined by $\la x,y\ra=\sum_{n=1}^N \la x_n,y_n\ra,$ where $x=(x_1,\ldots,x_N)$ and  $y=(y_1,\ldots,y_N).$

If we denote $u=(\frac{1}{2}u_1+\frac{1}{2}v_1,\ldots,\frac{1}{2}u_N+\frac{1}{2}v_N),$ then \eqref{a_le_xx} reads as $0\le a\le \la u,u\ra.$ By Lemma~\ref{lemma_from_MT}, applied in $X_N,$  there exists $z\in X_N$ such that
$a=\la z,z\ra.$
If we put $z=(x_1,\ldots,x_N)$ then $a=\sum_{n=1}^N\la x_n,x_n\ra.$
\end{proof}

\vspace{.1in}

Regarding a right Hilbert $\textsf{A}$-module $X$ as a left Hilbert $\Bbb {K}(X)$-module we immediately get the following corollary. It refines the statement of Corollary~\ref{oblik pozitivnog kompaktnog operatora} in a natural way.

\begin{cor}\label{pozitivan konacnog ranga}
Let $X$ be a Hilbert $\textsf{A}$-module and $T\in\Bbb F(X)$ such that $T\ge 0.$ Then there exist $N\in \Bbb N$ and $x_1,\ldots,x_N\in X$ such that $T=\sum_{n=1}^N \theta_{x_n,x_n}.$
\end{cor}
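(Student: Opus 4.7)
The plan is to invoke Proposition~\ref{positive_and_finite} in the setting where $X$ is regarded as a (full) left Hilbert $\Bbb K(X)$-module. Recall, as already used in the proof of Theorem~\ref{Parseval by Brown}, that $X$ carries a natural left Hilbert $\Bbb K(X)$-module structure with action $(S,x)\mapsto Sx$ and inner product $[x,y] := \theta_{x,y} \in \Bbb K(X)$, and the resulting norm coincides with the original norm on $X$.

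First I would observe that Proposition~\ref{positive_and_finite}, although formulated for right Hilbert $C^*$-modules, admits a verbatim analog for left Hilbert $C^*$-modules. Indeed, its proof uses only the polarization identity (which is symmetric in left/right roles) together with Lemma~\ref{lemma_from_MT}; the latter in turn relies only on the Hewitt--Cohen factorization (Proposition~2.31 of \cite{RW}) and Dini's theorem, both of which apply without modification in the left-module context.

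Next, since $T \in \Bbb F(X)$, there exist $N \in \Bbb N$ and vectors $u_1,\ldots,u_N,v_1,\ldots,v_N \in X$ such that
$$T \;=\; \sum_{n=1}^N \theta_{u_n,v_n} \;=\; \sum_{n=1}^N [u_n,v_n].$$
As $T$ is a positive element of the $C^*$-algebra $\Bbb K(X)$, the left-module version of Proposition~\ref{positive_and_finite} applied to the left Hilbert $\Bbb K(X)$-module $X$ produces $x_1,\ldots,x_N \in X$ with
$$T \;=\; \sum_{n=1}^N [x_n,x_n] \;=\; \sum_{n=1}^N \theta_{x_n,x_n},$$
which is the desired conclusion, with the same $N$ as in the initial expression for $T$.

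The only step requiring any care is the transfer of Lemma~\ref{lemma_from_MT} and Proposition~\ref{positive_and_finite} to left Hilbert $C^*$-modules; this is the main (but entirely routine) point, since the arguments invoke only symmetric $C^*$-algebraic tools.
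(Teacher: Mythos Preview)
Your proposal is correct and follows exactly the paper's approach: the paper's proof consists of the single sentence that regarding $X$ as a left Hilbert $\Bbb K(X)$-module (with inner product $[x,y]=\theta_{x,y}$) yields the corollary immediately from Proposition~\ref{positive_and_finite}. You have merely made explicit the routine left-module transfer that the paper leaves implicit.
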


\vspace{.1in}

We are now ready to characterize Bessel sequences and outer Bessel sequences in Hilbert $C^*$-modules that admit finite extensions to Parseval frames or outer Parseval frames.

\begin{theorem}\label{fin_ext_Pars_outer}
Let $(x_n)_{n=1}^{\infty}$ be a Bessel or an outer Bessel sequence in a Hilbert $\textsf{A}$-module $X$ with the analysis operator $U$ and the optimal Bessel bound $B.$
Then there is a finite extension of $\xn$ to a Parseval or an outer Parseval frame for $X$ if and only if and there exists $\theta\in \Bbb F(M(X))$ such that  $I-U^*U=\theta|_X$ and  $B\le 1$.
\end{theorem}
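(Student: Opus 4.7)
\emph{Necessity.} Suppose $(f_1,\dots,f_N,x_1,x_2,\dots)$ with $f_k\in M(X)$ extends $(x_n)_n$ to a Parseval or outer Parseval frame for $X$, and let $U_1\in\Bbb B(X,\ell^2(\A))$ denote its analysis operator, so that $U_1^*U_1=I$. A direct decomposition using $\la x,\theta_{f_k,f_k}x\ra=\la x,f_k\ra\la f_k,x\ra$ and polarization gives
\[
U_1^*U_1 \;=\; \Bigl(\sum_{k=1}^N\theta_{f_k,f_k}\Bigr)\Big|_X + U^*U,
\]
so setting $\theta:=\sum_{k=1}^N\theta_{f_k,f_k}\in\Bbb F(M(X))$ yields $\theta|_X=I-U^*U$. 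Since $\theta|_X\ge 0$, we deduce $U^*U\le I$, i.e.\ $B\le 1$.

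\emph{Sufficiency.} Now assume $\theta\in\Bbb F(M(X))$ with $\theta|_X=I-U^*U$ and $B\le 1$. Because $B\le 1$ is equivalent to $U^*U\le I$ in $\Bbb B(X)$, we already have $\theta|_X\ge 0$. The main obstacle is to upgrade this to positivity of $\theta$ itself in $\Bbb B(M(X))$: positivity on a (strictly) dense submodule does not, in principle, lift automatically. The resolution is to invoke Remark~\ref{multiplier modules}(d), according to which the strict-extension assignment $T\mapsto T_M$ is a bijection $\Bbb B(X)\to\Bbb B(M(X))$ that preserves norms, adjoints, and (by strict continuity and uniqueness of strict extension) composition; hence it is a $*$-isomorphism of $C^*$-algebras. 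Since $\theta\in\Bbb B(M(X))$, uniqueness forces $\theta=(\theta|_X)_M$, and positivity is preserved by $*$-isomorphisms, giving $\theta\ge 0$ in $\Bbb B(M(X))$.

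Finally, apply Corollary~\ref{pozitivan konacnog ranga} to $\theta$, viewed as a positive element of $\Bbb F(M(X))$ for the Hilbert $M(\A)$-module $M(X)$: there exist $f_1,\dots,f_N\in M(X)$ with $\theta=\sum_{k=1}^N\theta_{f_k,f_k}$. The concatenated sequence $(f_1,\dots,f_N,x_1,x_2,\dots)$ is then a Bessel or outer Bessel sequence in $X$ whose analysis operator $U_1$ satisfies
\[
U_1^*U_1 \;=\; \Bigl(\sum_{k=1}^N\theta_{f_k,f_k}\Bigr)\Big|_X + U^*U \;=\; \theta|_X+U^*U \;=\; I,
\]
so it is a Parseval frame for $X$ (when all $f_k\in X$) or an outer Parseval frame (when some $f_k\in M(X)\setminus X$), as required.
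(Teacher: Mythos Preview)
Your proof is correct and follows essentially the same route as the paper's: both directions hinge on the operator identity $U_1^*U_1=\bigl(\sum_{k}\theta_{f_k,f_k}\bigr)|_X+U^*U$, and in the sufficiency direction both arguments lift positivity of $I-U^*U$ to $M(X)$ via the extension map $T\mapsto T_M$ before invoking Corollary~\ref{pozitivan konacnog ranga} on $M(X)$. Your explicit justification that $T\mapsto T_M$ is a $*$-isomorphism (hence positivity-preserving) and that $\theta=(\theta|_X)_M$ is a welcome elaboration of what the paper states more tersely as ``its extension $I_{M(X)}-U_M^*U_M$ is positive and $I_{M(X)}-U_M^*U_M=\theta$''; the mention of polarization in the necessity part is unnecessary, since the operator identity holds directly.
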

\begin{proof}
Suppose there exists a finite sequence $(f_n)_{n=1}^{N}$ in $M(X)$ such that
  $(f_n)_{n=1}^{N} \cup (x_n)_{n=1}^{\infty}$ is a Parseval frame or an outer Parseval frame for $X.$ Then for every $x\in X$ it holds
\begin{equation}\label{Pars_eq}
\sum_{n=1}^N \la x,f_n \ra \la f_n,x\ra+\sum_{n=1}^\infty \la x,x_n \ra\la x_n,x\ra = \la x,x\ra.
\end{equation}
This implies $\sum_{n=1}^\infty \la x,x_n\ra\la x_n,x\ra\le \la x,x\ra$ for all $x\in X,$ so $B\le 1.$
Further, if we denote $\theta=\sum_{n=1}^N\theta_{f_n,f_n}\in \Bbb F(M(X)),$ then \eqref{Pars_eq} gives us $\theta|_X+U^*U=I,$ that is, $I-U^*U=\theta|_X.$

Conversely, suppose $B\le 1$ and $I-U^*U=\theta|_X$ for some $\theta\in \Bbb F(M(X)).$ Since $U^*U\le B\cdot I=I$ we have $I-U^*U\ge 0.$ Then its extension $I_{M(X)}-U_M^*U_M$ is positive and $I_{M(X)}-U_M^*U_M=\theta,$ so we can apply Corollary~\ref{pozitivan konacnog ranga} to $M(X)$ and  $I_{M(X)}-U_M^*U_M.$ Therefore,  $I_{M(X)}-U_M^*U_M=\sum_{n=1}^N\theta_{f_n,f_n}$ for some $N\in\Bbb N$ and $f_1,\ldots, f_N\in M(X).$ Now $I_{M(X)}=U_M^*U_M+\sum_{n=1}^N\theta_{f_n,f_n}$ gives us
$$\sum_{n=1}^N \la x,f_n \ra \la f_n,x\ra+\sum_{n=1}^\infty \la x,x_n \ra\la x_n,x\ra=\la x,x\ra,\quad\forall x\in X.$$
Hence, $(f_n)_{n=1}^N \cup (x_n)_{n=1}^{\infty}$ is a Parseval frame or an outer Parseval frame for $X$ depending on whether all $f_n$'s are in $X$ or not.
\end{proof}

The following corollary is concerned with Bessel sequences and their finite extensions to Parseval frames (so, again, as in Corolarry~\ref{fin_ext} we are now interested only in extensions obtained by finitely many elements of the original module $X$). It is convenient to split the statement into two cases: when $X$ is not AFG, and when $X$ is AFG.

\begin{cor}\label{fin_ext_Pars} Let $X$ be a Hilbert $\textsf{A}$-module.  Let $(x_n)_{n=1}^{\infty}$ be a Bessel sequence in $X$  (either finite or infinite) with the analysis operator $U$ and the optimal Bessel bound $B.$
\begin{enumerate}
  \item[(a)] If $X$ is not AFG, then $\xn$ is finitely extendable to a Parseval frame for $X$ if and only if $I-U^*U\in \Bbb F(X)$ and $B=1.$
  \item[(b)] If $X$ is AFG, then $\xn$ is finitely extendable to a Parseval frame for $X$ if and only if $B\le 1.$
\end{enumerate}
\end{cor}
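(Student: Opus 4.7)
My approach is to deduce the corollary directly, mirroring the proof of Theorem~\ref{fin_ext_Pars_outer} but with $X$ playing the role that $M(X)$ played there; the critical difference is that Corollary~\ref{pozitivan konacnog ranga} is applied inside $X$ rather than inside $M(X),$ which guarantees that the supplementary vectors belong to $X.$

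I would first dispatch the direction ``$\Leftarrow$'' common to (a) and (b). Assuming $I-U^*U\in\Bbb F(X)$ and $B\le 1,$ we have $U^*U\le I$ and hence $I-U^*U\ge 0.$ Corollary~\ref{pozitivan konacnog ranga} applied to $X$ then produces $f_1,\ldots,f_N\in X$ with $I-U^*U=\sum_{n=1}^N\theta_{f_n,f_n},$ which rewrites as the Parseval reconstruction formula for $(f_n)_{n=1}^N\cup\xn.$ Conversely, if $(f_n)_{n=1}^N\cup\xn$ is a Parseval frame with all $f_n\in X,$ then expanding the Parseval identity gives $I-U^*U=\sum_{n=1}^N\theta_{f_n,f_n}\in\Bbb F(X)$ and $U^*U\le I,$ whence $B\le 1.$

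To finish (b), I need to show that when $X$ is AFG the condition $I-U^*U\in\Bbb F(X)$ is automatic. My plan is to prove that $\Bbb F(X)=\Bbb B(X)$ in the AFG case. A finite generating set $(g_n)_{n=1}^N$ for $X$ induces a surjection $T\in\Bbb B(\A^N,X),$ which by Proposition~\ref{finite druga napomena} is the synthesis operator of a genuine finite frame (not outer, since $M(X)=X$ for AFG $X$ by Remark~\ref{multiplier modules}(a)). Passing to the canonically associated Parseval frame $(y_n)_{n=1}^M$ and using its reconstruction formula gives $I=\sum_{n=1}^M\theta_{y_n,y_n}\in\Bbb F(X),$ whence $\Bbb F(X)=\Bbb B(X)$ by the ideal property; in particular $I-U^*U$ lies in $\Bbb F(X)$ without further hypotheses and the equivalence of (b) is complete.

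The only nontrivial remaining issue is strengthening $B\le 1$ to $B=1$ in part (a). Suppose, for contradiction, that $X$ is not AFG and $\xn$ admits a finite Parseval extension with $B<1.$ Then $I-U^*U\ge (1-B)I>0$ is invertible in $\Bbb B(X),$ and since $\Bbb F(X)$ is a left ideal of $\Bbb B(X),$ the identity $I=(I-U^*U)^{-1}(I-U^*U)$ places $I\in\Bbb F(X).$ A representation $I=\sum_{n=1}^N\theta_{y_n,z_n}$ then shows every $x\in X$ equals $\sum_{n=1}^Ny_n\la z_n,x\ra,$ contradicting the assumption that $X$ is not AFG. Hence $B=1.$ The step I expect to require the most care is the equality $\Bbb F(X)=\Bbb B(X)$ for AFG $X,$ since it has to thread together Proposition~\ref{finite druga napomena}, the normalization to a Parseval frame, and the reconstruction formula in order to place $I$ in $\Bbb F(X).$
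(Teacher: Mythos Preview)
Your proposal is correct and follows essentially the same route as the paper: both establish the baseline equivalence (finite Parseval extension $\Leftrightarrow$ $I-U^*U\in\Bbb F(X)$ and $B\le 1$) and then argue in (a) that $I-U^*U\in\Bbb F(X)$ forces non-invertibility (equivalently $B\ge 1$) lest $I\in\Bbb F(X)$ and $X$ be AFG, while in (b) the condition $I-U^*U\in\Bbb F(X)$ is vacuous because $\Bbb F(X)=\Bbb B(X)$. The only difference is one of emphasis: the paper simply invokes the known fact $\Bbb B(X)=\Bbb K(X)=\Bbb F(X)$ for AFG modules, whereas you supply a self-contained derivation via Proposition~\ref{finite druga napomena} and the canonical Parseval frame.
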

\begin{proof}
First, in the same fashion as in the preceding proof we obtain in both cases: $\xn$ is finitely extendable to a Parseval frame for $X$ if and only if $I-U^*U=\theta\in \Bbb F(X)$ and $B\le 1.$ We now proceed by specific arguments in each of the above two cases.

(a) Suppose $X$ is not AFG and $\xn$ has a finite extension to a Parseval frame for $X.$ Then $I-U^*U$ is non-invertible in $\Bbb  B(X),$ since otherwise we would have
$$I= (I-U^*U)^{-1}(I-U^*U)=(I-U^*U)^{-1}\theta,$$
which, by the ideal property of $\Bbb F(X),$ gives $I\in \Bbb  F(X).$ But this would imply that $X$ is AFG, contrary to our assumption.
Now, non-invertibility of $I-U^*U$ means that $1\in\sigma(U^*U),$ so $B=\|U^*U\|\ge 1.$

(b) Suppose $X$ is AFG and $\xn$ has a finite extension to a Parseval frame for $X.$ Here we observe that a general condition $I-U^*U\in \Bbb F(X)$ obtained at the beginning of the proof is automatically satisfied. Indeed, since $X$ is AFG, we have $\Bbb B(X)=\Bbb K(X)=\Bbb F(X),$ so $I-U^*U\in \Bbb F(X)$ for all $U\in \Bbb B(X,\ell^2(\A)).$
\end{proof}

\begin{remark}\label{pa_naravno_2}
Recall that it can happen that $X$ is not an AFG Hilbert $\A$-module, but $M(X)$ is an AFG Hilbert $M(\A)$-module. (As an example, one can take $X=\A$, where $\A$ is a non-unital $C^*$-algebra). In such cases, each Bessel sequence or an outer Bessel sequence with the optimal Bessel bound $B< 1$ admits a finite extension to
an outer Parseval frame for $X.$
To see this, denote the corresponding analysis operator by $U$. Then, since $M(X)$ is an AFG module, $(I-U^*U)_M\in \Bbb F(M(X))$, so the remaining condition from Theorem~\ref{fin_ext_Pars_outer}, namely
 $I-U^*U=\theta|_X$ for some $\theta\in \Bbb F(M(X))$ is also satisfied.

On the other hand, such sequences cannot allow finite extensions to Parseval frames because of $B<1$ (see Corollary~\ref{fin_ext_Pars}(a)).

\end{remark}


\vspace{.1in}

We conclude with an application of Corollary~\ref{fin_ext_Pars}.

\begin{cor}\label{appr_unit}
Let $\textsf{A}$ be a non-unital $\sigma$-unital $C^*$-algebra. Let $(a_n)_n$ be a sequence in $\textsf{A}$ such that
for all $a\in \textsf{A}$ the series $\sum_{n=1}^\infty a^*a_na_n^*a$ converges in norm and  $\|\sum_{n=1}^\infty a^*a_na_n^*a\|\le \|a\|.$
Then the series $\sum_{n=1}^{\infty}a_na_n^*$ strictly converges to an element $f\in M(\textsf{A})$ such that $f\le e.$ Moreover, if
  $e-f\in \textsf{A},$ then there exists $b\in \textsf{A},$ $b\ge 0,$  such that the sequence
$(b+\sum_{n=1}^N a_na_n^*)_N$ is an  approximate unit for $\textsf{A}.$
\end{cor}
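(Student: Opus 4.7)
The plan is to view $\A$ as a Hilbert $\A$-module over itself and translate the hypothesis into the frame-theoretic language of Section~2. Under this identification $\Bbb K(\A)=\A$ and $\Bbb B(\A)=M(\A)$ (each acting by left multiplication), and the candidate analysis operator of $(a_n)_n$ is $U:\A\to \ell^2(\A),\,Ua=(a_n^*a)_n.$ My first step is to apply Theorem~\ref{Bessel relaxed}: since $\sum_{n=1}^\infty \la a,a_n\ra \la a_n,a\ra=\sum_{n=1}^\infty a^*a_na_n^*a$ converges in norm for every $a\in \A$ by hypothesis, $(a_n)_n$ is a Bessel sequence and $U\in \Bbb B(\A,\ell^2(\A)).$ The norm bound in the hypothesis further gives $\|U\|^2\le 1,$ i.e.\ $U^*U\le I.$

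Second, I would identify $U^*U$ with left multiplication by a single element $f\in M(\A).$ Writing $s_N=\sum_{n=1}^N a_na_n^*\in \A,$ the identity $U^*Ua=\sum_{n=1}^\infty a_na_n^*a$ yields $s_Na\to U^*Ua$ in norm for every $a\in \A;$ self-adjointness of each $s_N$ and a passage to adjoints then also gives norm convergence of $as_N.$ Hence $(s_N)_N$ is strictly Cauchy in $M(\A)$ and its strict limit $f\in M(\A)$ satisfies $U^*Ua=fa$ for all $a\in \A.$ The inequality $U^*U\le I$ in $\Bbb B(\A)$ translates to $f\le e$ in $M(\A),$ which is the first assertion.

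For the second assertion, assume $e-f\in \A$ and set $b:=e-f\ge 0.$ I would then verify that $E_N:=b+\sum_{n=1}^N a_na_n^*\in \A$ is an approximate unit for $\A.$ Each $E_N$ is positive by construction, the bound $E_N\le b+f=e$ in $M(\A)$ together with positivity gives $\|E_N\|\le 1,$ and for every $a\in \A,$
$$E_Na=ba+\sum_{n=1}^N a_na_n^*a\longrightarrow ba+fa=(b+f)a=a$$
in norm, where the convergence of the sum on the left is exactly the Bessel property obtained in step one. Thus $(E_N)_N$ is an (increasing) approximate unit for $\A$ of the form claimed.

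I expect the one point requiring care to be the strict identification in step two, specifically that norm convergence of $s_Na$ for every $a\in \A$ genuinely lifts to strict convergence of $(s_N)_N$ in $M(\A);$ as indicated this follows from self-adjointness together with the characterization of $M(\A)$ as the strict completion of $\A$ recalled in Remark~\ref{multiplier modules}. A cleaner but less self-contained route would be to apply Theorem~\ref{fin_ext_Pars_outer}: with $B\le 1$ and $e-f\in \A,$ the single vector $(e-f)^{1/2}\in \A$ furnishes a finite extension of $(a_n)_n$ to the Parseval frame $(e-f)^{1/2},a_1,a_2,\ldots$ of $\A,$ and Proposition~\ref{parseval frames are approximate units} then directly produces the required approximate unit.
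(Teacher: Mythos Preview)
Your proof is correct, and for the first assertion (strict convergence of $\sum_n a_na_n^*$ to some $f\le e$) it coincides with the paper's argument almost verbatim: both invoke Theorem~\ref{Bessel relaxed} to get adjointability of $U$, read off $U^*Ua=\sum_n a_na_n^*a$, and then use self-adjointness of the partial sums to upgrade one-sided norm convergence to strict convergence.

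For the second assertion you take a genuinely different and more elementary route than the paper. The paper first argues that $I-U^*U\in \Bbb F(\A)$ and, since $\A$ is non-unital, that $I-U^*U$ is non-invertible, forcing the optimal Bessel bound to be exactly $B=1$; it then applies Corollary~\ref{fin_ext_Pars}(a) to obtain a finite Parseval extension $(b_n)_{n=1}^M\cup(a_n)_n$, sets $b=\sum_{n=1}^M b_nb_n^*$, and finally appeals to Proposition~\ref{parseval frames are approximate units}. You instead set $b=e-f$ directly and verify the approximate unit property by hand: positivity is clear, $s_N\le f$ (hence $E_N\le e$) follows from the fact that $f-s_N$ is a strict limit of the positive partial tails, and $E_Na\to a$ is exactly the already-established convergence $s_Na\to fa$. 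Your approach avoids the detour through $B=1$ and the extension machinery of Section~6, and it produces the explicit value $b=e-f$; the paper's route, on the other hand, exhibits the corollary as an application of Corollary~\ref{fin_ext_Pars}, which is presumably its intended role in the narrative. Your closing alternative via $(e-f)^{1/2}$ and Proposition~\ref{parseval frames are approximate units} is also valid and sits between the two approaches.
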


\begin{proof}
Consider $\textsf{A}$ as a Hilbert $\textsf{A}$-module. Since $\textsf{A}$ is non-unital and $\sigma$-unital, $\textsf{A}$ is countably generated and not AFG.

First, let us prove that the series $\sum_{n=1}^\infty a_na_n^*$ is strictly convergent.
 By the first assumption, the series $\sum_{n=1}^\infty \la a,a_n\ra\la a_n,a\ra$ converges in norm for every $a\in\textsf{A}.$  By Theorem~\ref{Bessel relaxed}, $(a_n)_n$ is a Bessel sequence in $\textsf{A}.$ If $U\in \Bbb B(\A,\ell^2(\textsf{A}))$ is its analysis operator, then $U^*U\in\Bbb  B(\textsf{A})$ is given by
\begin{equation}\label{22lipnja}
U^*Ua=\sum_{n=1}^\infty a_n\la a_n,a\ra=\sum_{n=1}^\infty a_na_n^*a,\quad \forall a\in\textsf{A}.
\end{equation}
Thus, the series  $\sum_{n=1}^\infty a_na_n^*a$ converges in norm for all $a\in \textsf{A}.$ By taking adjoints, we conclude that the series $\sum_{n=1}^\infty a^*a_na_n^*$ is also norm-convergent for all $a$ in $\textsf{A}.$ In other words, there exists  $f=\mbox{(strict)} \sum_{n=1}^\infty a_na_n^*\in M(\A).$ From this we conclude that $U^*Ua=fa$ for all $a\in \textsf{A}.$

\vspace{0.1in}
Now, assuming that $e-f\in \textsf{A},$ we shall prove that $(a_n)_n$ is a Bessel sequence in $\textsf{A}$ that admits a finite extension to a Parseval frame for $\textsf{A}.$

First, recall that $\Bbb K(\textsf{A})=\Bbb F(\textsf{A}).$ Therefore, the equality
$(I-U^*U)a=(e-f)a,\,a\in \textsf{A},$ since $e-f\in \textsf{A},$ implies that $I-U^*U\in \Bbb  F(A).$

Since $\A$ is non-unital, each operator from $\Bbb F(\textsf{A})$ is non-invertible;  in particular, $I-U^*U$ is non-invertible, so  $1\in\sigma(U^*U)$ and then $\|U\|\ge 1.$ By \eqref{22lipnja} and the second assumption of the corollary, $\|U\|\le 1,$ so $\|U\|=1.$

By Corollary~\ref{fin_ext_Pars}(a), there exists a finite sequence $(b_n)_{n=1}^M$ in $\textsf{A}$ such that $(b_n)_{n=1}^M\cup(a_n)_{n=1}^\infty$ is a Parseval frame for $\textsf{A}.$ Denote $b=\sum_{n=1}^M b_nb_n^*.$ Since $\Bbb K(\A)$ and $\A$ are isomorphic as $C^*$-algebras, by Proposition~\ref{parseval frames are approximate units} the sequence $(b+\sum_{n=1}^N a_na_n^*)_N,$ as a subsequence of an approximate  unit for $\A,$ is itself an approximate unit for $\A.$
\end{proof}

\vspace{0.2in}


\end{document}